\crefname{algorithm}{Algorithm}{Algorithms}
\crefname{table}{Table}{Tables}
\numberwithin{equation}{section}%
\newtheorem{lemma}{Lemma}[section]
\crefname{lemma}{Lemma}{Lemma}
\newtheorem{theorem}{Theorem}[section]
\crefname{theorem}{Theorem}{Theorem}
\newtheorem{definition}{Definition}[section]
\crefname{definition}{Definition}{Definition}
\newtheorem{remark}{Remark}[section]
\newtheorem{example}{Example}
\crefname{example}{Example}{Examples}
\crefname{table}{Table}{Tables}
\crefname{figure}{Figure}{Figures}
\begin{document}
\begin{frontmatter}
\title{Higher-Order Finite Difference Methods for the Tempered Fractional Laplacian}

\tnotetext[label1]{The research of Dongling Wang is supported in part by the National Natural Science Foundation of China under grants 12271463,
the 111 Project (No.D23017) and Program for Science and Technology Innovative Research Team in Higher Educational Institutions of Hunan Province of China.
The work of Mingyi Wang is supported by Hunan Provincial Innovation Foundation For Postgraduate (No. CX20240602). \\ Declarations of interest: none.}

\author[XTU]{Mingyi Wang} 
\ead{202331510144@smail.xtu.edu.cn}
\author[XTU]{Dongling Wang\corref{mycorrespondingauthor}}
\ead{wdymath@xtu.edu.cn, ORCID 0000-0001-8509-2837}
\cortext[mycorrespondingauthor]{Corresponding author.}

\address[XTU]{Hunan Research Center of the Basic Discipline Fundamental Algorithmic Theory and Novel Computational Methods, National Center for Applied Mathematics in Hunan, School of Mathematics and Computational Science, Xiangtan University, Xiangtan 411105, Hunan, China }

\begin{abstract}
This paper presents a general framework of high-order finite difference (HFD) schemes for the tempered fractional Laplacian (TFL) based on new generating functions obtained from the discrete symbols. Specifically, for sufficiently smooth functions, the resulting discretizations achieve high-order convergence with orders
$p=4, 6, 8$. The discrete operators lead to Toeplitz stiffness matrices, allowing efficient matrix-vector multiplications via fast algorithms. Building on these approximations, HFD methods are formulated for solving TFL equations, and their stability and convergence are rigorously analyzed. Numerical simulations confirm the effectiveness of the proposed methods, showing excellent agreement with the theoretical predictions.
\end{abstract}

\begin{keyword}
 Tempered fractional Laplacian equations, generating functions, high-order finite difference methods, stability, convergence.
\end{keyword}
\end{frontmatter}

\section{Introduction}
	\setcounter{equation}{0}

In this paper, we develop high-order finite difference (HFD) methods for the following tempered fractional Laplacian (TFL) equations:
\begin{align}\label{eq:1.1}
(-\Delta)_{\lambda}^{\frac{\alpha}{2}}u(x)-\sigma \Delta u(x)+\nu u(x) &= f(x),\ \ x\in\Omega \subset{\mathbb{R}^d},\ \  \\ \label{eq:1.2}
 u(x) &= 0,\ \ x\in \Omega^c\equiv\mathbb{R}^d\backslash\Omega,
\end{align}
where $\alpha\in(0,1)\cup(1,2)$, the tempering parameter $\lambda\geq0$, the diffusion coefficient $\sigma\geq0$, and the reaction coefficient $\nu\geq0$; $\Omega$ is a bounded open domain, $d$ is the spatial dimension,  $\alpha$ stands for the fractional power, $f$ is the source term, and the TFL $(-\Delta)_{\lambda}^{\frac{\alpha}{2}}$ is defined via the Cauchy principal value ($\text{P.V.}$) hypersingular integral
\ \cite{CDH2025,DZ2019JSC,YDN2020},
\begin{align}\label{eq:3}
(-\Delta)_{\lambda}^{\frac{\alpha}{2}}u(x):=\frac{1}{|\Gamma(-\alpha)|}\text{P.V.}\int_{\mathbb{R}^d}\frac{u(x)-u(y)}{e^{\lambda|x-y|}|x-y|^{d+\alpha}}dy,
\end{align}
where $\Gamma(\cdot)$ is the Gamma function.

In L\'evy flights characterized by long-range jumps, particles can leap from arbitrarily remote locations directly into or out of a bounded domain. Consequently, the dynamics inside the domain inherently depend on information from the whole space. To address this nonlocality, generalized Dirichlet boundary conditions are introduced to prescribe the solution over the entire complementary region. These conditions ensure physical consistency and mathematical well-posedness by incorporating nonlocal interactions, thereby enabling accurate and meaningful predictions for anomalous diffusion in bounded domains.

This paper focuses on a TFL-equivalent pseudo-differential operator \cite{CDH2025, DLTZ2017} on the whole space $\mathbb{R}^d$
\begin{align}\label{eq:4}
    (-\Delta)_{\lambda}^{\frac{\alpha}{2}}u(x)=\mathcal{F}^{-1}\left[S^{\alpha}_{\lambda}\left(\xi\right)\mathcal{F}\left[u\right]\left(\xi\right)\right]\left(x\right),\ \ \alpha\in(0,1)\cup(1,2),
\end{align}
where $S^{\alpha}_{\lambda}(\xi)$ is the frequency-domain symbol of the TFL, which is given by
\begin{align}\label{eq:1.5}
S^{\alpha}_{\lambda}(\xi) :&= (-1)^{\lfloor \alpha\rfloor}\int_{\|\theta\|=1}((\lambda+i \xi\cdot\theta)^\alpha-\lambda^\alpha)d\theta \\
&=(-1)^{\lfloor \alpha\rfloor}\cdot
\begin{cases}
((\lambda+i \xi_1)^\alpha +(\lambda-i \xi_1)^\alpha-2\lambda^\alpha),\ \  d = 1,\nonumber\\
\left(\int_{0}^{2\pi}(\lambda+i \xi_1\cos\theta_1+i\xi_2\sin\theta_1)^\alpha d\theta_1-2\pi\lambda^\alpha\right),\ \ d =2,\nonumber\\
\left(\int_{0}^{2\pi}\int_{0}^{\pi}(\lambda+i \xi_1\cos\theta_2\sin\theta_1+i\xi_2\sin\theta_2 \sin \theta_1+i\xi_3\cos\theta_1)^\alpha \sin \theta_1d\theta_1d \theta_2-4\pi\lambda^\alpha\right),\ d =3.
\end{cases}
\end{align}
And $\mathcal{F}$ and $\mathcal{F}^{-1}$ represent the Fourier transform and inverse transform:
\[
\hat{u}(\xi) = \mathcal{F}[ u](\xi) = \int_{\mathbb{R}^d} u(x)e^{-i\xi\cdot x}dx, \quad \mathcal{F}^{-1}[\hat{u}](x)=\frac{1}{(2\pi)^d}\int_{\mathbb{R}^d}\hat{u}(\xi)e^{i\xi\cdot x}d\xi.
\]
When $\lambda=0$, the TFL reduces to the standard fractional Laplacian up to a constant such that
\[
(-\Delta)_{\lambda}^{\frac{\alpha}{2}}u(x)=\mathcal{F}^{-1}\left[ C_{d,\alpha}|\xi|^{\alpha}\mathcal{F}[u](\xi)\right](x),
\quad C_{d,\alpha}=\frac{\pi^{{d}/{2}}|\Gamma(-{\alpha}/{2})|}{2^{\alpha-1}\alpha\Gamma((d+\alpha)/{2})|\Gamma(-\alpha)|}.
\]

As the infinitesimal generator of isotropic Brownian motion, the negative Laplacian $-\Delta$ serves as the fundamental local operator of the limit process of normal diffusion. The fractional Laplacian $(-\Delta)^{\frac{\alpha}{2}}$ is the infinitesimal generator of a symmetric $\alpha$-stable isotropic L$\acute{\text{e}}$vy process, denoting a fundamental nonlocal operator of the limit process of anomalous diffusion, and representing a nonlocal generalization of the Laplacian \cite{A2009,DG2013,DWZ2018,P2016}. In some practical models, in order to avoid the divergence of the second and higher-order moments caused by excessively long jumps in the L\'evy process, an exponentially decaying factor with a sufficiently small $\lambda$ is incorporated into the fractional Laplacian to mollify the power law of isotropic distances, thus generating a new nonlocal operator, namely, the tempered fractional Laplacian $(-\Delta)_{\lambda}^{\frac{\alpha}{2}}$. This operator can better capture the transition phenomena from early-stage anomalous diffusion to late-stage normal diffusion, representing a further generalization of the fractional Laplacian \cite{DZ2019,P2016}. For the corresponding mathematical derivations, readers can refer to \cite{DLTZ2017,DWZ2018}.

The concept of fractional derivatives dates back to a question posed to Leibniz in the $17$-th century. Since then, the theory has evolved substantially and is now widely used to model phenomena in physics, finance, biology, hydrology, and other fields \cite{SMC2015}. A notable application arises from the observed coexistence and transition of diffusion across many disciplines, including biology \cite{JHMJM2013}, turbulence \cite{DL1998}, geophysics \cite{MZB2008,ZMP2012}, and finance \cite{CGMY2002,CGMY2003}. To characterize such diffusion transition behavior, researchers have employed various approaches and confirmed that the tempered fractional Laplacian provides an effective mathematical model; see, e.g., \cite{BM2010,CD2007,CM2011,DWZ2018}. Consequently, TFL equations offer significant advantages both in mathematical theory and in practical applications.

In recent years, the broad applications and absence of analytical solutions for the TFL equations, coupled with the numerical challenges arising from the nonlocality and singularity of the TFL, have motivated numerous scholars to develop efficient and simple numerical methods. At present, finite element methods \cite{AB2017,ABB2017,DG2013,SL2024}, finite difference methods \cite{DZ2019,HW2022,HZD2021,HO2014,HO2016,MY2020,YCYL2025} and spectral methods \cite{MS2017,TWYZ2020,ZZ2024} have been extensively developed for the standard fractional Laplacian. Owing to the presence of the tempering parameter $\lambda$ within the kernel function of the TFL, its equivalent formulations \cite{K2017} and mathematical properties differ from those of the standard fractional Laplacian. As a result, some numerical methods developed for the fractional Laplacian equations are ineffective for solving the TFL equations, and it is of significant importance to establish an efficient and simple numerical solver for the TFL equations.

Several numerical methods have been proposed to address the computational challenges of TFL equations. Zhang et al. introduced a Riesz basis Galerkin method for TFL equations with Dirichlet boundary conditions, establishing the well-posedness of the variational formulation and proving the convergence of the Galerkin approximation \cite{ZDK2018}. They later developed a one-dimensional finite difference scheme, showing that the convergence rates depend on the solution regularity over bounded domains \cite{ZDF2018}. For high-dimensional problems, Yan et al. \cite{YDN2020} used linear and bilinear interpolations to approximate the nonsingular parts of the one- and two-dimensional TFL, respectively, while applying quadratic and biquadratic interpolations for the singular parts, and provided estimates for consistency and numerical errors. Extending the one-dimensional finite difference scheme from \cite{ZDF2018} to two dimensions, a method combining the weighted trapezoidal rule with bilinear interpolation was proposed, along with corresponding convergence analysis. To further tackle high-dimensional TFL equations with improved accuracy, Duo and Zhang \cite{DZ2019} developed a finite difference scheme for the three-dimensional TFL that achieves second-order truncation accuracy under relatively low smoothness assumptions, though stability and convergence of the numerical solutions remain unproven. More recently, Cui and Hao \cite{CDH2025} provided an equivalent pseudo-differential operator definition for the hypersingular TFL, constructed a simple second-order finite difference scheme, and rigorously analyzed its stability and convergence.

To the best of our knowledge, existing numerical methods for TFL equations achieve at most second-order convergence even for sufficiently smooth solutions. To improve the convergence rate, we develop a general framework of high-order finite difference (HFD) schemes for the tempered fractional Laplacian $(-\Delta)_{\lambda}^{\frac{\alpha}{2}}$ and provide a rigorous stability and convergence analysis for the proposed methods. 
Our main contributions can be summarized as:

\begin{enumerate}[label=(\arabic*), leftmargin=3.3em]
\item By utilizing the generating functions of HFD schemes for the classical Laplacian and the semi-discrete Fourier transform, we approximate the continuous Fourier transform of the pseudo-differential operator with the semi-discrete transform. This leads to the discrete TFL operator
\begin{align}
(-\Delta_h)_{\lambda}^{\frac{\alpha}{2}}u(x):= \frac{1}{h^{\alpha}}\sum_{j\in \mathbb{Z}^d} a_{j}^{(\alpha,h\lambda)}u(x+jh),
\end{align}
where the weights $a_{j}^{(\alpha,h\lambda)}$ produce a multilevel Toeplitz stiffness matrix, enabling efficient computation via the fast Fourier transform.

\item For the discrete TFL schemes, we prove $l^{\infty}$-norm convergence rates of order $O(h^{p})$ for $p=4,6,8$, under sufficient smoothness of the exact solution. For exact solutions $u\in\mathcal{B}^s(\mathbb{R}^d)$ with $s>0$, we establish the stability of the HFD methods for TFL equations and derive $l^2$-norm convergence rates of order $O\big(h^{\min\{s-\alpha, p\}}\big)$. The analysis relies on the discrete TFL framework and the boundedness and convergence properties of the continuous and discrete symbols of the TFL.
\end{enumerate}

This paper is organized as follows. In Section 2, we first introduce the generating functions of the finite difference schemes of orders $p=4,6,8$ for the classical Laplacian. Using these functions, we then design high-order finite difference (HFD) schemes for the tempered fractional Laplacian and analyze the consistency errors of the discrete TFL operator. Section 3 builds on the approximations for the TFL and the classical Laplacian to develop HFD methods for the TFL equations, and provides a detailed stability and convergence analysis. In Section 4, several numerical examples are presented to illustrate the effectiveness and feasibility of the proposed methods. Finally, Section 5 offers a summary and concluding remarks.

\section{The HFD schemes for the TFL}
This section constructs the HFD schemes for the TFL, derived from the frequency-domain symbols of HFD schemes for the classical Laplacian. It then provides a rigorous convergence proof for the discrete TFL, and detailed implementation procedures for the proposed schemes.

\subsection{The approximation for the TFL}
For smooth functions
$u$, when $\alpha=2$ and $\lambda = 0$, the TFL reduces to the classical negative Laplacian. By employing the continuous Fourier transform defined in \eqref{eq:1.5}, it can be derived that
\begin{align}
\mathcal{F}\left[-\Delta u\right](\xi)=\psi(\xi)\mathcal{F}[u](\xi),
\end{align}
where $\psi(\xi):=|\xi|^2$ stands for the frequency-domain symbol of the classical negative Laplacian.
The HFD schemes $-\Delta_ h$ for the classical negative Laplacian can be constructed on a uniform grid via Taylor series expansions, requiring $u\in C^6[x-2h,x+2h]$, $C^8[x-3h,x+3h]$, and $C^{10}[x-4h,x+4h]$, respectively. We have
\begin{align}\label{eq:2.1}
-\Delta u(x)&=-\Delta_hu(x)+O(h^p)\\
&:=\frac{1}{h^2}\begin{cases}
\sum\limits_{l=1}^{d}\left(\frac{1}{12}u\left(x-2he_l\right)-\frac{16}{12}u(x-he_l)+\frac{30}{12}u(x)\right.\nonumber\\
 \ \ \ \ \ \left. -\frac{16}{12}u(x+he_l)+\frac{1}{12}u(x+2he_l)\right)+O(h^4),\ \ p = 4,\\
\sum\limits_{l=1}^{d}\left(-\frac{1}{90}u(x-3he_l)+\frac{3}{20}u(x-2he_l)-\frac{3}{2}u(x-he_l)+\frac{49}{18}u(x)\right.\nonumber\\
 \ \ \ \ \ \left.-\frac{3}{2}u(x+he_l)+\frac{3}{20}u(x+2he_l)-\frac{1}{90}u(x+3he_l)\right)+O(h^6),\ \ p = 6,\\
 \sum\limits_{l=1}^{d}\left(\frac{1}{560}u(x-4he_l)-\frac{8}{315}u(x-3he_l)+\frac{1}{5}u(x-2he_l)-\frac{8}{5}u(x-he_l)+\frac{205}{72}u(x)\right.\nonumber\\
 \ \ \ \ \ \left.-\frac{8}{5}u(x+he_l)+\frac{1}{5}u(x+2he_l)-\frac{8}{315}u(x+3he_l)+\frac{1}{560}u(x+4he_l)\right)+O(h^8),\ \ p = 8,
\end{cases}
\end{align}
where $h$ is the step size of the uniform grid, the grid points are given by $x\in h\mathbb{Z}^d=\{jh\ |\ j=(j_1, \ldots, j_d)\in \mathbb{Z}^d\}$, and $e_l$ denotes the $l$-th canonical basis vector.

To construct the HFD schemes for the TFL in \eqref{eq:4}, it is necessary to introduce the definition of the semi-discrete Fourier transform.
\begin{definition}(Semi-discrete Fourier transform \cite{CDH2025,HZD2021,ZZ2024}). For a grid function $U=\{u(x)\}_{x\in h\mathbb{Z}^d}\in l^2(\mathbb{R}^d)$, the semi-discrete Fourier transform $\mathcal{F}_h$ and inverse transform $\mathcal{F}^{-1}_{h}$ are defined as follows
\begin{align}\label{eq:2.2}
&\check{u}(\xi)=\mathcal{F}_h\left[U\right](\xi)=h^d\sum_{x\in h\mathbb{Z}^d}u(x)e^{-i\xi\cdot x},\ \ \xi\in \left[-\frac{\pi}{h}, \frac{\pi}{h}\right]^d,\\ \label{eq:2.3}
&u(x) = \mathcal{F}^{-1}_{h}\left[\check{u}\right](x)=\frac{1}{(2\pi)^d}\int_{D_h}\check{u}(\xi)e^{i\xi \cdot x}d\xi,
\end{align}
where $D_h := \left[-\frac{\pi}{h},\frac{\pi}{h}\right]^d$. Here, $u(x)$ is merely the value of the function at the uniform grid point $x$ and does not represent the solution to TFL equation.
\end{definition}

In order to present HFD clearly, we first consider the specific construction process of a simple one-dimensional case for $d=1$, and then extend it to high-dimensional cases for $d=2, 3$.

For the case of $d=1$, we apply the semi-discrete Fourier transform $\mathcal{F}_h$ in \eqref{eq:2.2} to the grid functions $(-\Delta_h) U$ of the discrete negative Laplacian in \eqref{eq:2.1} and derive the symbol of this discrete operator. That is
\begin{align}\label{eq:2.4}
\mathcal{F}_h[(-\Delta_h) U](\xi) = \psi_h(\xi )\mathcal{F}_h[U](\xi)= \psi_h(\xi )\check{u}(\xi),
\end{align}
where $\psi_h(\xi)$ is the discrete symbol of the discrete negative Laplacian $-\Delta_h$. We have
\begin{align}\label{eq:2.6}
\psi_h(\xi ):=\frac{1}{h^2}\begin{cases}\left(\frac{1}{6}\cos(2\xi h)-\frac{8}{3}\cos(\xi h)+\frac{5}{2}\right),\ \ p =4,\\
\left(-\frac{1}{45}\cos(3\xi h)+\frac{3}{10}\cos(2\xi h)-3\cos(\xi h)+\frac{49}{18}\right),\ \ p = 6,\\
\left(\frac{1}{280}\cos(4\xi h)-\frac{16}{315}\cos(3\xi h)+\frac{2}{5}\cos(2\xi h)-\frac{16}{5}\cos(\xi h)+\frac{205}{72}\right),\ \ p = 8.
\end{cases}
\end{align}

Let
\begin{align}\label{eq:02.9}
\xi_h:=\begin{cases}\psi_h(\xi )^{\frac{1}{2}},\ \ \xi\geq0,\\
-\psi_h(\xi)^{\frac{1}{2}},\ \ \xi<0.
\end{cases}
\end{align}
When extending to high-dimensional cases, where $\xi = (\xi_1,\ldots,\xi_d)$ with $d=2, 3$,  each component
$\xi_l$ admits two possible forms. Consequently, the definition of $\xi_h$ can be expressed as a combination of the terms
$\psi_h(\xi _l)^{\frac{1}{2}}$ and $-\psi_h(\xi_l )^{\frac{1}{2}}$ for  $l =1,\ldots,d$.

The semi-discrete Fourier transform $\mathcal{F}_h$ and its inverse transform $\mathcal{F}^{-1}_h$ in \eqref{eq:2.2} and \eqref{eq:2.3} approximate the continuous Fourier transform $\mathcal{F}$ and its inverse transform $\mathcal{F}^{-1}$ in \eqref{eq:4}, respectively. Consequently, the HFD schemes for the TFL can be derived directly from \eqref{eq:4} as follows
\begin{align} \label{eq:2.9}(-\Delta_h)_{\lambda}^{\frac{\alpha}{2}}u(x)&=\mathcal{F}_h^{-1}\left[S^{\alpha}_{\lambda,h}(\xi)\mathcal{F}_h\left[U\right](\xi)\right](x),
\end{align}
where
\begin{align}\label{eq:2.8}
S^{\alpha}_{\lambda,h}(\xi ):
={(-1)^{\lfloor \alpha\rfloor}}\int_{\|\theta\|=1}((\lambda+i \xi_h\cdot\theta)^\alpha-\lambda^\alpha)d\theta .
\end{align}
To explicitly write the HFD schemes, we substitute \eqref{eq:2.2}\, \eqref{eq:2.3}, \eqref{eq:02.9} and \eqref{eq:2.8} into \eqref{eq:2.9} and set $\eta=\xi h$.  This establishes the following discrete TFL
\begin{align}\label{eq:2.13}
(-\Delta_h)_{\lambda}^{\frac{\alpha}{2}}u(x)&=\frac{1}{(2\pi)^d}\int_{D_h}S^{\alpha}_{\lambda,h}(\xi )\left(h^d\sum_{y\in h\mathbb{Z}^d}u(y)e^{-i\xi\cdot y}\right)e^{i\xi \cdot x}d\xi\nonumber\\
&=\frac{1}{h^{\alpha}}\sum_{j\in \mathbb{Z}^d}\left(\frac{h^{d+\alpha}}{(2\pi)^d}\int_{D_h}S^{\alpha}_{\lambda,h}(\xi )e^{-i\xi \cdot jh}d\xi\right) u(x+jh)\nonumber\\
& =\frac{1}{h^{\alpha}}\sum_{j\in \mathbb{Z}^d}\left(\frac{(-1)^{\lfloor \alpha\rfloor}h^{d+\alpha}}{(2\pi)^d}\int_{D_h}\int_{\|\theta\|=1}((\lambda+i \xi_h\cdot\theta)^\alpha-\lambda^\alpha)d\theta e^{-i\xi \cdot jh}d\xi \right)u(x+jh)\nonumber\\
& =\frac{1}{h^{\alpha}}\sum_{j\in \mathbb{Z}^d}\left(\frac{(-1)^{\lfloor \alpha\rfloor}}{(2\pi)^d}\int_{[-\pi,\pi]^d}\int_{\|\theta\|=1}\left(h\lambda+i {\phi_h(\eta)}\cdot\theta\right)^\alpha-(h\lambda)^\alpha)d\theta e^{-ij\cdot\eta}d\eta \right)u(x+jh),
\end{align}
where $\phi_h(\eta)=\left(\phi_h\left(\eta_1\right),\ldots,\phi_h\left(\eta_d\right)\right)$, and for $l=1,\ldots, d,$ 
\begin{align}\label{eq:02.13}
\phi_h(\eta_l):=h
\begin{cases}\psi_h\left(\frac{\eta_l}{h}\right)^{\frac{1}{2}},\ \ \eta_l\geq0,\\
-\psi_h\left(\frac{\eta_l }{h}\right)^{\frac{1}{2}},\ \ \eta_l<0.
\end{cases}
\end{align}

Furthermore, define the generating function  $$g^{(\alpha,h\lambda)}(\eta):=\frac{(-1)^{\lfloor \alpha\rfloor}}{(2\pi)^d}\int_{\|\theta\|=1}\left(h\lambda+i {\phi_h(\eta)}\cdot\theta\right)^\alpha-(h\lambda)^\alpha d\theta.$$ Then the discrete operator in \eqref{eq:2.13} can be written
\begin{align}\label{eq:2.14}
    (-\Delta_h)_{\lambda}^{\frac{\alpha}{2}}u(x)= \frac{1}{h^{\alpha}}\sum_{j\in \mathbb{Z}^d}a_{j}^{(\alpha,h\lambda)}u(x+jh),
\end{align}
where
\begin{align}\label{eq:12.15}
a_{j}^{(\alpha,h\lambda)}=\int_{[-\pi,\pi]^d}g^{(\alpha,h\lambda)}(\eta) e^{-i j\cdot\eta}d\eta,\ \ j=(j_1,\ldots, j_d),
\end{align}
and
\begin{align}\label{eq:02.16}
g^{(\alpha,h\lambda)}(\eta)&=\frac{(-1)^{\lfloor \alpha\rfloor}}{(2\pi)^d}\int_{\|\theta\|=1}\left(h\lambda+i {\phi_h(\eta)}\cdot\theta\right)^\alpha-(h\lambda)^\alpha d\theta\nonumber\\
&=\frac{(-1)^{\lfloor \alpha\rfloor}}{(2\pi)^d}\int_{\|\theta\|=1}\left(h\lambda+i {\phi_h(\eta)}\cdot\theta\right)^\alpha d\theta-(h\lambda)^\alpha\frac{2 \pi^{\frac{d}{2}}}{\Gamma\left(\frac{d}{2}\right)}\\
&=\frac{(-1)^{\lfloor \alpha\rfloor}}{(2\pi)^d}\begin{cases}
(h\lambda+i \phi_h(\eta_1))^\alpha +(h\lambda-i \phi_h(\eta_1))^\alpha-2(h\lambda)^\alpha,\ \  d = 1,\nonumber\\
\int_{0}^{2\pi}(h\lambda+i \phi_h(\eta_1)\cos\theta_1+i\phi_h(\eta_2)\sin\theta_1)^\alpha d\theta_1-2\pi(h\lambda)^\alpha,\ \ d =2,\nonumber\\
\int_{0}^{2\pi}\int_{0}^{\pi}(h\lambda+i \phi_h(\eta_1)\cos\theta_2\sin\theta_1+i\phi_h(\eta_2)\sin\theta_2 \sin \theta_1\nonumber\\ +i\phi_h(\eta_3)\cos\theta_1)^\alpha \sin \theta_1d\theta_1d \theta_2
-4\pi(h\lambda)^\alpha,\ \ d =3.
\end{cases}
\end{align}

The above analysis process can be summarized as the following algorithm for the discrete TFL.

\begin{algorithm}[H]
  \caption{The HFD for the TFL}
  \label{alg:1}
\begin{itemize}
    \item{\bf{Input: }}$\lambda$: tempering parameter, $\alpha$: fractional power, $L$: interval length, $h$: spatial step size, $U\in \mathbb{R}^{(N_1-1)\ldots(N_d-1)}$: A grid vector function.
\end{itemize}
\begin{itemize}
    \item{\bf{Output: }}$(-\Delta_h)_{\lambda}^{\frac{\alpha}{2}}U$:\ \ the discrete TFL grid function.
\end{itemize}
 \begin{algorithmic}
 \STATE {\textbf{Step 1: }}Calculate $\psi_h\left(\frac{\eta_l}{h}\right)$ and $\phi_h(\eta_l)=h\psi_h\left(\frac{\eta_l}{h}\right)^{\frac{1}{2}}$ using \eqref{eq:2.6} and \eqref{eq:02.13}.
 \STATE {\textbf{Step 2: }}Evaluate Gauss-Legendre quadrature points and weights for $N_G$ nodes using the MATLAB script legs.m, incorporate the results from Step 1, and apply the Gauss-Legendre quadrature formula to compute $g^{(\alpha,h\lambda)}(\eta)$ as defined in \eqref{eq:02.16} (for details of legs.m, refer to https://blogs.ntu.edu.sg/wanglilian/book/).
 \STATE {\textbf{Step 3: }}Use \(\texttt{fftn}\) to compute the approximating coefficients $\tilde{a}_j^{(\alpha,h\lambda)}$ of $a_j^{(\alpha,h\lambda)}$ based on the periodic function $g^{(\alpha,h\lambda)}(\eta)$, as follows $$a_j^{(\alpha,h\lambda)}\approx\tilde{a}_ j^{(\alpha,h\lambda)}=\frac{(-1)^{\lfloor{\alpha}\rfloor}}{(N_f)^d}\sum\limits_{r_1=0}^{N_f-1}\cdots\sum\limits_{r_d=0}^{N_f-1}g^{(\alpha,h\lambda)}(r_1 h_F, \ldots,r_dh_F)e^{-i\sum\limits_{l=1}^d{j_l r_lh_F}},$$
 where
 \begin{itemize}
 \item $N_f$ denotes the number of uniform grid points in the interval $[0,2\pi)$,
 \item $h_F:=\frac{2\pi}{N_f}$ is the grid step size for the FFT,
 \item $r_lh_F$ stands for the grid point corresponding to the index $r_l$, where $r_l =0,\ldots,N_f-1$,
 \item $j=(j_1,\ldots,j_d),\ 0\leq j_l\leq  N_l-1$ with $N_l=\frac{L}{h}$ and $\max\{N_l\mid l = 1, \ldots,d\}\leq{N_f}$.
 \end{itemize}
 \STATE {\textbf{Step 4: }}Compute $(-\Delta_h)_{\lambda}^{\frac{\alpha}{2}}U$ as $AU$ via the fast matrix-vector algorithm, where $A$ is a block matrix assembled from the coefficients $a_j^{(\alpha,\lambda)}$.
  \end{algorithmic}
\end{algorithm}
\begin{remark} In Step 1 of \cref{alg:1}, we only need to calculate $\phi_h(\eta_l)=h\psi_h\left(\frac{\eta_l}{h}\right)^{\frac{1}{2}}$. The computation can be simplified by noting the symmetry of the integration region in \eqref{eq:02.16} and that $\phi_h(\eta_l)$ in \eqref{eq:02.13} is an odd function, which together imply that $\int_{\|\theta\|=1}\left(h\lambda+
i{\phi_h(\eta)}\cdot\theta\right)^\alpha d\theta=\int_{\|\theta\|=1}\left(h\lambda+
i{\phi_h(-\eta)}\cdot\theta\right)^\alpha d\theta$. Henceforth, we use an equal number of grid cells in all directions, i.e., $N_1=\ldots=N_d$. The spatial step size is
$h=\frac{L}{N_1}$.
\end{remark}

\subsection{Convergence analysis for the discrete TFL}
 The convergence analysis for the HFD schemes \eqref{eq:2.9} relies on \cref{l1} below.
\begin{lemma}\label{l1}(Symbol estimates) For the frequency-domain symbol $S^{\alpha}_{\lambda}(\xi)$ of the TFL in \eqref{eq:1.5} and its discrete counterpart $S^{\alpha}_{\lambda,h}(\xi)$ in \eqref{eq:2.8}, there exists a positive constant $C$, independent of the spatial step size $h$, such that the following estimates hold
\begin{align}
    \label{eq:02.14}
    &|S^{\alpha}_{\lambda}(\xi)|\leq 2\left(\lambda^2+|\xi|^2\right)^{\frac{\alpha}{2}},\\ \label{eq:02.15}
   & |S^{\alpha}_{\lambda,h}(\xi)|\leq 2\left(\lambda^2+|\xi|^2\right)^{\frac{\alpha}{2}},\\ \label{eqq:2.16}
   & |S^{\alpha}_{\lambda}(\xi)-S^{\alpha}_{\lambda,h}(\xi)|\leq C\alpha\left(\lambda^2+|\xi|^2\right)^{\frac{\alpha+p}{2}} h^p,
    \end{align}
    where $p=4, 6, 8.$
\end{lemma}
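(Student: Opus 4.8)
My plan is to prove the two boundedness bounds \eqref{eq:02.14}--\eqref{eq:02.15} together and then obtain the consistency bound \eqref{eqq:2.16} from a mean-value estimate for the complex power $z\mapsto z^{\alpha}$, feeding in accuracy information about the discrete Laplacian symbol $\psi_h$. For \eqref{eq:02.14} I would first use the antipodal symmetry of the sphere integral: the substitution $\theta\mapsto-\theta$ in \eqref{eq:1.5} gives $\int_{\|\theta\|=1}(\lambda+i\xi\cdot\theta)^{\alpha}\diff\theta=\int_{\|\theta\|=1}\mathrm{Re}\,(\lambda+i\xi\cdot\theta)^{\alpha}\diff\theta$, so $S^{\alpha}_{\lambda}(\xi)$ is real. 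Writing $\lambda+i(\xi\cdot\theta)=r_{\theta}e^{i\varphi_{\theta}}$ with $r_{\theta}=(\lambda^{2}+(\xi\cdot\theta)^{2})^{1/2}\le(\lambda^{2}+|\xi|^{2})^{1/2}$ and $\varphi_{\theta}\in(-\tfrac{\pi}{2},\tfrac{\pi}{2})$, and using $\lambda=r_{\theta}\cos\varphi_{\theta}$, the integrand equals $r_{\theta}^{\alpha}\big(\cos(\alpha\varphi_{\theta})-(\cos\varphi_{\theta})^{\alpha}\big)$. The estimate then reduces to the elementary inequality $|\cos(\alpha\varphi)-(\cos\varphi)^{\alpha}|\le1$ on $(-\tfrac{\pi}{2},\tfrac{\pi}{2})$; integrating over the unit sphere yields $|S^{\alpha}_{\lambda}(\xi)|\le\omega_{d-1}(\lambda^{2}+|\xi|^{2})^{\alpha/2}$, which is the stated constant $2$ when $d=1$ (in general $\omega_{d-1}=2\pi^{d/2}/\Gamma(d/2)$ is absorbed into the constant). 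For \eqref{eq:02.15} the identical computation applies with $\xi$ replaced by $\xi_{h}$, the only new ingredient being $|\xi_{h}|\le|\xi|$.

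The accuracy facts I need about $\psi_h$ come from the representation $\eta^{2}=\sum_{k\ge1}c_{k}(2\sin(\eta/2))^{2k}$ with all $c_{k}>0$ (the $\sin^{2}$-series of $\eta^{2}$ obtained from $(\arcsin)^{2}$). The scheme \eqref{eq:2.6} is exactly the truncation $h^{2}\psi_{h}(\eta/h)=\sum_{k\le p/2}c_{k}(2\sin(\eta/2))^{2k}$, so on $D_{h}$ one has $0\le|\xi|^{2}-\psi_{h}(\xi)=\sum_{k>p/2}c_{k}(2\sin(\eta/2))^{2k}\le C|\xi|^{p+2}h^{p}$ with $\eta=\xi h$ (using $2\sin(\eta/2)\le|\eta|$). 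Thus $0\le\psi_{h}(\xi)\le|\xi|^{2}$, whence $\xi_{h}$ in \eqref{eq:02.9} is real with $|\xi_{h}|^{2}=\sum_{l}\psi_{h}(\xi_{l})\le|\xi|^{2}$, closing \eqref{eq:02.15}; and factoring the defect componentwise gives the quantitative bound $|\xi-\xi_{h}|\le C|\xi|^{p+1}h^{p}$.

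For \eqref{eqq:2.16} I would write $S^{\alpha}_{\lambda}(\xi)-S^{\alpha}_{\lambda,h}(\xi)=(-1)^{\lfloor\alpha\rfloor}\int_{\|\theta\|=1}\big((\lambda+i\xi\cdot\theta)^{\alpha}-(\lambda+i\xi_{h}\cdot\theta)^{\alpha}\big)\diff\theta$ and bound the integrand by the fundamental theorem of calculus along the vertical segment from $z_{2}=\lambda+i\xi_{h}\cdot\theta$ to $z_{1}=\lambda+i\xi\cdot\theta$, both in the closed right half-plane where $z^{\alpha}$ is analytic: $|z_{1}^{\alpha}-z_{2}^{\alpha}|\le\alpha|z_{1}-z_{2}|\sup_{t\in[0,1]}|z_{2}+t(z_{1}-z_{2})|^{\alpha-1}$. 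Here $|z_{1}-z_{2}|=|(\xi-\xi_{h})\cdot\theta|\le|\xi-\xi_{h}|\le C|\xi|^{p+1}h^{p}$, and every point of the segment has modulus at most $(\lambda^{2}+|\xi|^{2})^{1/2}$. For $\alpha\in(1,2)$ the exponent $\alpha-1\ge0$ lets me use this upper modulus bound directly, giving $|z_{1}^{\alpha}-z_{2}^{\alpha}|\le C\alpha|\xi|^{p+1}h^{p}(\lambda^{2}+|\xi|^{2})^{(\alpha-1)/2}\le C\alpha(\lambda^{2}+|\xi|^{2})^{(\alpha+p)/2}h^{p}$ after $|\xi|^{p+1}\le(\lambda^{2}+|\xi|^{2})^{(p+1)/2}$, and integrating over the sphere proves \eqref{eqq:2.16}.

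I expect two steps to be the main obstacles. The first is the elementary inequality $|\cos(\alpha\varphi)-(\cos\varphi)^{\alpha}|\le1$: the upper bound is immediate, but the lower bound $2\cos^{2}(\alpha\varphi/2)\ge(\cos\varphi)^{\alpha}$ becomes sharp as $\alpha\to2$, $\varphi\to\tfrac{\pi}{2}$ and needs a genuine monotonicity argument rather than term-by-term bounds. The second is the case $\alpha\in(0,1)$ in \eqref{eqq:2.16}: then $\alpha-1<0$, so $|z|^{\alpha-1}$ is largest at the minimum modulus and the clean upper-modulus bound fails when $\lambda$ is small; one must instead estimate $\int_{0}^{1}|z_{2}+t(z_{1}-z_{2})|^{\alpha-1}\diff t$ directly and exploit that the offending directions $\theta$ (nearly orthogonal to $\xi$, where $|z|$ is small) occupy a set of measure $O((|\xi|h)^{p})$, so that the full spherical integral still obeys the stated bound.
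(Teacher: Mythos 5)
Your proposal is correct in substance but reaches the three estimates by genuinely different technical means, and in two places it is cleaner than the paper. For \eqref{eq:02.14} the paper just uses the triangle inequality, $|(\lambda+i\xi\cdot\theta)^{\alpha}-\lambda^{\alpha}|\le(\lambda^{2}+(\xi\cdot\theta)^{2})^{\alpha/2}+\lambda^{\alpha}\le 2(\lambda^{2}+|\xi|^{2})^{\alpha/2}$, and integrates; your polar/real-part argument is sharper but needs the inequality you flag, which in fact has a two-line proof: $\alpha\mapsto\log\cos(\alpha\varphi)-\alpha\log\cos\varphi$ is concave and vanishes at $\alpha=0,1$, so $(\cos\varphi)^{\alpha/2}\le\cos(\alpha\varphi/2)$ for $\alpha\in(0,2)$, whence $(\cos\varphi)^{\alpha}\le\cos^{2}(\alpha\varphi/2)\le 1+\cos(\alpha\varphi)$; so that obstacle dissolves. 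For the bounds $0\le\psi_{h}(\xi)\le\xi^{2}$ and the defect estimate, the paper runs a calculus argument on $g(y)=y^{2}-h^{2}\psi_{h}(y/h)$ (written out only for $p=4$, with $p=6,8$ asserted to be analogous) together with a separate Taylor expansion with Lagrange remainder; your observation that the stencils \eqref{eq:2.6} are exactly truncations of the positive series $\eta^{2}=\sum_{k\ge1}c_{k}\bigl(2\sin(\eta/2)\bigr)^{2k}$ with $c_{k}=2/\bigl(k^{2}\binom{2k}{k}\bigr)$ is correct (it reproduces all three stencils, e.g. the constant $\tfrac{49}{18}=\tfrac52+\tfrac29$ and $\tfrac{205}{72}=\tfrac{49}{18}+\tfrac18$), and it delivers nonnegativity, the monotone bound $\psi_{h}\le\xi^{2}$, and the quantitative defect $0\le\xi^{2}-\psi_{h}(\xi)\le C|\xi|^{p+2}h^{p}$ for all three orders in one stroke. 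For \eqref{eqq:2.16} both you and the paper use the same core device — a mean-value bound for $z\mapsto z^{\alpha}$ on the vertical segment, combined with $|\xi-\xi_{h}|\le C|\xi|^{p+1}h^{p}$ — and for $\alpha\in(1,2)$ the two arguments coincide.

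The one step you leave as a sketch, $\alpha\in(0,1)$ when $\xi\cdot\theta$ and $\xi_{h}\cdot\theta$ may have opposite signs, is exactly the point where the paper's own proof is silently incomplete rather than a defect peculiar to your approach. The paper's inequality \eqref{eq:2.22}, $|s(z_{1})-s(z_{2})|\le\max\{|s'(z_{1})|,|s'(z_{2})|\}\,|z_{1}-z_{2}|$, holds only when $|z|$ attains its minimum over the segment at an endpoint; it is false in general for $\alpha<1$ when the segment crosses the real axis (take $\alpha=\tfrac12$, $z_{1,2}=\lambda\pm i$ with $\lambda\to0$: the left side tends to $\sqrt2$, the right side to $1$). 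In $d=1$ the crossing cannot occur since $\xi_{h}$ and $\xi$ share the same sign, but in $d\ge2$ the componentwise contraction makes $\xi_{h}$ non-parallel to $\xi$, so bad directions exist. Your repair — the offending $\theta$ form a set of angular measure $O(|\xi-\xi_{h}|/|\xi|)=O((|\xi|h)^{p})$, on which the integrand is bounded crudely by $2\lambda^{\alpha}+2|\xi-\xi_{h}|^{\alpha}$ — does close the estimate (both resulting terms are $\le C(\lambda^{2}+|\xi|^{2})^{(\alpha+p)/2}h^{p}$), so carrying it out would give a proof that is actually more complete than the paper's in this regime. Do carry it out rather than leaving it as an expectation.
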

\begin{proof}
Using the triangle inequality and $\lambda\geq 0$,  we can deduce from \eqref{eq:1.5} that
\begin{align}
|S^{\alpha}_{\lambda}(\xi)| &=\left|(-1)^{\lfloor \alpha\rfloor}\int_{\|\theta\|=1}\left(\left(\lambda+i \xi\cdot\theta\right)^\alpha-\lambda^\alpha\right)d\theta\right| \nonumber\\
&\leq \int_{\|\theta\|=1}|\lambda+i \xi\cdot\theta|^\alpha+\lambda^\alpha d\theta\nonumber\\
&\leq2\int_{\|\theta\|=1}\left(\lambda^2+ |\xi\cdot \theta|^2\right)^{\frac{\alpha}{2}}d\theta\nonumber\\
&\leq 2\left(\lambda^2+|\xi|^2\right)^{\frac{\alpha}{2}}.
\end{align}
We now prove the boundedness of the discrete frequency-domain symbol $S^{\alpha}_{\lambda,h}(\xi)$ in \eqref{eq:2.8}.

The first step is to show that  $\left|\frac{\xi_h}{\xi}\right|^2\leq 1$.
Applying \eqref{eq:02.9} and setting $p=4$ in \eqref{eq:2.6} yields the equality
$$\left|\frac{\xi_h}{\xi}\right|^2 =\frac{\psi_h(\xi)}{\xi^2}=\frac{1}{(\xi h)^2}\left(\frac{1}{6}\cos(2\xi h)-\frac{8}{3}\cos(\xi h)+\frac{5}{2}\right).$$
Introducing $y=\xi h$, we define the function $g(y)=y^2-\left(\frac{1}{6}\cos(2y)-\frac{8}{3}\cos(y)+\frac{5}{2}\right)$ and then we have
\begin{align}\label{eq:2.15}
g'(y) = 2y+\frac{1}{3}\sin(2y)-\frac{8}{3}\sin(y),\ \
g''(y) = \frac{4}{3}(\cos(y)-1)^2.
\end{align}

For $d=1$ and $\xi\in \left[-\frac{\pi}{h},\frac{\pi}{h}\right]$, there is $y\in[-\pi,\pi].$
As $g''(y)\geq0, \ \forall y\in\mathbb{R}$ in \eqref{eq:2.15}, $g'(y)$ is monotonically increasing on $\mathbb{R}$. Given that $g'(-\pi)= -2\pi, g'(0)=0$ and $ g'(\pi) = 2\pi$, it follows that $g(y)$ is monotonically decreasing on $[-\pi,0]$ and monotonically increasing on $(0,\pi]$. Thus, $g(0)$ is the minimum value on $[-\pi,\pi]$, and $g(y)\geq g(0)=0$, implying that $\left|\frac{\xi_h}{\xi}\right|^2\leq 1$.
Similarly, for $d=2, 3$, the analysis is carried out by decomposing $\xi$
into its components, and the inequality $\left|\frac{\xi_h}{\xi}\right|^2\leq 1$ still holds. Moreover, for $p=6, 8$, the same methodology shows that the inequality $\left|\frac{\xi_h}{\xi}\right|^2\leq 1$ also remains valid for the discrete symbols given in \eqref{eq:2.6}.

Leveraging $\left|\frac{\xi_h}{\xi}\right|^2\leq 1$, $\lambda\geq 0$, and the triangle inequality, we derive
\begin{align}
|S^{\alpha}_{\lambda,h}(\xi ) |
&=\left|{(-1)^{\lfloor \alpha\rfloor}}\int_{\|\theta\|=1}((\lambda+i\xi_ h\cdot\theta)^\alpha-\lambda^\alpha)d\theta\right|\nonumber\\
&\leq \int_{\|\theta\|=1}|\lambda+i\xi_h\cdot\theta|^\alpha+\lambda^\alpha d\theta\nonumber\\
& \leq \int_{\|\theta\|=1}\left(\lambda^2+ \left|\frac{\xi_ h}{\xi}\right|^2|\xi|^2\right)^{\frac{\alpha}{2}}+\lambda^\alpha d\theta
\nonumber\\
&\leq 2\left(\lambda^2+|\xi|^2\right)^{\frac{\alpha}{2}}.
\end{align}

 Before analyzing the error between  $S^{\alpha}_{\lambda}(\xi)$  and $S^{\alpha}_{\lambda,h}(\xi)$, we first establish the approximation accuracy of $\xi_h$ to $\xi$.

For $p=4, 6, 8,$ the Taylor expansion of $\cos(x)$ with the Lagrange remainder can be written as $\cos(x)=\sum\limits_{m=0}^{p/2}\frac{(-1)^mx^{2m}}{(2m)!}+\frac{(-1)^{\frac{p}{2}+1}\cos(\gamma x)x^{p+2}}{(p+2)!},$ where $\gamma\in(0, 1)$. Furthermore, setting $x=\xi h$ in \eqref{eq:2.6}, we obtain
\begin{align}\label{eq:02.7}
 \psi_h(\xi) = \xi^2 \left(1+O\left( (\xi h)^{p}\right) \right).
\end{align}
By taking the square root of both sides of \eqref{eq:02.7}, utilizing the expansion $\sqrt{1+x}=1+\frac{1}{2}x+O(x^2)$,  and substituting $x=\xi h$ into the expansion, we get
\begin{align}\label{e2.17}
\xi_h=\xi + O(\xi^{p+1}h^p).
\end{align}
Then, we derive the error order between  $S^{\alpha}_{\lambda}(\xi)$ and $S^{\alpha}_{\lambda,h}(\xi)$. Given the function $s(z)=z^{\alpha}$, where $\alpha\in (0,1)\cup(1,2)$ and $ z\in\mathbb{C}$, we have $s'(z)=\alpha z^{\alpha-1}$. The modulus $|s'(z)|$, as a function of $|z|$ for $|z|>0$, is monotonically decreasing for $\alpha\in(0,1)$ and monotonically increasing for $\alpha\in(1,2)$. By applying the fundamental identity to $s(z)$ and combining the previously established monotonicity properties of $|s'(z)|$, we deduce
\begin{align}\label{eq:2.22}
|s(z_1)-s(z_2)|=\left|\int_{z_1}^{z_2}s'(z)dz\right|\leq \int_{z_1}^{z_2}|s'(z)|dz\leq \max\{|s'(z_1)|,\ |s'(z_2)|\}|z_1-z_2|.
\end{align}
Subtracting \eqref{eq:2.8} from \eqref{eq:1.5} and applying \eqref{e2.17}, \eqref{eq:2.22}, and the inequality $\left|\frac{\xi_h}{\xi}\right|^2\leq 1$, we conclude
\begin{align}
|S^{\alpha}_{\lambda}(\xi)-S^{\alpha}_{\lambda,h}(\xi)|&=\left|\int_{\|\theta\|=1}(\lambda+i\xi\cdot\theta)^{\alpha}-(\lambda+i \xi_h \cdot\theta)^{\alpha}d\theta\right|\nonumber\\
&\leq \int_{\|\theta\|=1}\left|(\lambda+i\xi\cdot\theta)^{\alpha}-(\lambda+i \xi_h \cdot\theta)^{\alpha}\right|d\theta\nonumber\\
&\leq\max\left\{\alpha\left(\lambda^2+|\xi_h|^2\right)^{\frac{{\alpha-1}}{2}},\ \alpha\left(\lambda^2+|\xi|^2\right)^{\frac{\alpha-1}{2}}\right\}|\xi_ h-\xi|\nonumber\\
&\leq C\alpha\left(\lambda^2+|\xi|^2\right)^{\frac{\alpha-1}{2}} |\xi|^{p+1}h^p\nonumber\\
& \leq C\alpha\left(\lambda^2+|\xi|^2\right)^{\frac{\alpha+p}{2}} h^p,
\end{align}
which completes the proof.
\end{proof}

Prior to establishing the convergence for the discrete TFL, we need to introduce the Barron space, the discrete norm, and the aliasing formula for the semi-discrete Fourier transform.
\begin{definition}(Barron space and maximum norm)\cite{CDH2025,HZD2021}.\\
(i): $\mathcal{B}^{s}(\mathbb{R}^d)=\left\{u\in L^1(\mathbb{R}^d): \|u\|_{\mathcal{B}^s}<\infty\right\},\   \|u\|_{\mathcal{B}^s}=\int_{\mathbb{R}^d}(1+|\xi|)^{s}|\hat{u}(\xi)|d\xi,\ s\geq 0.$\\
(ii): The grid points in domain $\Omega\subset \mathbb{R}^d$ are denoted by $\Omega_h = \{x_j = jh \mid j \in \mathbb{Z}^d, x_j \in\Omega\}$ for a spatial step size $h > 0$. The maximum norms of a grid function $U=\{u_j\}_{j\in \mathbb{Z}^d}$ with $u_j=u(x_j)$ are defined by $$ \|U\|_{l^{\infty}}=\sup\limits_{j\in\mathbb{Z}^d}|u_j|,\ \ \|U\|_{l^{\infty}(\Omega)}=\sup\limits_{x_j\in\Omega_h}|u_j|.$$
\end{definition}

\begin{lemma}\label{d1}
(Aliasing formula) \cite[Page 32, Theorem 2]{T2000}.\\
For a function $u\in L^2(\mathbb{R}^d)$ with Fourier transform $\hat{u}$ and semi-discrete Fourier transform $\check{u}$, if its first derivative is of bounded variation, then the following identity holds
\begin{align}\label{eq:2.24}
\check{u}(\xi)=\sum_{j\in\mathbb{Z}^d}\hat{u}\left(\xi+\frac{2j\pi}{h}\right),\ \ \xi\in \left[-\frac{\pi}{h},\frac{\pi}{h}\right]^d, \, h>0.
\end{align}
\end{lemma}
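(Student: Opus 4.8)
The plan is to recognize this as a Poisson-summation (aliasing) identity and to prove it by computing the Fourier coefficients of the periodized Fourier transform. Define the $\tfrac{2\pi}{h}$-periodic function $F(\xi):=\sum_{j\in\mathbb{Z}^d}\hat{u}\bigl(\xi+\tfrac{2j\pi}{h}\bigr)$. The first task is to show $F$ is well defined and continuous: since $u'$ is of bounded variation, the relation $\widehat{u'}(\xi)=i\xi\hat{u}(\xi)$ together with the Fourier--Stieltjes bound $|\widehat{u'}(\xi)|\le \mathrm{Var}(u')/|\xi|$ forces the decay $|\hat{u}(\xi)|\le C(1+|\xi|)^{-2}$, which makes the defining series converge absolutely and uniformly on $D_h=[-\tfrac{\pi}{h},\tfrac{\pi}{h}]^d$. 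Thus $F$ is continuous and genuinely $\tfrac{2\pi}{h}$-periodic, and the goal reduces to identifying $F$ with $\check{u}$ on $D_h$.

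Next I would expand $F$ in its Fourier series on the fundamental cell $D_h$, which has side length $\tfrac{2\pi}{h}$, writing $F(\xi)=\sum_{m\in\mathbb{Z}^d}c_m e^{i m h\cdot\xi}$ with coefficients $c_m=(h/2\pi)^d\int_{D_h}F(\xi)e^{-i m h\cdot\xi}\,d\xi$. Substituting the definition of $F$ and interchanging the absolutely convergent sum with the integral, each summand becomes $\int_{D_h}\hat{u}\bigl(\xi+\tfrac{2j\pi}{h}\bigr)e^{-i m h\cdot\xi}\,d\xi$. The key computational step is the change of variables $\zeta=\xi+\tfrac{2j\pi}{h}$ together with the observation that $e^{i m h\cdot(2j\pi/h)}=e^{2\pi i\,(m\cdot j)}=1$ for $m,j\in\mathbb{Z}^d$; this removes the $j$-dependence from the exponential and replaces $D_h$ by the translate $D_h+\tfrac{2j\pi}{h}$. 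Because these translates tile $\mathbb{R}^d$ as $j$ ranges over $\mathbb{Z}^d$, summing in $j$ reassembles the integral over all of $\mathbb{R}^d$, yielding $c_m=(h/2\pi)^d\int_{\mathbb{R}^d}\hat{u}(\zeta)e^{-i m h\cdot\zeta}\,d\zeta=h^d u(-mh)$, so the coefficients are exactly the scaled grid samples of $u$ read off from the inverse transform in \eqref{eq:2.3}.

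Finally, re-indexing $m\mapsto -m$ in the Fourier series gives $F(\xi)=h^d\sum_{m\in\mathbb{Z}^d}u(mh)e^{-i m h\cdot\xi}=h^d\sum_{x\in h\mathbb{Z}^d}u(x)e^{-i\xi\cdot x}=\check{u}(\xi)$, which is precisely the semi-discrete Fourier transform in \eqref{eq:2.2}. To close the argument I must justify that the Fourier series of $F$ converges to $F$ pointwise on $D_h$, and this is exactly where the bounded-variation hypothesis is indispensable: the classical Dirichlet--Jordan theorem guarantees pointwise convergence of the Fourier series of a function whose derivative is of bounded variation.

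I expect the main obstacle to be the analytic bookkeeping rather than the algebra: establishing the $|\xi|^{-2}$ decay of $\hat{u}$ from the bounded-variation assumption, so that the periodization $F$ converges and the sum--integral interchange is licit, and then invoking the correct pointwise convergence theorem for the Fourier series of $F$. In dimension $d>1$ one must additionally interpret the hypothesis so that it delivers summable decay in every coordinate direction; this is the setting treated in \cite[Page 32, Theorem 2]{T2000}, to which the statement appeals.
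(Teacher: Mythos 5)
The paper gives no proof of this lemma at all---it is quoted directly from Trefethen \cite{T2000}---so the relevant benchmark is the classical argument in that reference, and your proposal reproduces it faithfully: periodize $\hat{u}$ into $F$, justify convergence of the periodization via the $O\bigl((1+|\xi|)^{-2}\bigr)$ decay extracted from the bounded-variation hypothesis, compute the Fourier coefficients of $F$ by the tiling change of variables $\zeta=\xi+\tfrac{2j\pi}{h}$, and read off $c_m=h^d u(-mh)$ from the inversion formula \eqref{eq:2.3}. The algebraic core is correct, including the crucial observation $e^{imh\cdot(2j\pi/h)}=1$. The one step you should tighten is the closing appeal to the Dirichlet--Jordan theorem: that theorem requires the function being expanded---here $F$, living in frequency space---to be of bounded variation, whereas your hypothesis places bounded variation on $u'$ in physical space, and it is not immediate that the periodization of $\hat{u}$ inherits this property. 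The cleaner closure avoids pointwise convergence theorems altogether: the paper's definition of the semi-discrete transform already assumes the grid function $U\in l^2$, so the series in \eqref{eq:2.2} converges in $L^2(D_h)$ with Fourier coefficients exactly $h^d u(mh)$; since $F$ is continuous with the same coefficients, uniqueness of Fourier coefficients in $L^2(D_h)$ gives $F=\check{u}$ almost everywhere, which is all that is used downstream (the identity enters only under the integral sign in the $J_2$ estimate of \cref{T2.1}). With that substitution your proof is complete and matches the cited source's approach.
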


With the above preparations, we now prove the convergence of the discrete TFL.

\begin{theorem}\label{T2.1}(The discrete TFL errors)
Suppose that $u\in\mathcal{B}^{\beta+\alpha}(\mathbb{R}^d)$ with $\beta>0$ and $\alpha\in(0,1)\cup(1,2)$. Then the HFD schemes \eqref{eq:2.9} for the TFL \eqref{eq:4} admit the following estimates
\begin{align}\label{eq:02.28}
\left\|(-\Delta)_{\lambda}^{\frac{\alpha}{2}}U-(-\Delta_h)_{\lambda}^{\frac{\alpha}{2}}U\right\|_{l^{\infty} (\Omega)}\leq C_1h^{\min\{\beta,p\}}\|u\|_{\mathcal{B}^{\beta+\alpha}},\ \  p=4, 6, 8,
\end{align}
where $U=\{u_j\}_{j\in \mathbb{Z}^d}$ is the grid function of the function $u$ and $C_1$ denotes a positive constant dependent on $d$, $\alpha$ and $\lambda$.
\end{theorem}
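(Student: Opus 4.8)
The plan is to work entirely on the Fourier side. I would write the pointwise error at a grid point $x\in\Omega_h$ as
\[
E(x):=(-\Delta)_{\lambda}^{\frac{\alpha}{2}}u(x)-(-\Delta_h)_{\lambda}^{\frac{\alpha}{2}}u(x)
=\frac{1}{(2\pi)^d}\int_{\mathbb{R}^d}S^{\alpha}_{\lambda}(\xi)\hat u(\xi)e^{i\xi\cdot x}d\xi
-\frac{1}{(2\pi)^d}\int_{D_h}S^{\alpha}_{\lambda,h}(\xi)\check u(\xi)e^{i\xi\cdot x}d\xi,
\]
using \eqref{eq:4} and \eqref{eq:2.9}. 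Since $u\in\mathcal{B}^{\beta+\alpha}$ makes $\hat u$ decay fast enough for all integrals to converge absolutely, I would split the first integral over $D_h$ and $\mathbb{R}^d\setminus D_h$ and regroup the two $D_h$-integrals, obtaining $E=E_1+E_2+E_3$ with
\[
E_1(x)=\frac{1}{(2\pi)^d}\int_{\mathbb{R}^d\setminus D_h}S^{\alpha}_{\lambda}\hat u\,e^{i\xi\cdot x}d\xi,\qquad
E_2(x)=\frac{1}{(2\pi)^d}\int_{D_h}(S^{\alpha}_{\lambda}-S^{\alpha}_{\lambda,h})\hat u\,e^{i\xi\cdot x}d\xi,
\]
\[
E_3(x)=\frac{1}{(2\pi)^d}\int_{D_h}S^{\alpha}_{\lambda,h}(\hat u-\check u)\,e^{i\xi\cdot x}d\xi.
\]
Because $|e^{i\xi\cdot x}|=1$, every bound I derive is uniform in $x$ and therefore controls $\|\cdot\|_{l^\infty(\Omega)}$.

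For the symbol-error term $E_2$ I would apply \eqref{eqq:2.16}, which produces the weight $(\lambda^2+|\xi|^2)^{(\alpha+p)/2}h^p$ against $|\hat u|$. When $\beta\ge p$ this weight is dominated by $(1+|\xi|)^{\alpha+\beta}$, giving $h^p=h^{\min\{\beta,p\}}$ directly; when $\beta<p$, I would factor $(\lambda^2+|\xi|^2)^{(\alpha+p)/2}=(\lambda^2+|\xi|^2)^{(\alpha+\beta)/2}(\lambda^2+|\xi|^2)^{(p-\beta)/2}$ and use $|\xi|\le\sqrt d\,\pi/h$ on $D_h$ to bound the second factor by $Ch^{-(p-\beta)}$, so that $h^p\cdot h^{-(p-\beta)}=h^{\beta}$, the first factor being absorbed into the Barron norm. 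Both cases give $|E_2|\le C h^{\min\{\beta,p\}}\|u\|_{\mathcal{B}^{\beta+\alpha}}$. The tail term $E_1$ uses only the boundedness \eqref{eq:02.14}: on $\mathbb{R}^d\setminus D_h$ one has $|\xi|\ge\pi/h$, hence $1\le(h|\xi|/\pi)^{\beta}$, and inserting this spare factor turns $(\lambda^2+|\xi|^2)^{\alpha/2}$ into $Ch^{\beta}(1+|\xi|)^{\alpha+\beta}$, whence $|E_1|\le Ch^{\beta}\|u\|_{\mathcal{B}^{\beta+\alpha}}\le Ch^{\min\{\beta,p\}}\|u\|_{\mathcal{B}^{\beta+\alpha}}$ for $h\le1$.

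The main obstacle is the aliasing term $E_3$, in which the \emph{discrete} symbol is sampled at $\xi\in D_h$ while the regularity weight really lives at the aliased frequencies. I would invoke the aliasing formula \eqref{eq:2.24} to write $\hat u(\xi)-\check u(\xi)=-\sum_{j\ne0}\hat u(\xi+2j\pi/h)$ and bound $|S^{\alpha}_{\lambda,h}(\xi)|$ via \eqref{eq:02.15}. The crucial observation is that for $\xi\in D_h$ and $j\ne0$ the aliased point $\zeta:=\xi+2j\pi/h$ obeys $|\zeta|\ge\pi/h$, so $|\xi|\le\sqrt d\,\pi/h\le\sqrt d\,|\zeta|$ and therefore $|S^{\alpha}_{\lambda,h}(\xi)|\le 2(\lambda^2+|\xi|^2)^{\alpha/2}\le C(1+|\zeta|)^{\alpha}$; using $1\le(h|\zeta|/\pi)^{\beta}$ once more upgrades this to $Ch^{\beta}(1+|\zeta|)^{\alpha+\beta}$. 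A change of variables $\zeta=\xi+2j\pi/h$ then folds $\int_{D_h}\sum_{j\ne0}$ into $\int_{\mathbb{R}^d\setminus D_h}$, since the translates $D_h+2j\pi/h$ tile the exterior, yielding $|E_3|\le Ch^{\beta}\|u\|_{\mathcal{B}^{\beta+\alpha}}$. Summing the three estimates gives the claimed $h^{\min\{\beta,p\}}$ rate; the delicate point throughout is transferring the frequency weights between $\xi$ and the aliased $\zeta$ by exploiting $|\zeta|\ge\pi/h$ and $|\xi|\le\sqrt d\,\pi/h$ on $D_h$.
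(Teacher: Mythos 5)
Your proposal is correct and follows essentially the same route as the paper: the identical three-term splitting (symbol error on $D_h$, aliasing term on $D_h$, tail on $\mathbb{R}^d\setminus D_h$), bounded respectively by \eqref{eqq:2.16} with the same $\beta\lessgtr p$ case split, by \eqref{eq:02.15} combined with the aliasing formula \eqref{eq:2.24} and the change of variables $\zeta=\xi+2j\pi/h$, and by \eqref{eq:02.14} with the spare factor $(h|\xi|/\pi)^{\beta}$. Your explicit justification that $|\xi|\le\sqrt{d}\,\pi/h\le\sqrt{d}\,|\zeta|$ lets the discrete-symbol weight transfer to the aliased frequency is exactly the step the paper uses (implicitly) when it replaces $(\lambda^2+|\zeta-k|^2)^{\alpha/2}$ by $(\lambda^2+|\zeta|^2)^{\alpha/2}$, so no gap remains.
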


\begin{proof}
By subtracting \eqref{eq:2.9} from \eqref{eq:4}, we can obtain
{\allowdisplaybreaks
\begin{align}\label{eq:2.26}
& \ \ \ \ (-\Delta)_{\lambda}^{\frac{\alpha}{2}}u(x)-(-\Delta_h)_{\lambda}^{\frac{\alpha}{2}}u(x)\nonumber\\
& =\frac{1}{(2\pi)^d}\int_{\mathbb{R}^d}S^{\alpha}_{\lambda}(\xi)\hat{u}(\xi)e^{i \xi\cdot x}d\xi-\frac{1}{(2\pi)^d}\int_{{D_h}}S^{\alpha}_{\lambda,h}(\xi)\check{u}(\xi)e^{i\xi\cdot x}d\xi\nonumber\\
&=\frac{1}{(2\pi)^d}\int_{{D}_h }\left(S^{\alpha}_{\lambda}\left(\xi\right)-S^{\alpha}_{\lambda,h}\left(\xi\right)\right)\hat{u}(\xi)e^{i \xi\cdot x}d\xi+\frac{1}{(2\pi)^d}\int_{{D_h}}S^{\alpha}_{\lambda,h}(\xi)\left(\hat{u}(\xi)-\check{u}(\xi)\right)e^{i \xi\cdot x}d\xi\nonumber\\
&\ \ \ \ +\frac{1}{(2\pi)^d}\int_{\mathbb{R}^d\backslash{D}_h }S^{\alpha}_{\lambda}(\xi)\hat{u}(\xi)e^{i \xi\cdot x}d\xi\nonumber\\
& = J_1+J_2+J_3.
\end{align}}

We will estimate each item in turn.
$J_1$ can be estimated by using inequality \eqref{eqq:2.16} from \cref{l1} as follows
\begin{align}\label{eq:2.27}
J_1 &= \frac{1}{(2\pi)^d}\int_{{D}_h }\left(S^{\alpha}_{\lambda}(\xi)-S^{\alpha}_{\lambda,h}(\xi)\right)\hat{u}(\xi)e^{i \xi\cdot x}d\xi\nonumber\\
&\leq \frac{1}{(2\pi)^d}\int_{{D}_h }\left|S^{\alpha}_{\lambda}(\xi)-S^{\alpha}_{\lambda,h}(\xi)\right|\cdot\left|\hat{u}(\xi)\right|d\xi\nonumber\\
&\leq C\alpha h^p\int_{\mathbb{R}^d}\left(\lambda^2+|\xi|^2\right)^{\frac{\alpha+p}{2}}|\hat{u}(\xi)| d\xi \nonumber\\
& \leq C\alpha h^p\int_{\mathbb{R}^d}\left(\lambda^2+|\xi|^2\right)^{\frac{\alpha+\beta}{2}}\left(\lambda^2+|\xi|^2\right)^{\frac{p-\beta}{2}}|\hat{u}(\xi)| d\xi \nonumber\\
&  \leq C\alpha h^p\int_{\mathbb{R}^d}\left(1+|\xi|\right)^{{\alpha+\beta}}\left(1+|\xi|^2\right)^{\frac{p-\beta}{2}}|\hat{u}(\xi)| d\xi .
\end{align}
When $\beta\leq p$, applying the inequality $|\xi|\leq  \frac{d\pi}{h}$ to \eqref{eq:2.27} allows us to deduce that
\begin{align}\label{eq:2.28} J_1\leq C_1 h^\beta\|u\|_{\mathcal{B}^{\beta+\alpha}}.
\end{align}
When $\beta>p$, using the inequality $\frac{1}{1+|\xi|}\leq 1$ gives
\begin{align}\label{eq:2.29}
J_1&\leq C\alpha h^p\int_{\mathbb{R}^d}(1+|\xi|)^{{\alpha+\beta}}\left(1+|\xi|^2\right)^{\frac{p-\beta}{2}}|\hat{u}(\xi)| d\xi \leq C _1h^p \|u\|_{\mathcal{B}^{\beta+\alpha}}.
\end{align}
From \eqref{eq:2.28} and \eqref{eq:2.29}, it follows that $J_1$ satisfies the following estimate
\begin{align}\label{eq:2.30}
J_1\leq Ch^{\min\{\beta,p\}} \|u\|_{\mathcal{B}^{\beta+\alpha}}.
\end{align}

For the term $J_2$, applying \eqref{eq:02.15} from \cref{l1} together with \cref{d1} yields
{\allowdisplaybreaks
\begin{align}\label{eq:2.31}
J_2 &= \frac{1}{(2\pi)^d}\int_{{D_h}}S^{\alpha}_{\lambda,h}(\xi)(\hat{u}(\xi)-\check{u}(\xi))e^{i \xi\cdot x}d\xi\nonumber\\
&\leq \frac{1}{(2\pi)^d}\int_{{D_h}}\left|S^{\alpha}_{\lambda,h}(\xi)\right|\cdot|\hat{u}(\xi)-\check{u}(\xi)|d\xi\nonumber\\
    & \leq C \int_{{D_h}}\left(\lambda^2+|\xi|^2\right)^{\frac{\alpha}{2}}\left(\sum_{k\in \frac{2\pi}{h} \mathbb{Z}^d\backslash\{0\}}|\hat{u}(\xi+k)|\right)d\xi\nonumber\\
    &\leq C \int_{{D_h}}\left(\lambda^2+|\xi|^2\right)^{\frac{\alpha}{2}}\max_{k\in \frac{2\pi}{h} \mathbb{Z}^d\backslash\{0\}}|\xi+k|^{-\beta}\left(\sum_{k\in \frac{2\pi}{h} \mathbb{Z}^d\backslash\{0\}}|\xi+k|^\beta|\hat{u}(\xi+k)|\right)d\xi\nonumber\\
    &\leq C \left(\frac{h}{\pi}\right)^\beta\int_{{D_h}}\left(\lambda^2+|\xi|^2\right)^{\frac{\alpha}{2}}\left(\sum_{k\in \frac{2\pi}{h} \mathbb{Z}^d\backslash\{0\}}|\xi+k|^\beta|\hat{u}(\xi+k)|\right)d\xi\nonumber\\
    &\leq C \left(\frac{h}{\pi}\right)^\beta\sum_{k\in \frac{2\pi}{h} \mathbb{Z}^d\backslash\{0\}}\int_{{D_h}+k}\left(\lambda^2+|\zeta-k|^2\right)^{\frac{\alpha}{2}}|\zeta|^\beta|\hat{u}(\zeta)|d\zeta\nonumber\\
    & \leq C_1 h^\beta\int_{\mathbb{R}^d}\left(\lambda^2+|\zeta|^2\right)^{\frac{\alpha}{2}}|\zeta|^\beta|\hat{u}(\zeta)|d\zeta\nonumber\\
    & \leq C_1 h^\beta\int_{\mathbb{R}^d}\left(\lambda^2+|\zeta|^2\right)^{\frac{\alpha+\beta}{2}}|\hat{u}(\zeta)|d\zeta\nonumber\\
    &\leq C_1 h^\beta\int_{\mathbb{R}^d}\left(1+|\zeta|\right)^{\alpha+\beta}|\hat{u}(\zeta)|d\zeta.
\end{align}}

Employing  \eqref{eq:02.14} from \cref{l1} and the fact that $|\xi|\geq  \frac{\pi}{h}$ for  $\xi\in\mathbb{R}^d\backslash D_h$, we have
\begin{align}\label{eq:2.32}
J_3& = \frac{1}{(2\pi)^d}\int_{\mathbb{R}^d\backslash{D}_h }S^{\alpha}_{\lambda}(\xi)\hat{u}(\xi)e^{i \xi\cdot x}d\xi\nonumber\\
& \leq C\int_{\mathbb{R}^d\backslash{D}_h }\left|S^{\alpha}_{\lambda}(\xi)\right|\cdot|\hat{u}(\xi)|d\xi\nonumber\\
&\leq C\int_{\mathbb{R}^d\backslash{D}_h }\left(\lambda^2+|\xi|^2\right)^{\frac{\alpha}{2}}|\hat{u}(\xi)|d\xi\nonumber\\
& \leq C_1\int_{\mathbb{R}^d\backslash{D}_h }(1+|\xi|)^{\beta+\alpha}|\xi|^{-\beta}|\hat{u}(\xi)|d\xi\nonumber\\
& \leq C_1 h^\beta \|u\|_{\mathcal{B}^{\beta+\alpha}}.
\end{align}

From \eqref{eq:2.30}\--\eqref{eq:2.32}, we thus conclude the proof of \cref{T2.1}.
\end{proof}

\section{The HFD methods for TFL equations}
 We construct the HFD methods for \eqref{eq:1.1} and \eqref{eq:1.2} through the operator approximations given in \eqref{eq:2.1} and \eqref{eq:2.9}.
\begin{align}\label{eq:3.1}
(-\Delta_h)_{\lambda}^{\frac{\alpha}{2}}u_j^h-\sigma \Delta_hu_j^h+\nu u_j^h &= f(x_j),\ \ x_j\in\Omega_h ,\ \ \alpha\in(0,1)\cup(1,2), \\ \label{eq:3.2}
 u_j^h &= 0,\ \ x_j\in \Omega_h^c:= h\mathbb{Z}^d\backslash\Omega_h,
\end{align}
where $u_j^h=u^h(x_j)$ represents the numerical solution.

To analyze the stability and convergence of \eqref{eq:3.1} and \eqref{eq:3.2}, we first recall several key definitions and lemmas.

\begin{definition}(Discrete inner product and norm of basic function spaces)\cite{CDH2025,HZD2021}.\label{0d1}\\
For grid functions $U=\{u_j\}_{j\in\mathbb{Z}^d}$ and $ V=\{v_j\}_{j\in\mathbb{Z}^d}$ with $u_j=u(x_j), v_j=v(x_j)$, the definitions of discrete inner product and norm are as follows.\\
(i): The discrete inner product is defined by $(U,V)_h=h^d\sum\limits_{j\in \mathbb{Z}^d}u_j\overline{v_j},$ where $\overline{v_j}$ denotes the complex conjugate of $v_j$. For a bounded domain $\Omega\subset\mathbb{R}^d$, $(U,V)_{\Omega}=h^d\sum_{x_j\in\Omega_h}u_j\overline{v_j}.$ \\
(ii): The discrete $l^2$-norms read $\|U\|_{l^2}=\sqrt{(U,U)_h}$ and $\|U\|_{l^2(\Omega)}=\sqrt{(U,U)_\Omega}.$\\
(iii): Parseval's identity is given by $(U,V)_h=\frac{1}{(2\pi)^d}\int_{D_h} \check{{u}}(\xi)\overline{\check{{v}}(\xi)}d\xi,$ where $\check{{u}}(\xi)$ and $\check{{v}}(\xi)$ are the semi-discrete Fourier transforms of $u$ and $v$, respectively.\\
(iv): For $s>0$, the discrete fractional Sobolev semi-norm $|\cdot|_{H_h^s}$ and the corresponding norm $\|\cdot\|_{H_h^s}$ are defined by
$$|U|^2_{H_h^s}=\int_{D_h}|\xi|^{2s}|\check{u}(\xi)|^2d\xi, \ \ \|U\|^2_{H_h^s}=\int_{D_h}\left(1+|\xi|^{2s}\right)|\check{u}(\xi)|^2d\xi.$$
(v): The basic function spaces are specified as
$$H_h^s:=\left\{U\ |\ \|U\|_{H_h^s}<\infty\right\},\ \ V_h^s:=\left\{U\ |\ U\in H_h^s,\  u(x_j)=0,\ x_j\in \Omega_h^c\right\}.$$
\end{definition}

\begin{lemma}(Poincar$\acute{\text{e}}$'s inequality)\cite{CDH2025,HZD2021}.\label{l3.1}
For grid functions $U\in H_h^s$ with $s>0$, it holds that
\begin{align}
\|U\|_{l^2} \leq C |U|_{H_h^s},
\end{align}
where $C$ is a positive constant independent of $h$.
\end{lemma}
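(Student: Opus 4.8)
The plan is to pass to the Fourier side and exploit the boundedness of the support of $U$. By Parseval's identity in \cref{0d1}(iii) one has $\|U\|_{l^2}^2=(2\pi)^{-d}\int_{D_h}|\check u(\xi)|^2\diff\xi$, whereas the seminorm reads $|U|_{H_h^s}^2=\int_{D_h}|\xi|^{2s}|\check u(\xi)|^2\diff\xi$. The obstruction is that $|\xi|^{2s}$ degenerates as $\xi\to 0$, so the seminorm by itself controls only the high-frequency content of $\check u$ and says nothing about the low-frequency mass; the missing information must come from the fact that the grid functions to which the estimate is applied vanish outside the bounded set $\Omega_h$ (that is, $U\in V_h^s$ in the notation of \cref{0d1}(v)). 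This compact support is precisely what rescues the degeneracy, and without it the inequality fails, since a grid function with $\check u$ concentrated near the origin would have small $|U|_{H_h^s}$ but order-one $\|U\|_{l^2}$.

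The execution proceeds by splitting the frequency integral over $D_h$ at a threshold $\rho>0$, to be fixed later independently of $h$. On the high-frequency region $\{|\xi|\ge\rho\}$ the bound is immediate, because $|\xi|^{2s}\ge\rho^{2s}$ there gives $(2\pi)^{-d}\int_{\{|\xi|\ge\rho\}\cap D_h}|\check u|^2\diff\xi\le (2\pi)^{-d}\rho^{-2s}|U|_{H_h^s}^2$. The crux is the low-frequency region, and here I would extract a pointwise bound on $\check u$ that is uniform in both $\xi$ and $h$. Writing $\check u(\xi)=h^d\sum_{x_j\in\Omega_h}u_j e^{-i\xi\cdot x_j}$ and applying Cauchy--Schwarz over the active grid points, of which there are $\#\Omega_h\le C|\Omega|h^{-d}$, one gets $|\check u(\xi)|\le h^d(\#\Omega_h)^{1/2}\big(\sum_j|u_j|^2\big)^{1/2}\le C_\Omega\|U\|_{l^2}$, where I used $\sum_j|u_j|^2=h^{-d}\|U\|_{l^2}^2$ and $C_\Omega$ depends only on $|\Omega|$ (and $d$), not on $h$. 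Integrating this pointwise bound over the ball $\{|\xi|<\rho\}$ of volume $\omega_d\rho^d$ yields $(2\pi)^{-d}\int_{\{|\xi|<\rho\}}|\check u|^2\diff\xi\le (2\pi)^{-d}\omega_d C_\Omega^2\,\rho^{d}\|U\|_{l^2}^2$.

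Combining the two pieces gives $\|U\|_{l^2}^2\le (2\pi)^{-d}\omega_d C_\Omega^2\rho^{d}\|U\|_{l^2}^2+(2\pi)^{-d}\rho^{-2s}|U|_{H_h^s}^2$. I would then fix $\rho=\rho_0$ small enough that the first coefficient is at most $1/2$; since this choice depends only on $d$ and $\Omega$, it is independent of $h$. Absorbing that term into the left-hand side leaves $\|U\|_{l^2}^2\le 2(2\pi)^{-d}\rho_0^{-2s}|U|_{H_h^s}^2$, which is the desired inequality with $C=2(2\pi)^{-d}\rho_0^{-2s}$ independent of $h$. The main obstacle is the uniform low-frequency estimate just described: one must control $\check u$ near $\xi=0$ in a way that does not blow up as $h\to 0$, and this is exactly the point where boundedness of $\Omega$ together with the zero extension of $U$ is indispensable. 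A minor verification is that the high-frequency region $\{|\xi|\ge\rho_0\}\cap D_h$ be nonempty, which holds as soon as $\sqrt{d}\,\pi/h>\rho_0$, i.e. for all sufficiently small $h$, the regime of interest for the convergence theory.
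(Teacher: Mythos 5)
Your proof is correct, but there is nothing in the paper to compare it against: the paper does not prove this lemma at all, it simply imports it from \cite{CDH2025,HZD2021}, and the arguments in those references follow the same frequency-splitting pattern you use. Two points in your write-up are worth highlighting. First, you are right that the statement, read literally for all $U\in H_h^s$, is false: if $\check u$ is supported in a ball of radius $\epsilon$ about the origin, then $|U|_{H_h^s}\le (2\pi)^{d/2}\epsilon^{s}\|U\|_{l^2}$, so no $h$-independent constant can work as $\epsilon\to 0$. The implicit hypothesis is exactly the one you supply, namely $U\in V_h^s$ in the notation of \cref{0d1}(v) (zero values outside $\Omega_h$ with $\Omega$ bounded), and this is the setting in which the lemma is actually invoked in the proof of \cref{T3.1}, where the numerical solution satisfies the exterior condition \eqref{eq:3.2}. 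Second, your low-frequency estimate is the crux and it is sound: compact support plus Cauchy--Schwarz gives $|\check u(\xi)|\le h^d(\#\Omega_h)^{1/2}\bigl(h^{-d}\|U\|_{l^2}^2\bigr)^{1/2}\le C_\Omega\|U\|_{l^2}$ uniformly in $\xi$ and $h$, since $\#\Omega_h\le C|\Omega|h^{-d}$; splitting $D_h$ at a fixed radius $\rho_0$ chosen only in terms of $d$ and $\Omega$, and absorbing the low-frequency piece, then closes the argument. Your final caveat about $\{|\xi|\ge\rho_0\}\cap D_h$ being nonempty is unnecessary: if $D_h\subset\{|\xi|<\rho_0\}$, the absorption step simply forces $\|U\|_{l^2}=0$, and the inequality holds trivially, so no smallness restriction on $h$ is needed.
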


Next, we show the positivity of the discrete TFL under the discrete inner product.
\begin{lemma}(Positivity).\label{l3.2}
Assume that $U\in H_h^s$ for $s>0$. Then the following estimate holds
\begin{align}
\left(\left(-\Delta_h\right)_{\lambda}^{\frac{\alpha}{2}}U,U\right)_h>0,\ \ \alpha\in(0,1)\cup(1,2),
\end{align}
for any $h>0$.
\end{lemma}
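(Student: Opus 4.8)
The plan is to pass to the frequency domain, reduce the claim to a pointwise property of the discrete symbol $S^{\alpha}_{\lambda,h}$, and then settle that property by a one-variable monotonicity argument. First I would invoke Parseval's identity (Definition \ref{0d1}(iii)) together with the frequency-space form \eqref{eq:2.9} of the operator, whose semi-discrete Fourier transform is $S^{\alpha}_{\lambda,h}(\xi)\check u(\xi)$. This gives
\begin{align}\label{eq:plan-parseval}
\left(\left(-\Delta_h\right)_{\lambda}^{\frac{\alpha}{2}}U,U\right)_h=\frac{1}{(2\pi)^d}\int_{D_h}S^{\alpha}_{\lambda,h}(\xi)\,|\check u(\xi)|^2\,d\xi .
\end{align}
Since $|\check u(\xi)|^2\geq 0$ and $\check u\not\equiv 0$ for a nonzero grid function, positivity of the left-hand side will follow once I show that $S^{\alpha}_{\lambda,h}(\xi)$ is real and strictly positive for $\xi\in D_h\setminus\{0\}$ (i.e.\ nonnegative everywhere and vanishing only on a null set).

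Next I would reduce the symbol to a scalar inequality. Using the antipodal symmetry $\theta\mapsto-\theta$ of the unit sphere in \eqref{eq:2.8}, the two contributions $(\lambda+i\,\xi_h\cdot\theta)^\alpha$ and $(\lambda-i\,\xi_h\cdot\theta)^\alpha$ pair into twice a real part, so that
$$
S^{\alpha}_{\lambda,h}(\xi)=(-1)^{\lfloor\alpha\rfloor}\int_{\|\theta\|=1}\Bigl(\mathrm{Re}\bigl((\lambda+i\,\xi_h\cdot\theta)^\alpha\bigr)-\lambda^\alpha\Bigr)\,d\theta ,
$$
which is manifestly real. Hence it suffices to prove, for every real $t$ (playing the role of $\xi_h\cdot\theta$), the pointwise estimate $(-1)^{\lfloor\alpha\rfloor}\bigl(\mathrm{Re}((\lambda+it)^\alpha)-\lambda^\alpha\bigr)\geq 0$, with equality precisely when $t=0$.

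I would then establish this scalar inequality by monotonicity. Set $\psi(t)=\mathrm{Re}((\lambda+it)^\alpha)$ on the principal branch; it is even with $\psi(0)=\lambda^\alpha$, so I may take $t>0$. Differentiating, $\psi'(t)=\mathrm{Re}\bigl(i\alpha(\lambda+it)^{\alpha-1}\bigr)=-\alpha\,\mathrm{Im}\bigl((\lambda+it)^{\alpha-1}\bigr)$, and writing $\lambda+it=\rho e^{i\varphi}$ with $\varphi=\arctan(t/\lambda)\in(0,\tfrac{\pi}{2})$ gives $\psi'(t)=-\alpha\rho^{\alpha-1}\sin\bigl((\alpha-1)\varphi\bigr)$. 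For $\alpha\in(0,1)$ one has $(\alpha-1)\varphi\in(-\tfrac{\pi}{2},0)$, so $\psi'>0$ and $\psi(t)>\lambda^\alpha$; for $\alpha\in(1,2)$ one has $(\alpha-1)\varphi\in(0,\tfrac{\pi}{2})$, so $\psi'<0$ and $\psi(t)<\lambda^\alpha$. In both regimes $(-1)^{\lfloor\alpha\rfloor}(\psi(t)-\lambda^\alpha)>0$ for $t\neq0$ (the case $\lambda=0$ handled by the limit $\varphi\to\tfrac{\pi}{2}$, where $\psi(t)=|t|^\alpha\cos(\alpha\pi/2)$ has the correct sign). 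This yields $S^{\alpha}_{\lambda,h}(\xi)\geq 0$, with $S^{\alpha}_{\lambda,h}(\xi)=0$ only when $\xi_h\cdot\theta=0$ for all $\theta$, i.e.\ $\xi_h=0$. Because $\psi_h(\xi_l)>0$ for $\xi_l\in[-\tfrac{\pi}{h},\tfrac{\pi}{h}]\setminus\{0\}$ (as in \cref{l1}; e.g.\ for $p=4$, $h^2\psi_h=\tfrac13(\cos\xi_l h-1)(\cos\xi_l h-7)>0$), this forces $\xi=0$, a null set, and \eqref{eq:plan-parseval} is therefore strictly positive.

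The routine parts are the Parseval reduction and the sphere-symmetry step; the main obstacle is the sign analysis of the symbol, namely showing that $\mathrm{Re}((\lambda+it)^\alpha)-\lambda^\alpha$ carries the sign $(-1)^{\lfloor\alpha\rfloor}$ uniformly in the real variable $t$. The delicate points there are the careful branch/argument bookkeeping for $(\lambda+it)^{\alpha-1}$ and the degenerate $\lambda=0$ limit, together with upgrading nonnegativity to strict positivity by ruling out $\xi_h=0$ off the origin via the positivity of $\psi_h$ inherited from \cref{l1}.
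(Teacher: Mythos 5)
Your proof is correct, and while it shares the paper's opening move, it settles the key step by a genuinely different mechanism. Both arguments start with Parseval's identity to write $\left(\left(-\Delta_h\right)_{\lambda}^{\frac{\alpha}{2}}U,U\right)_h=\frac{1}{(2\pi)^d}\int_{D_h}S^{\alpha}_{\lambda,h}(\xi)\,|\check u(\xi)|^2\,d\xi$. From there the paper does \emph{not} argue pointwise in $\xi$: because the sign pattern of $\xi_h$ changes across orthants, it splits $D_h$ into sign regions (six for $d=2$, and it remarks ``fourteen cases'' for $d=3$), uses the real-valuedness of $u$ (so $|\check u(-\xi)|=|\check u(\xi)|$) to pair $\xi$ with $-\xi$, and then invokes the integral representations \eqref{eq:3.10} of $z^\alpha$ to rewrite each paired integrand as $\pm\int_0^\infty 4\sin^2(\cdot)\,e^{-\lambda y}y^{-\alpha-1}dy$, whose definite sign cancels the prefactor $(-1)^{\lfloor\alpha\rfloor}$. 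You instead exploit the antipodal symmetry $\theta\mapsto-\theta$ inside the sphere integral \eqref{eq:2.8}, which makes $S^{\alpha}_{\lambda,h}(\xi)$ manifestly real for each fixed $\xi$, and then prove the scalar inequality $(-1)^{\lfloor\alpha\rfloor}\bigl(\mathrm{Re}\left((\lambda+it)^\alpha\right)-\lambda^\alpha\bigr)>0$ for $t\neq 0$ by elementary calculus: the sign of $\frac{d}{dt}\mathrm{Re}\left((\lambda+it)^\alpha\right)=-\alpha\rho^{\alpha-1}\sin((\alpha-1)\varphi)$ flips exactly with $\lfloor\alpha\rfloor$, and your branch bookkeeping and the $\lambda=0$ limit are both handled correctly. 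What your route buys: it is dimension-uniform (no orthant case analysis), it does not need $u$ to be real-valued, it avoids the representation formulas \eqref{eq:3.10} entirely, and it treats strictness more precisely by identifying the zero set of the symbol ($\xi_h=0$, hence $\xi=0$, a null set). What the paper's route buys: the $\sin^2$ representation is sign-definite with no complex-analytic branch arguments, and it exposes the tempered L\'evy (Bernstein-function) structure of the operator, which is more robust if the symbol were not a pure power. One small citation slip: the strict positivity $\psi_h(\xi_l)>0$ for $\xi_l\neq 0$ is not what \cref{l1} proves (that lemma gives only the upper bound $\psi_h(\xi)\leq \xi^2$); the lower bound is established in \cref{l3.3}, where $\frac{16}{3\pi^2}\xi^2\leq\psi_h(\xi)$ is shown for $p=4$ and asserted for $p=6,8$. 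Your factorization $h^2\psi_h=\tfrac13(\cos(\xi_l h)-1)(\cos(\xi_l h)-7)$ independently covers $p=4$, but for $p=6,8$ you should either supply the analogous verification or cite \cref{l3.3} instead.
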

\begin{proof}
 When $d=2$, with $\xi=(\xi_1,\xi_2)$ and the unit vector $ \theta=(\cos\theta_1,\sin\theta_1)$, we take the discrete inner product of the grid functions $(-\Delta_h)_{\lambda}^{\frac{\alpha}{2}}U$ and $U$. Together with \eqref{eq:02.9} and \eqref{eq:2.9}, this leads to the following equality
\begin{align}\label{e:3.5}
&\left((-\Delta_h)_{\lambda}^{\frac{\alpha}{2}}U,U\right)_h
\nonumber\\
&=\frac{1}{(2\pi)^d}\int_{D_h}\mathcal{F}_h\left[(-\Delta_h)_{\lambda}^{\frac{\alpha}{2}}U\right](\xi)\cdot \overline{\mathcal{F}_h\left[U\right](\xi)}d\xi\nonumber\\
&=\frac{(-1)^{\lfloor\alpha\rfloor}}{(2\pi)^{d}}\int_{\|\theta\|=1}\int_{D_h\cap\{\xi_1>0,\xi_2=0\}}\left(\left(\lambda+i\left(\psi_h\left(\xi_1\right)^{\frac{1}{2}},\psi_h\left(\xi_2\right)^{\frac{1}{2}}\right)\cdot\theta\right)^{\alpha}-\lambda^\alpha\right) \left|\check{u}(\xi)\right|^2d\xi d\theta\nonumber\\
&\ \ \ \  +\frac{(-1)^{\lfloor\alpha\rfloor}}{(2\pi)^{d}}\int_{\|\theta\|=1}\int_{D_h\cap\{\xi_1>0,\xi_2>0\}}\left(\left(\lambda+i\left(\psi_h\left(\xi_1\right)^{\frac{1}{2}},\psi_h\left(\xi_2\right)^{\frac{1}{2}}\right)\cdot\theta\right)^{\alpha}-\lambda^\alpha\right) \left|\check{u}(\xi)\right|^2d\xi d\theta\nonumber\\
&\ \ \ \  + \frac{(-1)^{\lfloor\alpha\rfloor}}{(2\pi)^{d}}\int_{\|\theta\|=1}\int_{D_h\cap\{\xi_1>0,\xi_2<0\}}\left(\left(\lambda+i\left(\psi_h\left(\xi_1\right)^{\frac{1}{2}},-\psi_h\left(\xi_2\right)^{\frac{1}{2}}\right)\cdot\theta\right)^\alpha-\lambda^\alpha\right) \left|\check{u}(\xi)\right|^2d\xi d\theta\nonumber\\
&\ \ \ \  + \frac{(-1)^{\lfloor\alpha\rfloor}}{(2\pi)^{d}}\int_{\|\theta\|=1}\int_{D_h\cap\{\xi_1<0,\xi_2=0\}}\left(\left(\lambda+i\left(-\psi_h\left(\xi_1\right)^{\frac{1}{2}},\psi_h\left(\xi_2\right)^{\frac{1}{2}}\right)\cdot\theta\right)^\alpha-\lambda^\alpha\right) \left|\check{u}(\xi)\right|^2d\xi d\theta\nonumber\\
&\ \ \ \  + \frac{(-1)^{\lfloor\alpha\rfloor}}{(2\pi)^{d}}\int_{\|\theta\|=1}\int_{D_h\cap\{\xi_1<0,\xi_2<0\}}\left(\left(\lambda+i\left(-\psi_h\left(\xi_1\right)^{\frac{1}{2}},-\psi_h\left(\xi_2\right)^{\frac{1}{2}}\right)\cdot\theta\right)^\alpha-\lambda^\alpha\right) \left|\check{u}(\xi)\right|^2d\xi d\theta\nonumber\\
&\ \ \ \  + \frac{(-1)^{\lfloor\alpha\rfloor}}{(2\pi)^{d}}\int_{\|\theta\|=1}\int_{D_h\cap\{\xi_1<0,\xi_2>0\}}\left(\left(\lambda+i\left(-\psi_h\left(\xi_1\right)^{\frac{1}{2}},\psi_h\left(\xi_2\right)^{\frac{1}{2}}\right)\cdot\theta\right)^\alpha-\lambda^\alpha\right) \left|\check{u}(\xi)\right|^2d\xi d\theta.
\end{align}
For $d=3$, with $\xi=(\xi_1,\xi_2, \xi_3)$ and the unit vector $\theta=(\cos\theta_2\sin\theta_1, \sin\theta_2\sin\theta_1, \cos\theta_1)$, the equality \eqref{e:3.5} can be decomposed into fourteen cases.

As $u$ is real-valued, it follows from \eqref{eq:2.2} that
\begin{align}\label{eq:3.6}
\check{u}(-\xi)=\overline{\check{u}(\xi)}, \ \ \left|\check{u}(-\xi)\right|=\left|\overline{\check{u}(\xi)}\right|=|{\check{u}(\xi)}|.
\end{align}
Substituting  $-\xi$ for $\xi$ in the last three lines of \eqref{e:3.5} and applying \eqref{eq:2.6} and \eqref{eq:3.6}, we get
\begin{align}\label{eq:3.7}
\left(\left(-\Delta_h\right)_{\lambda}^{\frac{\alpha}{2}}U,U\right)_h=J_4+J_5,
\end{align}
where
\begin{align}\label{eq:3.8}
J_4=&\frac{(-1)^{\lfloor\alpha\rfloor}}{(2\pi)^{d}}\int_{\|\theta\|=1}\int_{D_h\cap\{\xi_1>0,\xi_2\geq0\}}\left(\left(\lambda+i\left(\psi_h\left(\xi_1\right)^{\frac{1}{2}},\psi_h\left(\xi_2\right)^{\frac{1}{2}}\right)\cdot\theta\right)^{\alpha}-2\lambda^\alpha\right.\nonumber\\
 &\left.+ \left(\lambda+i\left(-\psi_h\left(\xi_1\right)^{\frac{1}{2}},-\psi_h\left(\xi_2\right)^{\frac{1}{2}}\right)\cdot\theta\right)^\alpha\right) \left|\check{u}(\xi)\right|^2d\xi d\theta\\ \label{eq:3.9}
J_5=&\frac{(-1)^{\lfloor\alpha\rfloor}}{(2\pi)^{d}}\int_{\|\theta\|=1}\int_{D_h\cap\{\xi_1>0,\xi_2<0\}}\left(\left(\lambda+i\left(\psi_h\left(\xi_1\right)^{\frac{1}{2}},-\psi_h\left(\xi_2\right)^{\frac{1}{2}}\right)\cdot\theta\right)^\alpha-2\lambda^\alpha\right.\nonumber\\
&\left.+\left(\lambda+i\left(-\psi_h\left(\xi_1\right)^{\frac{1}{2}},\psi_h\left(\xi_2\right)^{\frac{1}{2}}\right)\cdot\theta\right)^\alpha\right) \left|\check{u}(\xi)\right|^2d\xi d\theta.
\end{align}
Further, leveraging the following equalities
\begin{equation}\label{eq:3.10}
z^\alpha = \begin{cases}
    \frac{\alpha}{\Gamma(1-\alpha)}\int_{0}^{\infty}(1-e^{-zy})y^{-\alpha-1}dy,\ \ &0<\alpha<1,\\
    \frac{\alpha(\alpha-1)}{\Gamma(2-\alpha)}\int_{0}^{\infty}(e^{-zy}-1+zy)y^{-\alpha-1}dy,\ \ &1<\alpha<2,
\end{cases}
\end{equation}
we can derive
\begin{align}\label{eq:3.11}
J_4=&\frac{(-1)^{\lfloor\alpha\rfloor}}{(2\pi)^{d}}\int_{\|\theta\|=1}\int_{D_h\cap\{\xi_1>0,\xi_2\geq0\}}M_1 |\check{u}(\xi)|^2d\xi d\theta,\\ \label{eq:3.12}
J_5=&\frac{(-1)^{\lfloor\alpha\rfloor}}{(2\pi)^{d}}\int_{\|\theta\|=1}\int_{D_h\cap\{\xi_1>0,\xi_2<0\}}M_2|\check{u}(\xi)|^2d\xi d\theta,
\end{align}
in which, for $0<\alpha<1$,
\begin{align}\label{eq:3.13}
M_1 &= \frac{\alpha}{\Gamma(1-\alpha)}\int_{0}^\infty\left(2-e^{-i\left(\psi_h\left(\xi_1\right)^{\frac{1}{2}},\psi_h\left(\xi_2\right)^{\frac{1}{2}}\right)\cdot\theta y}-e^{-i\left(-\psi_h\left(\xi_1\right)^{\frac{1}{2}},-\psi_h\left(\xi_2\right)^{\frac{1}{2}}\right)\cdot\theta y}\right) e^{-\lambda y}y^{-\alpha-1}dy\nonumber\\
&=\frac{\alpha}{\Gamma(1-\alpha)}\int_{0}^\infty\left(2-2\cos\left(\left(\psi_h\left(\xi_1\right)^{\frac{1}{2}},\psi_h\left(\xi_2\right)^{\frac{1}{2}}\right)\cdot\theta y\right)\right) e^{-\lambda y}y^{-\alpha-1}dy\nonumber\\
&=\frac{\alpha}{\Gamma(1-\alpha)}\int_{0}^\infty4\sin^2\left(\frac{\left(\psi_h\left(\xi_1\right)^{\frac{1}{2}},\psi_h\left(\xi_2\right)^{\frac{1}{2}}\right)\cdot\theta y}{2}\right) e^{-\lambda y}y^{-\alpha-1}dy,\\ \label{eq:3.14}
M_2 &= \frac{\alpha}{\Gamma(1-\alpha)}\int_{0}^\infty\left(2-e^{-i\left(\psi_h\left(\xi_1\right)^{\frac{1}{2}},-\psi_h\left(\xi_2\right)^{\frac{1}{2}}\right)\cdot\theta y}-e^{-i\left(-\psi_h\left(\xi_1\right)^{\frac{1}{2}},\psi_h\left(\xi_2\right)^{\frac{1}{2}}\right)\cdot\theta y}\right) e^{-\lambda y}y^{-\alpha-1}dy\nonumber\\
&=\frac{\alpha}{\Gamma(1-\alpha)}\int_{0}^\infty\left(2-2\cos\left(\left(\psi_h\left(\xi_1\right)^{\frac{1}{2}},-\psi_h\left(\xi_2\right)^{\frac{1}{2}}\right)\cdot\theta y\right)\right) e^{-\lambda y}y^{-\alpha-1}dy\nonumber\\
&=\frac{\alpha}{\Gamma(1-\alpha)}\int_{0}^\infty4\sin^2\left(\frac{\left(\psi_h\left(\xi_1\right)^{\frac{1}{2}},-\psi_h\left(\xi_2\right)^{\frac{1}{2}}\right)\cdot\theta y}{2}\right) e^{-\lambda y}y^{-\alpha-1}dy,
\end{align}
for $1<\alpha<2$,
{\allowdisplaybreaks
\begin{align}\label{eq:3.15}
M_1&=\frac{\alpha(\alpha-1)}{\Gamma(2-\alpha)}\int_{0}^\infty\left(-2+e^{-i\left(\psi_h\left(\xi_1\right)^{\frac{1}{2}},\psi_h\left(\xi_2\right)^{\frac{1}{2}}\right)\cdot\theta y}+e^{-i\left(-\psi_h\left(\xi_1\right)^{\frac{1}{2}},-\psi_h\left(\xi_2\right)^{\frac{1}{2}}\right)\cdot\theta y}\right) e^{-\lambda y}y^{-\alpha-1}dy\nonumber\\
&=\frac{\alpha(\alpha-1)}{\Gamma(2-\alpha)}\int_{0}^\infty\left(-2+2\cos\left(\left(\psi_h\left(\xi_1\right)^{\frac{1}{2}},\psi_h\left(\xi_2\right)^{\frac{1}{2}}\right)\cdot\theta y\right)\right) e^{-\lambda y}y^{-\alpha-1}dy\nonumber\\
&=-\frac{\alpha(\alpha-1)}{\Gamma(2-\alpha)}\int_{0}^\infty4\sin^2\left(\frac{\left(\psi_h\left(\xi_1\right)^{\frac{1}{2}},\psi_h\left(\xi_2\right)^{\frac{1}{2}}\right)\cdot\theta y}{2}\right) e^{-\lambda y}y^{-\alpha-1}dy,\\ \label{eq:3.16}
M_2 &= \frac{\alpha(\alpha-1)}{\Gamma(2-\alpha)}\int_{0}^\infty\left(-2+e^{-i\left(\psi_h\left(\xi_1\right)^{\frac{1}{2}},-\psi_h\left(\xi_2\right)^{\frac{1}{2}}\right)\cdot\theta y}+e^{-i\left(-\psi_h\left(\xi_1\right)^{\frac{1}{2}},\psi_h\left(\xi_2\right)^{\frac{1}{2}}\right)\cdot\theta y}\right) e^{-\lambda y}y^{-\alpha-1}dy\nonumber\\
&=\frac{\alpha(\alpha-1)}{\Gamma(2-\alpha)}\int_{0}^\infty\left(-2+2\cos\left(\left(\psi_h\left(\xi_1\right)^{\frac{1}{2}},-\psi_h\left(\xi_2\right)^{\frac{1}{2}}\right)\cdot\theta y\right)\right) e^{-\lambda y}y^{-\alpha-1}dy\nonumber\\
&=-\frac{\alpha(\alpha-1)}{\Gamma(2-\alpha)}\int_{0}^\infty4\sin^2\left(\frac{\left(\psi_h\left(\xi_1\right)^{\frac{1}{2}},-\psi_h\left(\xi_2\right)^{\frac{1}{2}}\right)\cdot\theta y}{2}\right) e^{-\lambda y}y^{-\alpha-1}dy.
\end{align}}
Combining \eqref{eq:3.11} and \eqref{eq:3.12} with \eqref{eq:3.13}--\eqref{eq:3.16} implies $\left((-\Delta_h)_{\lambda}^{\frac{\alpha}{2}}U,U\right)_h>0$, which concludes the proof.
\end{proof}
Then, we state the norm equivalence lemma of the discrete Laplacian under the discrete inner product.
\begin{lemma}\label{l3.3}(Norm equivalence). Let the grid function $U\in H_h^s$. Then, an equivalent statement is as follows
\begin{align}\label{eq:3.19}
\frac{1}{\pi^2}\left|U\right|^2_{H_h^1}\leq(-\Delta_h U,U)_h\leq \left|U\right|^2_{H_h^1},
\end{align}
for any $h>0$.
\end{lemma}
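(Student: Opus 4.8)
The plan is to pass to the frequency domain and reduce the claimed equivalence in \eqref{eq:3.19} to a pointwise bound on the discrete symbol of $-\Delta_h$. First I would identify that symbol: since the stencil in \eqref{eq:2.1} acts additively across the coordinate directions $e_l$, applying $\mathcal{F}_h$ as in \eqref{eq:2.4}--\eqref{eq:2.6} gives $\mathcal{F}_h[(-\Delta_h)U](\xi)=\Psi_h(\xi)\check{u}(\xi)$ with $\Psi_h(\xi)=\sum_{l=1}^d \psi_h(\xi_l)$. Invoking Parseval's identity from \cref{0d1} then yields
$$(-\Delta_h U,U)_h=\frac{1}{(2\pi)^d}\int_{D_h}\Psi_h(\xi)\,|\check{u}(\xi)|^2\,d\xi,$$
while the $H_h^1$ semi-norm is an integral of $|\xi|^2|\check{u}(\xi)|^2$ against the same measure. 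Consequently the whole lemma follows once I establish the pointwise two-sided estimate
$$\tfrac{1}{\pi^2}|\xi|^2\le \Psi_h(\xi)\le |\xi|^2,\qquad \xi\in D_h,$$
after which the two inequalities in \eqref{eq:3.19} are obtained by multiplying through by $|\check{u}(\xi)|^2\ge 0$ and integrating over $D_h$.

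Second, I would reduce this pointwise estimate to one dimension. Because $\Psi_h(\xi)=\sum_l \psi_h(\xi_l)$ and $|\xi|^2=\sum_l \xi_l^2$, it suffices to prove $\tfrac{1}{\pi^2}\xi_l^2\le \psi_h(\xi_l)\le \xi_l^2$ for each component $\xi_l\in[-\tfrac{\pi}{h},\tfrac{\pi}{h}]$ and sum. The upper bound $\psi_h(\xi_l)\le \xi_l^2$ is exactly the inequality $\left|\tfrac{\xi_h}{\xi}\right|^2\le 1$ already proven in \cref{l1}, which also accounts for the sharp constant $1$ on the right of \eqref{eq:3.19}; so nothing new is required there.

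Third --- and this is the real content --- I would prove the lower bound. The cleanest route is comparison with the standard second-order symbol $\psi_h^{(2)}(\xi)=\tfrac{2}{h^2}(1-\cos(\xi h))=\tfrac{4}{h^2}\sin^2(\tfrac{\xi h}{2})$, for which Jordan's inequality $|\sin t|\ge \tfrac{2}{\pi}|t|$ on $[-\tfrac{\pi}{2},\tfrac{\pi}{2}]$ gives $\psi_h^{(2)}(\xi)\ge \tfrac{4}{\pi^2}\xi^2$ on $D_h$. It then remains to show that each high-order symbol dominates the second-order one, i.e. $\psi_h(\xi)\ge \psi_h^{(2)}(\xi)$ for $p=4,6,8$. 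Writing $y=\xi h\in[-\pi,\pi]$ and $c=\cos y$, the difference $h^2(\psi_h-\psi_h^{(2)})$ becomes a polynomial in $c$ vanishing to second order at $c=1$; factoring out $(c-1)^2$ leaves a factor that is nonnegative on $[-1,1]$ (namely $\tfrac13(1-c)^2$ for $p=4$, and $\tfrac{1}{45}(c-1)^2(19-4c)$, $\tfrac{1}{315}(c-1)^2(9c^2-46c+142)$ for $p=6,8$), so the difference is nonnegative. Combining, $\psi_h(\xi)\ge \tfrac{4}{\pi^2}\xi^2\ge \tfrac{1}{\pi^2}\xi^2$, which yields the left inequality in \eqref{eq:3.19} with room to spare.

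The hard part is this last nonnegativity step: unlike the upper bound, the lower bound is not derived from the monotonicity argument of \cref{l1} and must be verified separately for each of the three stencils. The $c=\cos y$ substitution and the explicit $(c-1)^2$ factorizations handle all three cases uniformly, and since the $d$-dimensional estimate is merely the coordinate-wise sum of the one-dimensional ones, the extension to $d=2,3$ introduces no further obstacle.
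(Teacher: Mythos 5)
Your proposal is correct, and it shares the paper's skeleton: Parseval's identity reduces \eqref{eq:3.19} to the pointwise bound $\frac{1}{\pi^2}\xi^2\le\psi_h(\xi)\le\xi^2$ on $D_h$, the upper bound is inherited from $\left|\frac{\xi_h}{\xi}\right|^2\le 1$ in \cref{l1}, and the multidimensional case is a coordinate-wise sum. Where you genuinely diverge is the lower bound, which is indeed the real content. The paper argues by calculus: for $p=4$ it defines $g_0(y)=cy^2-\left(\frac16\cos(2y)-\frac83\cos(y)+\frac52\right)$, fixes $c=\frac{16}{3\pi^2}$ by the endpoint condition $g_0(\pi)=0$, and shows $g_0\le0$ on $[0,\pi]$ through a third-derivative monotonicity analysis; the cases $p=6,8$ (and $d=2,3$) are only asserted to follow ``by the same approach, albeit with a different constant.'' You instead dominate the second-order symbol: Jordan's inequality gives $\psi_h^{(2)}(\xi)=\frac{4}{h^2}\sin^2\left(\frac{\xi h}{2}\right)\ge\frac{4}{\pi^2}\xi^2$ on $D_h$, and the claim $\psi_h\ge\psi_h^{(2)}$ follows from your factorizations in $c=\cos y$, which check out: the differences $h^2\left(\psi_h-\psi_h^{(2)}\right)$ equal $\frac13(1-c)^2$, $\frac{1}{45}(c-1)^2(19-4c)$ and $\frac{1}{315}(c-1)^2\left(9c^2-46c+142\right)$ for $p=4,6,8$ respectively, and the last two cofactors are positive on $[-1,1]$ (for $p=6$ the minimum is $15$; for $p=8$ the discriminant $46^2-4\cdot9\cdot142$ is negative). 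The trade-off: the paper's endpoint normalization yields a marginally sharper constant for $p=4$ ($\frac{16}{3\pi^2}$ versus your $\frac{4}{\pi^2}$, both $\ge\frac{1}{\pi^2}$), while your argument is uniform and fully explicit across all three stencils, actually completing the $p=6,8$ verification that the paper leaves to the reader; the intermediate fact $\psi_h\ge\psi_h^{(2)}$ is also a clean structural statement in its own right. One shared cosmetic blemish, inherited from the paper rather than introduced by you: both proofs pass from the symbol bound to \eqref{eq:3.19} as if $|U|^2_{H_h^1}$ carried the same normalization as Parseval's identity, although \cref{0d1}(iii) has a factor $(2\pi)^{-d}$ that the semi-norm in \cref{0d1}(iv) does not.
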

\begin{proof}
Taking the discrete inner product between $-\Delta_hU$ and $U$, and employing \eqref{eq:2.4} and  Parseval's equality (iii) from \cref{0d1}, we have
\begin{align}\left(-\Delta_h U,U\right)_h&=\frac{1}{(2\pi)^d}\int_{D_h} \mathcal{F}_h\left[-\Delta_hU\right](\xi)\cdot \overline{\mathcal{F}_h\left[U\right](\xi)}d\xi=\frac{1}{(2\pi)^d}\int_{D_h} \psi_h(\xi) |\check{u}(\xi)|^2d\xi.
\end{align}
In \cref{l1}, we have already proven that $\frac{\psi_h(\xi)}{\xi^2}\leq 1$. Thus, it remains to show that $c\leq\frac{\psi_h(\xi)}{\xi^2}$ for some positive constant $c$.

Setting $p=4$ in \eqref{eq:2.6} and letting $y=\xi h$ with $y\in[-\pi,\pi]^d$, we define the function $g_0(y)$ as
\begin{align}g_0(y)=cy^2-\psi_h\left(\frac{y}{h}\right)h^2=cy^2-\left(\frac{1}{6}\cos(2y)-\frac{8}{3}\cos(y)+\frac{5}{2}\right).
\end{align}

For $d=1$, owing to the evenness of $g_0(y)$ on $[-\pi,\pi]$,  it suffices to verify $g_0(y)\leq 0$ on $[0,\pi].$ Upon differentiating $g_0(y)$, we arrive at
\begin{align}\label{e3.64}
&g_0'(y) = 2cy+\frac{1}{3}\sin(2y)-\frac{8}{3}\sin(y),\ \ \ \
g_0''(y) = 2c+\frac{2}{3}\cos(2y)-\frac{8}{3}\cos(y),\\ \label{e3.65}
& g_0'''(y) = -\frac{4}{3}\sin(2y)+\frac{8}{3}\sin(y) = \frac{8}{3}\sin(y)(1-\cos(y)), \ \ y\in[0,\pi].
\end{align}
Setting $g_0(\pi)=0$ implies $c=\frac{16}{3\pi^2}$. It follows from \eqref{e3.65} that $g_0'''(y)\geq0$, which shows that $g_0''(y)$ is monotonically increasing on $[0,\pi]$. In conjunction with $g_0''(0)<0$ and $g_0''(\pi)>0$, we conclude that $g_0'(y)$ first decreases and then increases on $[0,\pi]$. Using $g_0'(0)=0$ and $g_0'(\pi)>0$, we conclude that $g_0(y)$ is also first decreasing and then increasing on $[0,\pi]$, whence $g_0(y)\leq \max\{g_0(0),g_0(\pi)\}=0, \ \forall y\in[-\pi,\pi]$.

Based on the preceding analysis, we confirm that
\begin{align}
\frac{1}{\pi^2}|\xi|^2\leq\frac{16}{3\pi^2}|\xi|^2\leq \psi_h(\xi)\leq |\xi|^2.
\end{align}
\eqref{eq:3.19} follows directly from (iv) of \cref{0d1}.

For $d=2, 3$, \eqref{eq:3.19} remains valid by a series of one-dimensional cases.
For $p=6, 8$, the result also holds by the same approach, albeit with a different constant $c$.
\end{proof}
\begin{remark}
 Although \cref{l1} provides an upper bound for $S^{\alpha}_{\lambda,h}$, a corresponding lower bound is non-trivial. The main difficulty stems from the fact that $S^{\alpha}_{\lambda,h}(\xi)$ is defined by an intricate integral.
\end{remark}
Finally, we prove the desired stability and convergence of the HFD methods for the TFL equations.
\begin{theorem}\label{T3.1}(Stability).
For the HFD schemes \eqref{eq:3.1} and \eqref{eq:3.2}, the numerical solution $U^h=\left\{u_j^h\right\}_{j\in \mathbb{Z}^d}$ is unique and satisfies the stability estimate
\begin{align}\label{eq:3.24}
\sigma\left|U^h\right|_{H_h^1}+\nu \left\|U^h\right\|_{l^2}\leq C \|f\|_{l^2(\Omega)},
\end{align}
for all $h>0$, where $ f=\left\{f_j\right\}_{x_j\in \Omega_h}$ and $C$ is a generic positive constant independent of $h$.
\end{theorem}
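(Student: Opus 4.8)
The plan is to run the standard energy argument: test the discrete equation \eqref{eq:3.1} against the numerical solution $U^h$ in the discrete inner product and then invoke the structural results already in hand, namely the positivity of \cref{l3.2}, the norm equivalence of \cref{l3.3}, and Poincar\'e's inequality \cref{l3.1}. First I would take the inner product $(\cdot,U^h)_\Omega$ of \eqref{eq:3.1} over the interior grid $\Omega_h$. Because $U^h$ vanishes on $\Omega_h^c$ by \eqref{eq:3.2}, for every grid function $V$ one has $(V,U^h)_\Omega=(V,U^h)_h$, so the restricted inner product may be replaced by the full inner product $(\cdot,\cdot)_h$ over $h\mathbb{Z}^d$; this is precisely the setting in which \cref{l3.2,l3.3} are stated. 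The result is the energy identity
\[
\left((-\Delta_h)_{\lambda}^{\frac{\alpha}{2}}U^h,U^h\right)_h+\sigma\left(-\Delta_h U^h,U^h\right)_h+\nu\|U^h\|_{l^2}^2=(f,U^h)_\Omega .
\]
Each term on the left is nonnegative: the first is strictly positive by \cref{l3.2}, the second is bounded below by $\tfrac{\sigma}{\pi^2}|U^h|_{H_h^1}^2$ through \cref{l3.3}, and the third is exactly $\nu\|U^h\|_{l^2}^2$.

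Next I would discard the (nonnegative) tempered fractional Laplacian term and apply the Cauchy--Schwarz inequality on the right, obtaining
\[
\frac{\sigma}{\pi^2}|U^h|_{H_h^1}^2+\nu\|U^h\|_{l^2}^2\leq (f,U^h)_\Omega\leq \|f\|_{l^2(\Omega)}\,\|U^h\|_{l^2}.
\]
To pass from this quadratic inequality to the linear target estimate \eqref{eq:3.24}, I would bound the two contributions separately. Since $\tfrac{\sigma}{\pi^2}|U^h|_{H_h^1}^2\geq 0$, the reaction part gives $\nu\|U^h\|_{l^2}^2\leq \|f\|_{l^2(\Omega)}\|U^h\|_{l^2}$, hence $\nu\|U^h\|_{l^2}\leq \|f\|_{l^2(\Omega)}$. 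For the diffusion part, I would feed the Poincar\'e bound $\|U^h\|_{l^2}\leq C|U^h|_{H_h^1}$ from \cref{l3.1} (with $s=1$) into the right-hand side, so that $\tfrac{\sigma}{\pi^2}|U^h|_{H_h^1}^2\leq C\|f\|_{l^2(\Omega)}|U^h|_{H_h^1}$ and therefore $\sigma|U^h|_{H_h^1}\leq \pi^2 C\|f\|_{l^2(\Omega)}$. Adding the two bounds yields \eqref{eq:3.24} with a constant depending only on $\pi$ and the Poincar\'e constant; the degenerate situations $|U^h|_{H_h^1}=0$ or $\|U^h\|_{l^2}=0$ force $U^h=0$ (again by \cref{l3.1}) and are trivial.

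For the well-posedness claim, I would observe that \eqref{eq:3.1}--\eqref{eq:3.2} is a square linear system in the interior unknowns $\{u_j^h:x_j\in\Omega_h\}$, so it suffices to exclude nontrivial homogeneous solutions. Setting $f\equiv 0$ in the energy identity makes the sum of three nonnegative terms vanish; by the strict positivity in \cref{l3.2}, the fractional term can be zero only if $U^h=0$, which delivers uniqueness and, for a square system, existence as well. I expect the one genuine subtlety — not a deep obstacle — to be the mismatch between the \emph{quadratic} energy estimate and the \emph{linear} norm in \eqref{eq:3.24}, which the term-by-term splitting together with Poincar\'e resolves. This is also exactly why the argument survives the degeneration of either $\sigma$ or $\nu$ to zero: as the preceding remark emphasizes, the tempered fractional Laplacian contributes only positivity and no quantitative coercivity, so the estimate must be closed through the Laplacian and reaction terms rather than through the fractional term itself.
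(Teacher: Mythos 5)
Your derivation of the stability estimate is essentially the paper's own proof: the same energy identity \eqref{eq:3.25}, the same use of \cref{l3.2} to discard the fractional term, \cref{l3.3} for coercivity of the Laplacian term, Cauchy--Schwarz on the right-hand side to reach \eqref{eq:3.26}, and then Poincar\'e's inequality to split the quadratic bound into the two linear bounds of \eqref{eq:3.27}; your version is marginally more direct, since you apply \cref{l3.1} immediately as $\|U^h\|_{l^2}\leq C\left|U^h\right|_{H_h^1}$, whereas the paper detours through $\|U^h\|_{H_h^1}$ and leaves the final absorption into $\left|U^h\right|_{H_h^1}$ implicit. The one place you genuinely diverge is uniqueness. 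The paper sets $f=0$ in the finished estimate \eqref{eq:3.24} and concludes $\|U^h\|_{l^2}=0$, a deduction that requires $\nu>0$ (or $\sigma>0$ combined with Poincar\'e); since the theorem's setting allows $\sigma\geq 0$ and $\nu\geq 0$, that argument becomes vacuous when $\sigma=\nu=0$. You instead return to the energy identity with $f=0$ and invoke the strict positivity of $\left(\left(-\Delta_h\right)_{\lambda}^{\frac{\alpha}{2}}U^h,U^h\right)_h$ from \cref{l3.2}: a vanishing sum of nonnegative terms forces the fractional term to vanish, hence $U^h=0$. This closes uniqueness --- and, as you observe, existence for the square linear system, a point the paper does not address --- uniformly in $\sigma,\nu\geq 0$, which is a genuine (if small) improvement in robustness. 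Both routes share the caveat that the strict inequality of \cref{l3.2} must be read as holding for nonzero $U$, since it is trivially an equality at $U=0$; your explicit justification that $(V,U^h)_\Omega=(V,U^h)_h$ via the zero extension of $U^h$ is likewise a detail the paper leaves implicit.
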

\begin{proof}
Taking the discrete inner product of \eqref{eq:3.1} with $U^h$, we obtain
\begin{align}\label{eq:3.25}
\left(\left(-\Delta_h\right)_{\lambda}^{\frac{\alpha}{2}}U^h,U^h\right)_h+\sigma\left(-\Delta_hU^h,U^h\right)_h+\nu \left\|U^h\right\|_{l^2}=\left(f,U^h\right)_\Omega.
\end{align}
 Employing \cref{l3.2}, \cref{l3.3} and the Cauchy-Schwarz inequality, we get the following inequality from \eqref{eq:3.25} \begin{align}\label{eq:3.26}
\frac{\sigma}{\pi^2}\left|U^h\right|^2_{H_h^1}+\nu \left\|U^h\right\|^2_{l^2}\leq \|f\|_{l^2(\Omega)}\left\|U^h\right\|_{l^2}.
\end{align}
Applying \cref{l3.1} to \eqref{eq:3.26} yields
\begin{align}\label{eq:3.27}
\nu\left\|U^h\right\|_{l^2}\leq \|f\|_{l^2(\Omega)},\ \ \frac{\sigma}{\pi^2}\left|U^h\right|^2_{H_h^1}\leq \|f\|_{l^2(\Omega)}\left\|U^h\right\|_{l^2}\leq C\|f\|_{l^2(\Omega)}\left\|U^h\right\|_{H_h^1}.
\end{align}
\eqref{eq:3.24} follows directly from \eqref{eq:3.27}.

If $f(x)= 0$ in \eqref{eq:3.24}, then $\left\|U^h\right\|_{l^2}=0$. Together with \eqref{eq:2.3} and (iii) of \cref{0d1}, this implies the uniqueness of the numerical solution $U^h$.
\end{proof}
\begin{theorem}\label{d3.1}(Convergence). Let ${u}\in\mathcal{B}^{\beta+\alpha}(\mathbb{R}^d)$ be the exact solution of \eqref{eq:1.1} and \eqref{eq:1.2}, with $\beta>0$ and $\alpha\in(0,1)\cup(1,2)$. For the exact solution grid function $U=\{u_j\}_{j\in\mathbb{Z}^d}$ and numerical solution $U^h=\{u^h_j\}_{j\in\mathbb{Z}^d}$ of the HFD schemes \eqref{eq:3.1} and \eqref{eq:3.2}, the following error estimates hold
 \begin{align}
 \sigma \left|U-U^h\right|_{H_h^1}+\nu \|U-U^h\|_{l^2} \leq C h^{\min\{\beta,p\}},
 \end{align}
 for $p = 4, 6, 8$ and sufficiently small $h$.
\end{theorem}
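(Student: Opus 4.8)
The plan is to follow the standard consistency-plus-stability paradigm, reusing the a priori estimate of \cref{T3.1} on the error grid function. First I would set $E=U-U^{h}=\{u_j-u_j^h\}_{j\in\mathbb Z^d}$ and derive the error equation. Since $u$ solves \eqref{eq:1.1} pointwise, I evaluate that identity at the grid points $x_j\in\Omega_h$ and subtract the scheme \eqref{eq:3.1} applied to $U^{h}$; the zeroth-order terms cancel because $u_j=u(x_j)$, leaving
\begin{align*}
(-\Delta_h)_{\lambda}^{\frac{\alpha}{2}}E_j-\sigma\Delta_hE_j+\nu E_j=R_j,\qquad x_j\in\Omega_h,
\end{align*}
where the truncation error is
\begin{align*}
R_j=\Big[(-\Delta_h)_{\lambda}^{\frac{\alpha}{2}}u_j-(-\Delta)_{\lambda}^{\frac{\alpha}{2}}u(x_j)\Big]+\sigma\Big[(-\Delta_h)u_j-(-\Delta)u(x_j)\Big].
\end{align*}
A point worth recording is that, by the exterior conditions \eqref{eq:1.2} and \eqref{eq:3.2}, both $U$ and $U^{h}$ vanish on $\Omega_h^{c}$, so $E_j=0$ for $x_j\in\Omega_h^{c}$ and $E\in V_h^{1}$; this is precisely what is needed for the positivity and norm-equivalence lemmas to be applicable to $E$.

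Next I would repeat the argument in the proof of \cref{T3.1} verbatim, now with right-hand side $R$ in place of $f$. Taking the discrete inner product of the error equation with $E$, and using that $E$ is supported in $\Omega_h$ so that the full inner product reduces to $(R,E)_\Omega$, I invoke the positivity of the discrete TFL (\cref{l3.2}), the lower bound $\sigma(-\Delta_hE,E)_h\ge\frac{\sigma}{\pi^2}|E|_{H_h^1}^2$ from the norm equivalence (\cref{l3.3}), the Cauchy--Schwarz inequality, and Poincar\'e's inequality (\cref{l3.1}). Mirroring \eqref{eq:3.26}--\eqref{eq:3.27}, this yields the conditional stability bound
\begin{align*}
\sigma\,|E|_{H_h^1}+\nu\,\|E\|_{l^2}\le C\,\|R\|_{l^2(\Omega)}.
\end{align*}
Because $\Omega$ is bounded, $\Omega_h$ contains $O(h^{-d})$ points and $\|R\|_{l^2(\Omega)}^2=h^d\sum_{x_j\in\Omega_h}|R_j|^2\le|\Omega|\,\|R\|_{l^\infty(\Omega)}^2$, so the entire problem is reduced to the consistency estimate $\|R\|_{l^\infty(\Omega)}\le C\,h^{\min\{\beta,p\}}$.

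For the consistency error I would treat the two brackets in $R_j$ separately. The first bracket is exactly the quantity controlled by \cref{T2.1}, giving $\|R^{\mathrm{TFL}}\|_{l^\infty(\Omega)}\le C_1h^{\min\{\beta,p\}}\|u\|_{\mathcal B^{\beta+\alpha}}$. For the second bracket I would run the symbol-comparison machinery of \cref{T2.1}, now for the classical negative Laplacian: its continuous symbol is $|\xi|^2$ and its discrete symbol is $\psi_h(\xi)$, for which \cref{l3.3} already supplies $\psi_h(\xi)\le|\xi|^2$, while the Taylor expansion \eqref{eq:02.7} supplies $|\psi_h(\xi)-|\xi|^2|\le C\,h^p|\xi|^{p+2}$. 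Splitting the Fourier representation of $(-\Delta_h)U-(-\Delta)U$ over $D_h$ and $\mathbb R^d\setminus D_h$ and using the aliasing formula (\cref{d1}), exactly as in the $J_1,J_2,J_3$ decomposition, then produces a bound of the same $h$-power type for $\|R^{\Delta}\|_{l^\infty(\Omega)}$.

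I expect the main obstacle to be precisely this classical-Laplacian consistency term and the regularity bookkeeping it demands. Since $-\Delta$ has order $2$ while the TFL has order $\alpha<2$, the symbol split consumes two derivatives, and carrying it through with the stated regularity $u\in\mathcal B^{\beta+\alpha}$ naively yields the rate $h^{\min\{\beta+\alpha-2,\,p\}}$ rather than $h^{\min\{\beta,p\}}$: on $D_h$ one estimates $(1+|\xi|)^{p+2-\beta-\alpha}\le (C/h)^{p+2-\beta-\alpha}$, so the factor $h^p$ is partially offset whenever $\beta<p+2-\alpha$. The delicate step is therefore to organize this estimate so that the $\sigma$-weighted classical part does not degrade the global order --- for instance by noting that $\mathcal B^{\beta+\alpha}$ regularity with $\beta\ge p+2-\alpha$ already delivers the full $O(h^p)$ classical truncation (via the Barron embedding into $C^{p+2}$), so that the clean rate $h^{\min\{\beta,p\}}$ holds once the solution is smooth enough (and in particular is sharp in the diffusion-free case $\sigma=0$, where only \cref{T2.1} is used). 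Once the consistency bound $\|R\|_{l^\infty(\Omega)}\le C h^{\min\{\beta,p\}}\|u\|_{\mathcal B^{\beta+\alpha}}$ is secured, combining it with the stability reduction above completes the proof.
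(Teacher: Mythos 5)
Your proposal follows essentially the same route as the paper's own proof: form the error equation for $E=U-U^h$ with the truncation error on the right-hand side, repeat the stability argument of \cref{T3.1} (positivity of the discrete TFL from \cref{l3.2}, norm equivalence from \cref{l3.3}, Cauchy--Schwarz, Poincar\'e from \cref{l3.1}) to obtain $\sigma|E|_{H_h^1}+\nu\|E\|_{l^2}\le C\|R\|_{l^2(\Omega)}$ as in \eqref{eq:3.30}, and then bound the residual by the consistency results --- \cref{T2.1} for the tempered part and the classical truncation estimate \eqref{eq:2.1} for the $\sigma$-term; your reduction of $\|R\|_{l^2(\Omega)}$ to $\|R\|_{l^\infty(\Omega)}$ via boundedness of $\Omega$ is the same step the paper performs implicitly, since \cref{T2.1} is an $l^\infty$ estimate.

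The one substantive divergence is your treatment of the classical Laplacian term, and there your extra care exposes something real. The paper simply cites \eqref{eq:2.1} --- a Taylor-expansion estimate whose $O(h^p)$ rate requires $u\in C^{p+2}$ --- and then asserts $\sigma\|\Delta U-\Delta_h U\|_{l^2(\Omega)}\le Ch^{\min\{\beta,p\}}\|u\|_{\mathcal{B}^{\beta+\alpha}}$ without further comment. As you observe, the stated hypothesis $u\in\mathcal{B}^{\beta+\alpha}(\mathbb{R}^d)$ does not supply that smoothness unless $\beta+\alpha\ge p+2$, and running the symbol comparison with only Barron regularity yields the rate $h^{\min\{\beta+\alpha-2,\,p\}}$, strictly worse than $h^{\min\{\beta,p\}}$ whenever $\alpha<2$ and $\beta+\alpha-2<p$. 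So the ``obstacle'' you flag is not a defect of your write-up relative to the paper; it is a gap in the paper's own argument, which your proposal at least names and partially repairs: the clean rate holds when $\sigma=0$ (only \cref{T2.1} is used), or when $\beta\ge p+2-\alpha$ so that the Barron regularity delivers the full classical truncation order. To fully match the theorem as stated for $\sigma>0$ and small $\beta$, either the hypothesis must be strengthened or the $\sigma$-term's contribution to the rate must be stated as $h^{\min\{\beta+\alpha-2,\,p\}}$; neither your proposal nor the paper closes that case.
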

\begin{proof}
Denote $E_j=u_j-u_j^h$. Then, subtracting \eqref{eq:3.1} and \eqref{eq:3.2} from \eqref{eq:1.1} and \eqref{eq:1.2} term by term gives the error equations
\begin{align}
\left(-\Delta_h\right)_{\lambda}^{\frac{\alpha}{2}}E_j-\sigma\Delta_h E_j+\nu E_j&=\left(-\Delta_h\right)_{\lambda}^{\frac{\alpha}{2}}u_j-\left(-\Delta\right)_{\lambda}^{\frac{\alpha}{2}}u_j+\sigma\left(\Delta u_j-\Delta_hu_j\right),\ \ x_j\in \Omega_h,\nonumber\\
E_j&=0,\ \  x_j\in \Omega^c_h.
\end{align}
Defining the error grid function $E=U-U^h$ and employing an argument similar to the proof of \cref{T3.1}, we derive the following estimate
\begin{align}\label{eq:3.30}
\sigma\left|E\right|_{H_h^1}+\nu \left\|E\right\|_{l^2}\leq C \left\|\left(-\Delta_h\right)_{\lambda}^{\frac{\alpha}{2}}U-\left(-\Delta\right)_{\lambda}^{\frac{\alpha}{2}}U+\sigma\left(\Delta U-\Delta_hU\right)\right\|_{l^2(\Omega)}.
\end{align}
Applying \cref{T2.1} and \eqref{eq:2.1} to \eqref{eq:3.30} yields the key estimate
\begin{align}
\sigma\left|E\right|_{H_h^1}+\nu \left\|E\right\|_{l^2}& \leq C \left\|\left(-\Delta_h\right)_{\lambda}^{\frac{\alpha}{2}}U-\left(-\Delta\right)_{\lambda}^{\frac{\alpha}{2}}U\right\|_{l^2(\Omega)}+\sigma\left\|\Delta U-\Delta_hU\right\|_{l^2(\Omega)}\leq C h^{\min\{\beta,p\}}\|u\|_{\mathcal{B}^{\beta+\alpha}}.
\end{align}
This establishes the desired results.
\end{proof}
\begin{remark}
 The HFD methods \eqref{eq:3.1} and \eqref{eq:3.2} constitute a linear system
\begin{align}\label{eq:3.32}
AU^h=\left(A_0+{\sigma}A_1+\nu \text{I}\right)U^h=f,
\end{align}
where $A_0$ and $A_1$ are symmetric block Toeplitz matrices corresponding to the coefficients in \eqref{eq:2.14} and \eqref{eq:2.1}, respectively, $I$ is the identity matrix, $U^h$ denotes the numerical solution, and $f$ stands for the source grid function defined on $\Omega_h$. The structure of the matrix $A$ is analogous to that described in Section 4 of Ref. \cite{HZD2021}, but differs only in the coefficients. Consequently, in numerical experiments, matrix-vector multiplications employ the Fast Fourier Transform (FFT). For specific implementation details, see Ref. \cite{HZD2021}.
\end{remark}
The following algorithm implements the HFD methods for the TFL equations.
\begin{algorithm}[H]
  \caption{The HFD for the TFL equations}
  \label{alg:2}
\begin{itemize}
    \item{\bf{Input: }}$\lambda$: tempering parameter, $\alpha$: fractional power, $L$: interval length, $h$: spatial step size, $f$: the source grid function.
\end{itemize}
\begin{itemize}
    \item{\bf{Output: }}$U^h$: the numerical solution for TFL equations.
\end{itemize}
 \begin{algorithmic}
 \STATE {\textbf{Step 1: }}Generate grid $\Omega_h=\left\{x_j=x_{\min}+jh \mid j=(j_1,\ldots,j_d),\  1\leq j_l\leq N_1-1= \frac{L}{h}-1,\ l=1,\ldots,d\right\}$, with $x_{\min}$ denoting the starting coordinate in each dimension.
 \STATE {\textbf{Step 2: }}Calculate the source grid function $f=\{f(x_j)\mid x_j\in\Omega_h\}$ on $\Omega_h$.
 \STATE {\textbf{Step 3: }}Compute the first row of matrix $A$ by combining \cref{alg:1} with \eqref{eq:2.1} and \eqref{eq:3.32}.
 \STATE {\textbf{Step 4: }}Solve $AU^h=f$ using the Preconditioned Conjugate Gradient (PCG) method, as implemented in Ref. \cite{HZD2021}.
  \end{algorithmic}
\end{algorithm}
\section{Numerical accuracy}
In this section, we assess the errors and convergence rates of the proposed methods from three aspects:
(1) the approximation of the coefficients $a_{j}^{(\alpha,\lambda)}$ in \eqref{eq:12.15}; (2) the discretization of the TFL $(-\Delta)_{\lambda}^{\frac{\alpha}{2}}$ \eqref{eq:4}; and (3) the numerical solution of the TFL equations.

 For the HFD schemes \eqref{eq:2.14} with $p=4, 6, 8$, we compute the coefficients in \cref{alg:1} using $N_G=20$ Gauss-Legendre quadrature points for all numerical experiments. These experiments are conducted using MATLAB R2020b (64-bit) on a personal computer equipped with a 2.3 GHz processor and 16 GB of RAM.
 \subsection{The approximation accuracy for the coefficients $a_j^{(\alpha,h\lambda)}$}
This subsection examines the errors and convergence rates of the coefficients $a_j^{(\alpha,h\lambda)}$ in \eqref{eq:12.15}. In practice, we compute the discrete Fourier transforms $\tilde{a}_j^{(\alpha,h\lambda)}$ \cref{alg:1} via \texttt{fftn} to approximate $a_j^{(\alpha,h\lambda)}$.
\cref{tab1,tab2,tab3} summarize the mean squared error $e(N_f)$ and the spectral precision of the approximate coefficients $\tilde{a}_j^{(\alpha,h\lambda)}$ for $p=4, 6, 8$. The mean squared error $e(N_f)$ is defined as
\[
e(N_f) = \left(\frac{1}{(N_f)^d}\sum_{l=1}^{d}\sum_{j_l=0}^{N_f-1}\left|\tilde{a}_{j}^{(\alpha,h\lambda)}(N_f)-\tilde{a}_j^{(\alpha,h\lambda)}(2N_f)\right|^2\right)^{1/2},
\]
where $j=(j_1,\ldots, j_d)$; the Fourier resolution
$N_f$ takes the successive values $\{2^6, 2^7, 2^8, 2^9, 2^{10}\}$; the fractional power $\alpha$ is selected from $ \{0.4, 0.8, 1.2, 1.8\}$; and the remaining parameters are fixed: dimension $d=2$, spatial step size $h=1/32$ and tempering parameter $\lambda=0.5$.
\begin{table}[H]
\centering
  \renewcommand{\arraystretch}{1.15} 
    \setlength{\tabcolsep}{8pt}
\caption{Numerical error $e(N_f)$ and convergence rate of $\tilde{a}_j^{(\alpha,h\lambda)}$ via \texttt{fft2} for $p=4$.}
\label{tab1}
\begin{tabular}{cccccccccc}
\toprule
\multirow{2}{*}{$N_f$} &
\multicolumn{2}{c}{$\alpha=0.4$} &
\multicolumn{2}{c}{$\alpha=0.8$} &
\multicolumn{2}{c}{$\alpha=1.2$}&
\multicolumn{2}{c}{$\alpha=1.8$}\\
\cmidrule(lr){2-3} \cmidrule(lr){4-5} \cmidrule(lr){6-7} \cmidrule(lr){8-9}
 & $e(N_f)$ & rate & $e(N_f)$ & rate & $e(N_f)$ & rate& $e(N_f)$ &rate \\
\midrule
$2^6$ & 6.29e-05 & * & 8.69e-06  & * & 2.24e-06  & * & 3.58e-07  & * \\
$2^7$ & 8.30e-06   &  2.92  &   9.11e-07  & 3.25 & 1.84e-07 &  3.60& 2.02e-08 & 4.15 \\
$2^8$ & 5.88e-07    &   3.82&  5.07e-08 & 4.17  & 8.02e-09 &  4.52 &  6.02e-10 &  5.07\\
$2^9$ & 1.45e-08 &  5.34   &   9.73e-10 &   5.70& 1.19e-10 &  6.07 & 6.09e-12 & 6.63\\
$2^{10}$ &4.75e-11  &  8.25  & 2.47e-12 &  8.62  &  2.34e-13 &  9.00 & 8.07e-15 & 9.56 \\
\bottomrule
\end{tabular}
\end{table}
\begin{table}[H]
\centering
  \renewcommand{\arraystretch}{1.15} 
    \setlength{\tabcolsep}{8pt}
\caption{Numerical error $e(N_f)$ and convergence rate of $\tilde{a}_j^{(\alpha,h\lambda)}$ via \texttt{fft2} for $p=6$}
\label{tab2}
\begin{tabular}{cccccccccc}
\toprule
\multirow{2}{*}{$N_f$} &
\multicolumn{2}{c}{$\alpha=0.4$} &
\multicolumn{2}{c}{$\alpha=0.8$} &
\multicolumn{2}{c}{$\alpha=1.2$}&
\multicolumn{2}{c}{$\alpha=1.8$}\\
\cmidrule(lr){2-3} \cmidrule(lr){4-5} \cmidrule(lr){6-7} \cmidrule(lr){8-9}
 & $e(N_f)$ & rate & $e(N_f)$ & rate & $e(N_f)$ & rate& $e(N_f)$ & rate \\
\midrule
$2^6$ & 6.44e-05 & * & 8.89e-06 & * & 2.29e-06  & * & 3.67e-07   & * \\
$2^7$ & 8.41e-06  &   2.94  &    9.26e-07  & 3.26 &1.88e-07  &  3.61  &   2.07e-08     &  4.15  \\
$2^8$ & 5.98e-07    &    3.81   &  5.16e-08 & 4.16  &  8.17e-09 &   4.52   &   6.13e-10   &  5.08\\
$2^9$ & 1.50e-08  &   5.32     &  1.01e-09  &    5.68 &  1.24e-10& 6.04  &   6.34e-12  &  6.60 \\
$2^{10}$ & 4.76e-11   &  8.30  & 2.47e-12  &  8.68    &  2.34e-13 &  9.05& 8.07e-15 & 9.62  \\
\bottomrule
\end{tabular}
\end{table}
\begin{table}[H]
\centering
  \renewcommand{\arraystretch}{1.15} 
    \setlength{\tabcolsep}{8pt}
\caption{Numerical error $e(N_f)$ and convergence rate of $\tilde{a}_j^{(\alpha,h\lambda)}$ via \texttt{fft2} for $p=8$}
\label{tab3}
\begin{tabular}{ccccccccccc}
\toprule
\multirow{2}{*}{$N_f$} &
\multicolumn{2}{c}{$\alpha=0.4$} &
\multicolumn{2}{c}{$\alpha=0.8$} &
\multicolumn{2}{c}{$\alpha=1.2$}&
\multicolumn{2}{c}{$\alpha=1.8$}\\
\cmidrule(lr){2-3} \cmidrule(lr){4-5} \cmidrule(lr){6-7} \cmidrule(lr){8-9}
 & $e(N_f)$ & rate & $e(N_f)$ & rate & $e(N_f)$ & rate& $e(N_f)$ &rate \\
\midrule
$2^6$ & 6.46e-05  & * & 8.94e-06   & * &  2.31e-06   & * & 3.69e-07   & * \\
$2^7$ &  8.46e-06  &   2.93  &    9.34e-07   & 3.26 & 1.90e-07 &  3.60   &    2.10e-08     &  4.14  \\
$2^8$ & 6.06e-07   &    3.80   & 5.23e-08  & 4.16  &   8.28e-09&   4.52   &   6.23e-10&  5.07\\
$2^9$ & 1.52e-08 &  5.32      &  1.02e-09  &    5.68 &  1.25e-10& 6.05  &   6.41e-12&  6.60 \\
$2^{10}$ &  4.80e-11      &  8.30  & 2.49e-12  &  8.68    &  2.36e-13 &  9.05&  8.15e-15  & 9.62  \\
\bottomrule
\end{tabular}
\end{table}
\subsection{The approximation accuracy for the TFL $(-\Delta)_{\lambda}^{\frac{\alpha}{2}}$}

To verify the error bound of the TFL as established in \cref{T2.1},
\cref{exa1} employs \cref{alg:1} (with a Fourier discretization resolution of $N_f=2^{14}$) to compute $(-\Delta_h)_{\lambda}^{\frac{\alpha}{2}}U$ for various spatial step sizes $h$ in the $d=2$ case. Given that an analytical expression for the TFL $(-\Delta)_{\lambda}^{\frac{\alpha}{2}}$ is generally unavailable, we evaluate the errors $e_{\infty}(h)$ and $e_{l^2}(h)$ to investigate their corresponding convergence rates
$\log_2\left(\frac{e_{\infty}(2h)}{e_{\infty}(h)}\right)$ and $\log_2\left(\frac{e_{l^2}(2h)}{e_{l^2}(h)}\right)$, where
 $$e_{\infty}(h)=\left\|(-\Delta_h)_{\lambda}^{\frac{\alpha}{2}}U-\left(-\Delta_{h/2}\right)_{\lambda}^{\frac{\alpha}{2}}U\right\|_{l^{\infty}(\Omega)},\ \ e_{l^2}(h)=\left\|(-\Delta_h)_{\lambda}^{\frac{\alpha}{2}}U-\left(-\Delta_{h/2}\right)_{\lambda}^{\frac{\alpha}{2}}U\right\|_{l^{2}(\Omega)}.$$
\begin{example}\label{exa1}
Consider the function  $$
u(x_1,x_2)=\left[\left(1-x_1^2\right)_{+}\left(1-x_2^2\right)_{+}\right]^{s}, \ \ (x_1,x_2)\in \mathbb{R}^2,\ \ s>0,$$ where $a_+=\max\{a,0\}$.
\end{example}
 As shown in \cref{tab4,tab5,tab6,tab7}, for $u\in \mathcal{B}^{s}(\mathbb{R}^2)$, the HFD schemes of orders $p=4, 6, 8$ for the TFL achieve the convergence rates of $O\left(h^{\min\{s-\alpha, p\}}\right)$ in the sense of $l^{\infty}$-norm error $e_{\infty}(h)$. These results correspond to the fixed tempering parameter $\lambda=0.5$, the regularity parameter $s=2, 3, 3.6, 6, 8, 10$, and the fractional power $\alpha=0.4, 1.8$.

For $s=5+\alpha, 2+\alpha$, the convergence rates in \cref{fig:1,fig:2} are $O\left(h^{\min\{s-\alpha, p\}}\right)$ for the $l^{\infty}$-norm error $e_{\infty}(h)$ and $O\left(h^{\min\{s-\alpha+1/2,\  p\}}\right)$ for the $l^2$-norm error $e_{l^2}(h)$. As shown, the HFD schemes with $p=4, 6, 8$ yield markedly lower errors than the $p=2$ benchmark scheme \cite{CDH2025}. We note that the dependence of the error on the order $p$ varies with $s$.  Specifically, for the higher-regularity case (e.g., $s=5+\alpha$), the error decreases as $p$ increases from 4 to 8. In contrast, for the moderate-regularity case (e.g., $s=2+\alpha$), the error increases as $p$ increases from 4 to 8.
Overall, the simulation results are consistent with \cref{T2.1}.

To visualize $(-\Delta)_{\lambda}^{\frac{\alpha}{2}}u(x_1,x_2)$, we compute $(-\Delta_h)_{\lambda}^{\frac{\alpha}{2}}U$ with $U=\{u_j\}_{x_j\in\Omega_h}$ as the reference solution using a fine grid $h=2^{-9}$. \cref{fig:3} displays the grid function plots of the TFL for $s=2+\alpha$ and $s=5+\alpha$ with $\alpha= 0.4$. These plots illustrate the nonlocal property of the TFL by showing that $(-\Delta)_{\lambda}^{\frac{\alpha}{2}}u(x_1,x_2)$ is nonzero even though $u(x_1,x_2)\equiv0$ for $(x_1,x_2)\in \mathbb{R}^2\backslash\Omega$.
  \begin{table}[H]
  \renewcommand{\arraystretch}{1.15} 
    \setlength{\tabcolsep}{8pt}
\centering
\begin{threeparttable}
\caption{Numerical accuracy in \cref{exa1}\ ($\alpha=0.4,\ s=2, 3, 3.6$).}
\label{tab4}
\begin{tabular}{cccccccc}
\toprule
\multirow{2}{*}{$h$} &
\multicolumn{2}{c}{$p=4,\ s=2$} &
\multicolumn{2}{c}{$p=6,\ s=3$} &
\multicolumn{2}{c}{$p=8,\ s=3.6$}\\
\cmidrule(lr){2-3} \cmidrule(lr){4-5} \cmidrule(lr){6-7}
 & $e_{{\infty}}(h)$ & rate & $e_{{\infty}}(h)$ & rate & $e_{{\infty}}(h)$ & rate \\
\midrule
$2^{-5}$ &  9.42e-05  & * &  8.41e-06 & * &   2.36e-06  & * \\
$2^{-6}$ & 3.07e-05 & 1.62 & 1.34e-06    & 2.65 &  2.46e-07 & 3.26   \\
$2^{-7}$ & 1.01e-05 &   1.61  & 2.17e-07  &  2.62  & 2.62e-08   &  3.23 \\
$2^{-8}$ &  3.31e-06 & 1.60   & 3.55e-08 &  2.61
 &  2.84e-09 & 3.21  \\
\bottomrule
\end{tabular}
\end{threeparttable}
\end{table}
\begin{table}[H]
\renewcommand{\arraystretch}{1.15} 
    \setlength{\tabcolsep}{8pt}
\centering
\begin{threeparttable}
\caption{Numerical accuracy in \cref{exa1}\ ($\alpha=1.8,\ s=2, 3, 3.6$).}
\label{tab5}
\begin{tabular}{cccccccc}
\toprule
\multirow{2}{*}{$h$} &
\multicolumn{2}{c}{$p=4,\ s=2$} &
\multicolumn{2}{c}{$p=6,\ s=3$} &
\multicolumn{2}{c}{$p=8,\ s=3.6$}\\
\cmidrule(lr){2-3} \cmidrule(lr){4-5} \cmidrule(lr){6-7}
 & $e_{{\infty}}(h)$ & rate & $e_{{\infty}}(h)$ & rate & $e_{{\infty}}(h)$ & rate \\
\midrule
$2^{-5}$ &   3.69e-01  & * &  1.89e-02  & * &   5.65e-03    & * \\
$2^{-6}$ &  3.16e-01   &  0.22 & 8.31e-03     &  1.19  &  1.59e-03  & 1.83   \\
$2^{-7}$ & 2.73e-01 &   0.21   & 3.63e-03 &   1.19  & 4.50e-04    &   1.82   \\
$2^{-8}$ &  2.37e-01  &  0.21   & 1.58e-03 &  1.20
 & 1.28e-04 &1.81  \\
\bottomrule
\end{tabular}
\end{threeparttable}
\end{table}
\begin{table}[H]
\renewcommand{\arraystretch}{1.15} 
    \setlength{\tabcolsep}{8pt}
\centering
\begin{threeparttable}
\caption{Numerical accuracy in \cref{exa1}\ ($\alpha=0.4,\ s=6, 8, 10$).}
\label{tab6}
\begin{tabular}{cccccccc}
\toprule
\multirow{2}{*}{$h$} &
\multicolumn{2}{c}{$p=4,\ s=6$} &
\multicolumn{2}{c}{$p=6,\ s=8$} &
\multicolumn{2}{c}{$p=8,\ s=10$}\\
\cmidrule(lr){2-3} \cmidrule(lr){4-5} \cmidrule(lr){6-7}
 & $e_{{\infty}}(h)$ & rate & $e_{{\infty}}(h)$ & rate & $e_{{\infty}}(h)$ & rate \\
\midrule
$2^{-3}$ &   2.23e-03  & * &   6.98e-04   & * &    3.77e-04    & * \\
$2^{-4}$ & 1.46e-04  &   3.94 & 1.24e-05     &   5.81        &  1.97e-06  &  7.58   \\
$2^{-5}$ & 9.21e-06 &  3.98   & 2.01e-07   &    5.95    &  8.28e-09    &  7.89   \\
$2^{-6}$ &  5.77e-07  & 4.00   &  3.16e-09 &  5.99
 &  3.28e-11  & 7.98  \\
\bottomrule
\end{tabular}
\end{threeparttable}
\end{table}
\begin{table}[H]
\renewcommand{\arraystretch}{1.15} 
    \setlength{\tabcolsep}{8pt}
\centering
\begin{threeparttable}
\caption{Numerical accuracy in \cref{exa1}\ ($\alpha=1.8,\ s=6, 8, 10$).}
\label{tab7}
\begin{tabular}{cccccccc}
\toprule
\multirow{2}{*}{$h$} &
\multicolumn{2}{c}{$p=4,\ s=6$} &
\multicolumn{2}{c}{$p=6,\ s=8$} &
\multicolumn{2}{c}{$p=8,\ s=10$}\\
\cmidrule(lr){2-3} \cmidrule(lr){4-5} \cmidrule(lr){6-7}
 & $e_{{\infty}}(h)$ & rate & $e_{{\infty}}(h)$ &rate & $e_{{\infty}}(h)$ & rate\\
\midrule
$2^{-3}$ &    1.27e-01    & * &   5.57e-02    & * &    3.94e-02   & * \\
$2^{-4}$ & 8.29e-03 &   3.93   & 9.96e-04     &   5.81        &   2.07e-04 &    7.57         \\
$2^{-5}$ &  5.24e-04  &  3.98   & 1.61e-05   &    5.95    &   8.71e-07   &  7.89   \\
$2^{-6}$ &  3.29e-05  & 4.00   &  2.53e-07   &  5.99
 &   3.45e-09   & 7.98  \\
\bottomrule
\end{tabular}
\end{threeparttable}
\end{table}
\begin{figure}[H]
\centering
\includegraphics[width=0.39\linewidth]{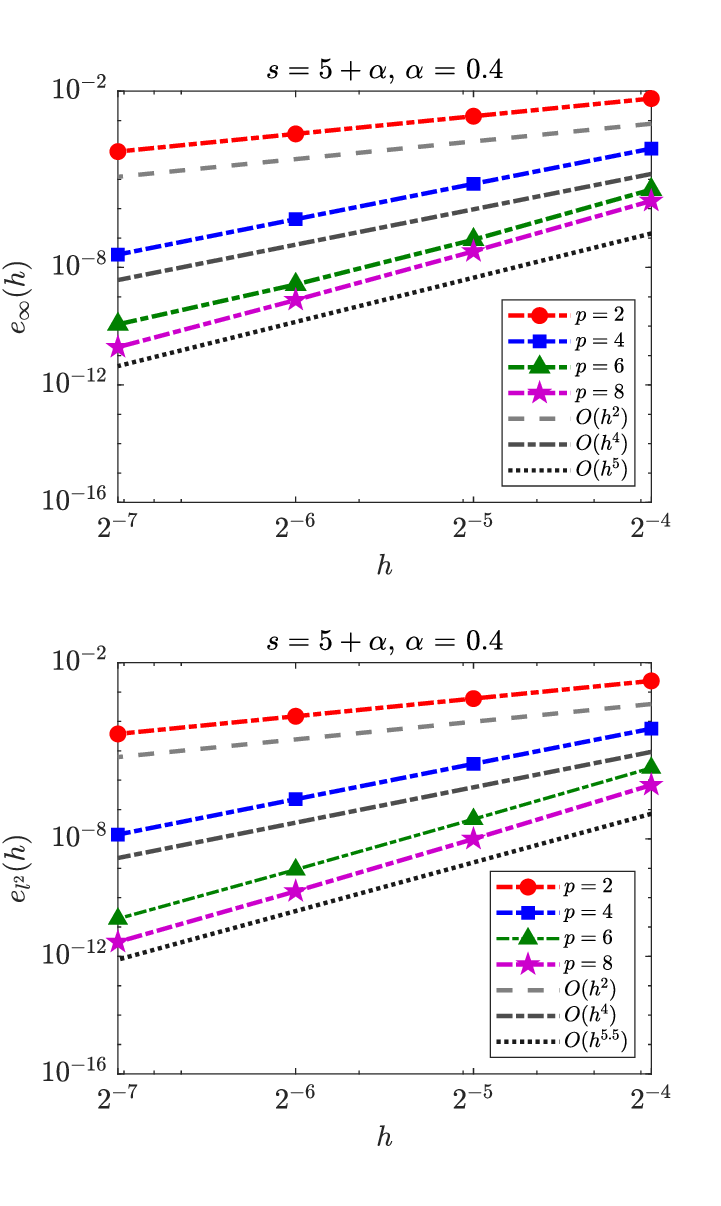}
\includegraphics[width=0.39\linewidth]{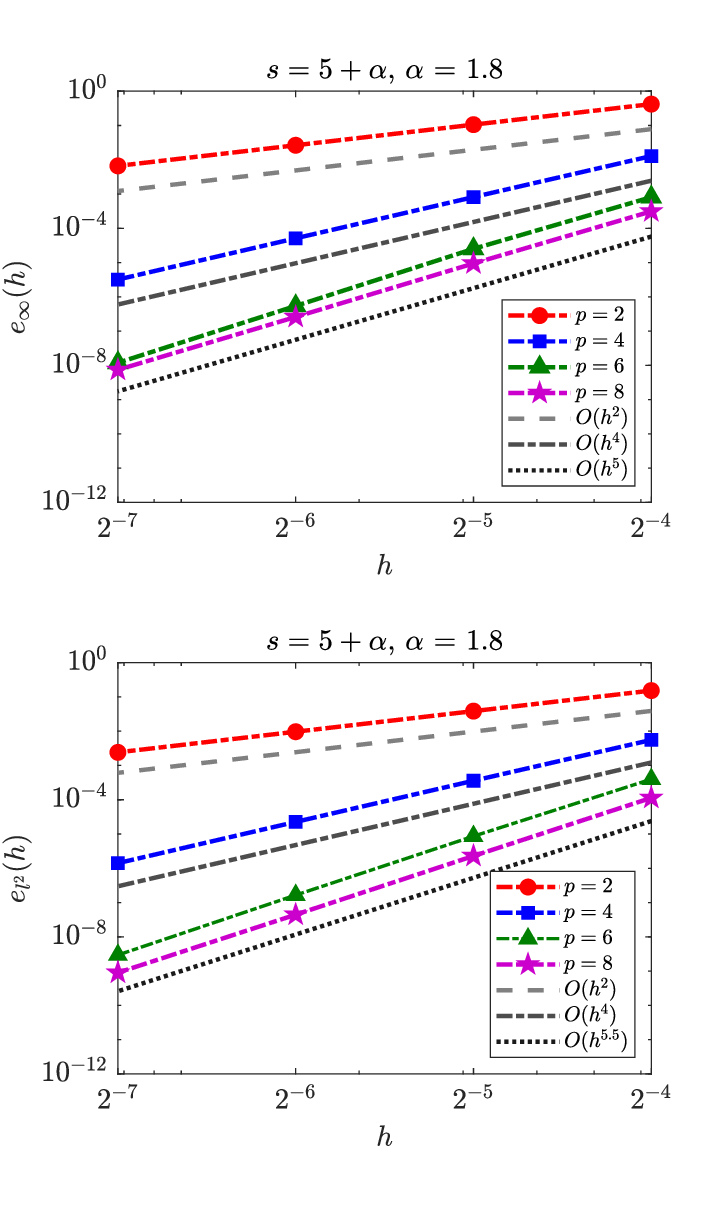}
\vspace{-0.88cm}
\caption{Numerical errors $e_{\infty}(h)$ and $e_{l^2}(h)$ of $(-\Delta_h)_{\lambda}^{\frac{\alpha}{2}}u(x_1,x_2)$ with $s=5+\alpha$ in \cref{exa1}.}
\label{fig:1}
\end{figure}
\vspace{-0.8cm}
\begin{figure}[H]
\centering
\includegraphics[width=0.39\linewidth]{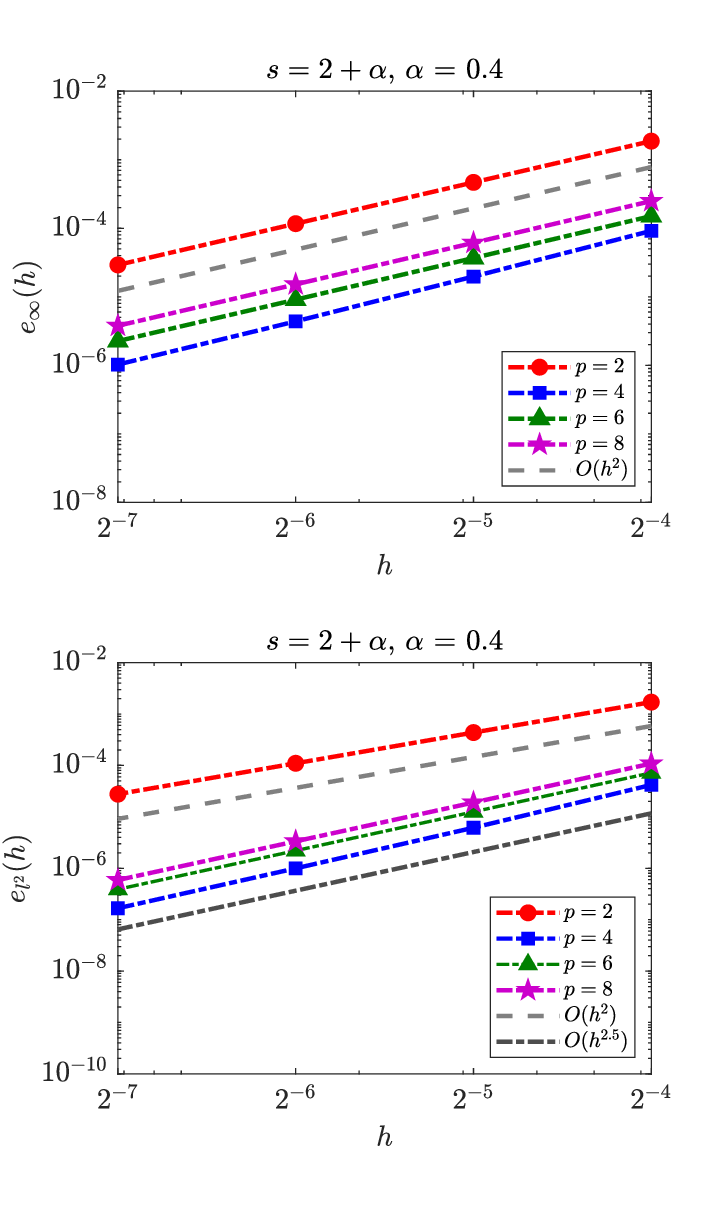}
\includegraphics[width=0.39\linewidth]{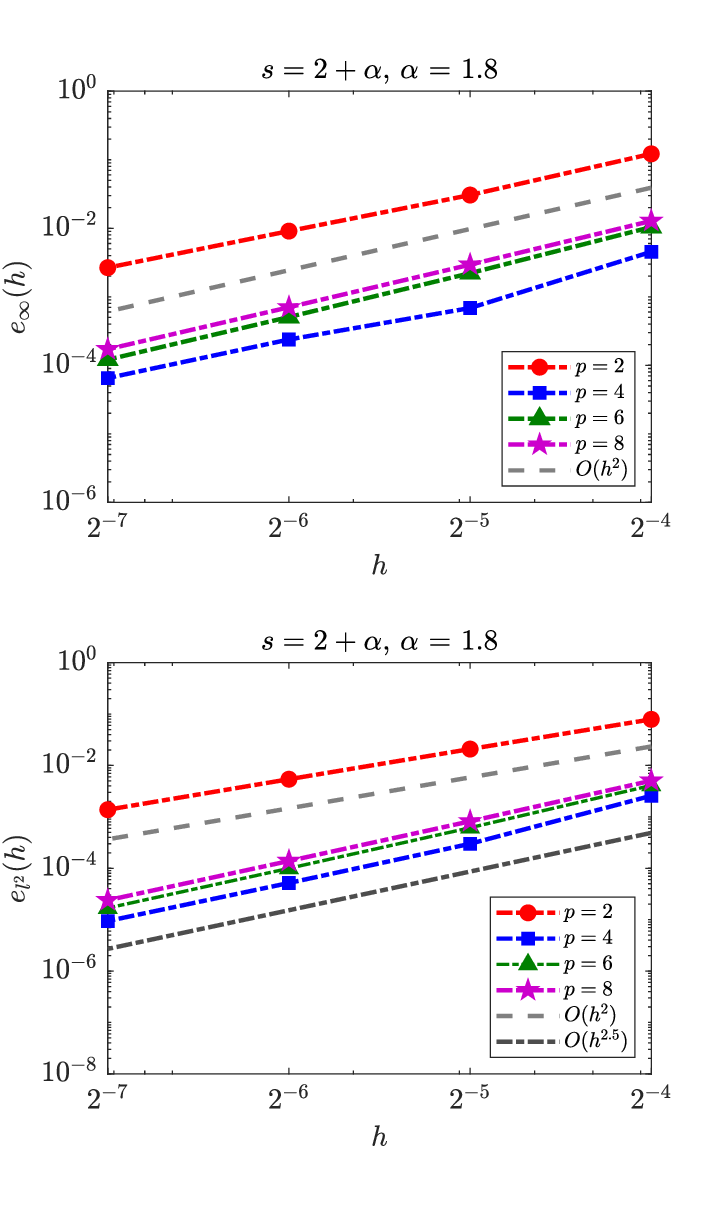}
\vspace{-0.88cm}
\caption{Numerical errors $e_{\infty}(h)$ and $e_{l^2}(h)$ of $(-\Delta_h)_{\lambda}^{\frac{\alpha}{2}}u(x_1,x_2)$ with $s=2+\alpha$ in \cref{exa1}.}
\label{fig:2}
\end{figure}

\begin{figure}[H]
\centering
\includegraphics[width=0.4\linewidth]{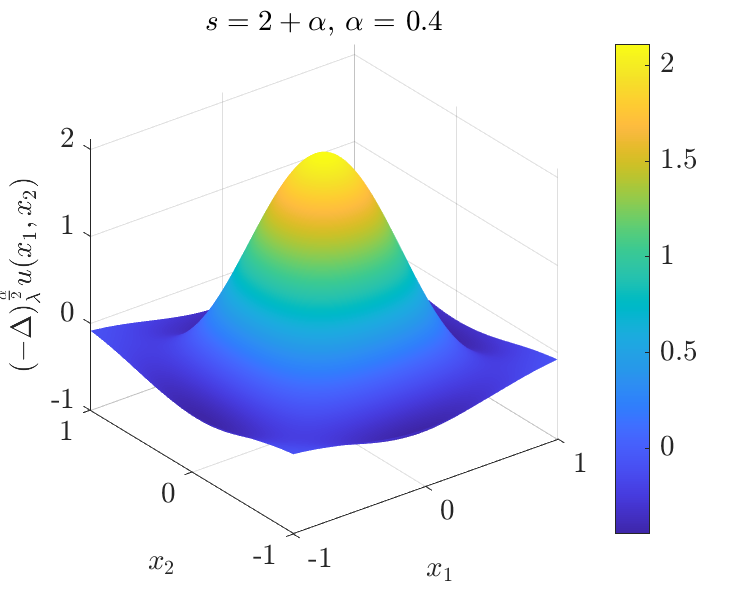}
\includegraphics[width=0.4\linewidth]{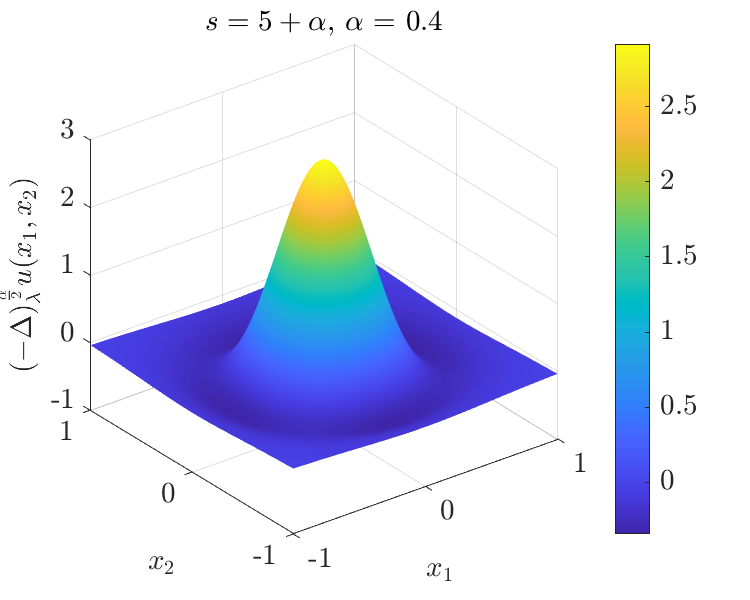}
\caption{TFL $(-\Delta)_{\lambda}^{\frac{\alpha}{2}}u(x_1,x_2)$ in \cref{exa1} ($\alpha=0.4,\ s=2+\alpha, 5+\alpha$).}
\label{fig:3}
\end{figure}
\subsection{The numerical approximations for the TFL equations}
In this subsection, we test the performance of the HFD methods with $p=4, 6, 8$ for the TFL equations. To compute $U^h$ in \cref{alg:2}, we use $N_f=2^{20}$ for $d=1$, $N_f=2^{14}$ for $d=2$, with a PCG tolerance of $10^{-16}$ and a zero initial guess. We measure the $l^\infty$-norm and $l^2$-norm errors between the numerical solution $U^h$ and the exact solution grid function $U$ to verify the theoretical results derived in \cref{d3.1}.
\begin{example}\label{exa2}(The TFL equation for $d=1$)\cite{DZ2019JSC}. For $x\in \mathbb{R}$ and $s>0$, the function $u(x)=\left(1-x^2\right)_{+}^s$ satisfies the following equation
\begin{align}\label{eq:5.1}
(-\Delta)_{\lambda}^{\frac{\alpha}{2}}u(x)&=f(x),\ \ x\in\Omega=(-1,1),\nonumber\\
u(x)&=0,\ \ x\in \Omega^c\equiv \mathbb{R}\backslash \Omega.
\end{align}
\end{example}
 In \cref{exa2}, the exact solution $u\in \mathcal{B}^s(\mathbb{R})$ is known, whereas the source term $f(x)$ is unknown. We approximate the grid function $f=\{f_j\}_{x_j\in\Omega_h}$ by computing $(-\Delta_h)_{\lambda}^{\frac{\alpha}{2}}U$ with $U=\{u_j\}_{x_j\in\Omega_h}$, using a fine spatial step size $h=2^{-12}$. The $l^\infty$-norm and $l^2$-norm numerical errors are defined as 
$E_{\infty}(h)=\left\|U-U^h\right\|_{l^{\infty}}$ and $E_{l^2}(h)=\left\|U-U^h\right\|_{l^{2}}.$ To illustrate the influence of the tempering parameter $\lambda$ on the numerical accuracy, \cref{fig:4}
 compares the $l^{\infty}$-norm errors for different $\lambda$, using $\alpha=0.4, 1.8$, $s=4+\alpha$, and the HFD method with $p=4$ for the TLF equation. This figure shows that the errors decrease as $\lambda$ decreases for small $\alpha$, but for large $\alpha$, they are insensitive to $\lambda$. As shown, the observed convergence rate is almost independent of both $\alpha$ and $\lambda$, and remains consistently of order $O(h^4)$.

In measuring the pointwise errors $\left|e_u^h\right|=\left|u(x)-u^h(x)\right|,\ x\in \Omega,$ with $\lambda=0.5,\ \alpha=0.4,\ s=6$ and $h=2^{-5}$, the left panel of \cref{fig:5} demonstrates that the scheme of $p=4$ yields significantly lower pointwise errors than the scheme of $p=2$ from Ref. \cite{CDH2025} at $P=3000$ interior nonmesh points. The numerical solution at these points is computed via \text{sinc} interpolation
$$I_Pu(y)=\sum\limits_{{x_j}\in \Omega_h}\text{sinc}\left(\frac{y-x_j}{h}\right)u^h_j=\sum\limits_{{x_j}\in \Omega_h}\frac{\sin\left(\pi(y-x_j)/{h}\right)}{\pi(y-x_j)/{h}}u^h_j,\ \ y\in\Omega.$$
The right panel presents the $l^2$-norm errors at $P=3000$ interior nonmesh points versus the spatial step size $h$ for $\lambda=0.5$, $\alpha=1.8$ and $s=6$. It is observed that for $s=6$ the schemes with $p=4, 6, 8$ achieve a convergence rate of $O\left(h^{\min\{s,p\}}\right)$, while the scheme with $p=2$ does not exceed second order. The $l^2$-norm error is computed as
$$E_{l^2}(h)=\left(\frac{1}{P}\sum\limits_{j=1}\limits^{P}{\left|u(x_j)-u^{h_P}(x_j)\right|^2}\right)^{1/2},\ \Omega_{h_P}=\left\{x_j\ |\ \ x_j = -1+jh_P,\ j=1,\ldots,P,\ h_P=\frac{2}{P+1}\right\}.$$
As shown, for the high-regularity solutions $\left(e.g., u\in \mathcal{B}^6(\mathbb{R})\right)$, the  schemes with $p=4, 6, 8$ clearly outperform the scheme with $p=2$ in Ref. \cite{CDH2025}. For the lower-regularity solutions $\left(e.g., u\in \mathcal{B}^1(\mathbb{R})\right)$, \cref{fig:6} reveals that the numerical errors of the scheme with $p=2$ are slightly lower than those of the schemes with $p=4, 6, 8$ for $\lambda=0.5$ and $\alpha=0.4, 1.8$.
\begin{figure}[H]
\centering
\includegraphics[width=0.43\linewidth]{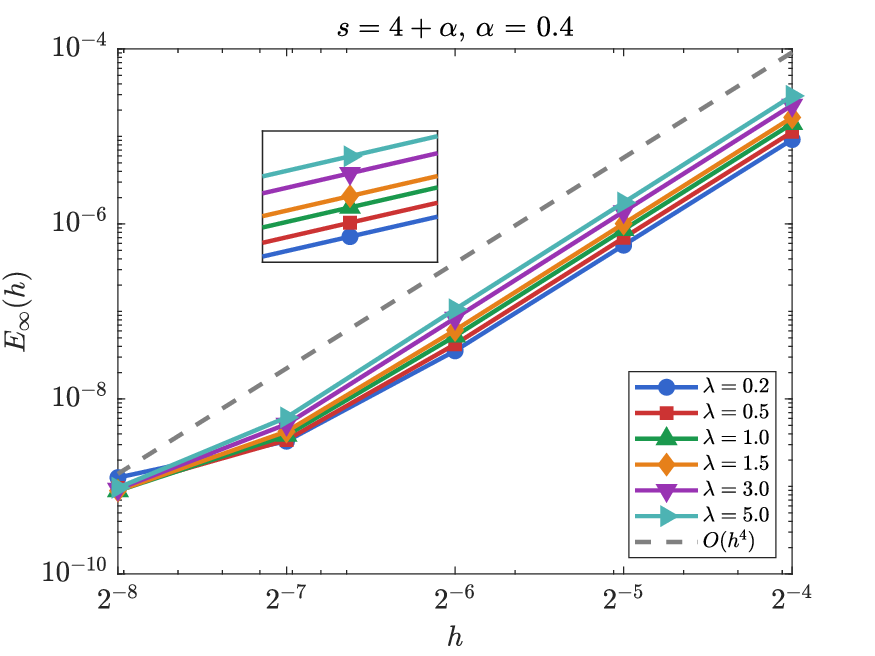}
\includegraphics[width=0.43\linewidth]{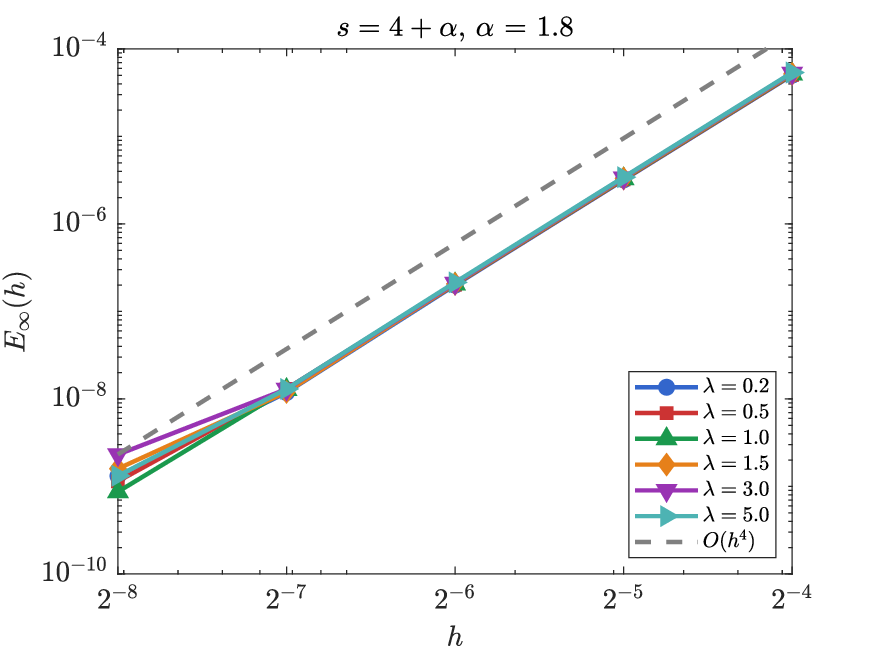}
\caption{Numerical error $E_{\infty}(h)$ in \cref{exa2} ($\lambda= 0.2, 0.5, 1.0, 1.5, 3.0, 5.0,\ \alpha=0.4, 1.8).$}
\label{fig:4}
\end{figure}
\begin{figure}[H]
\centering
\includegraphics[width=0.43\linewidth]{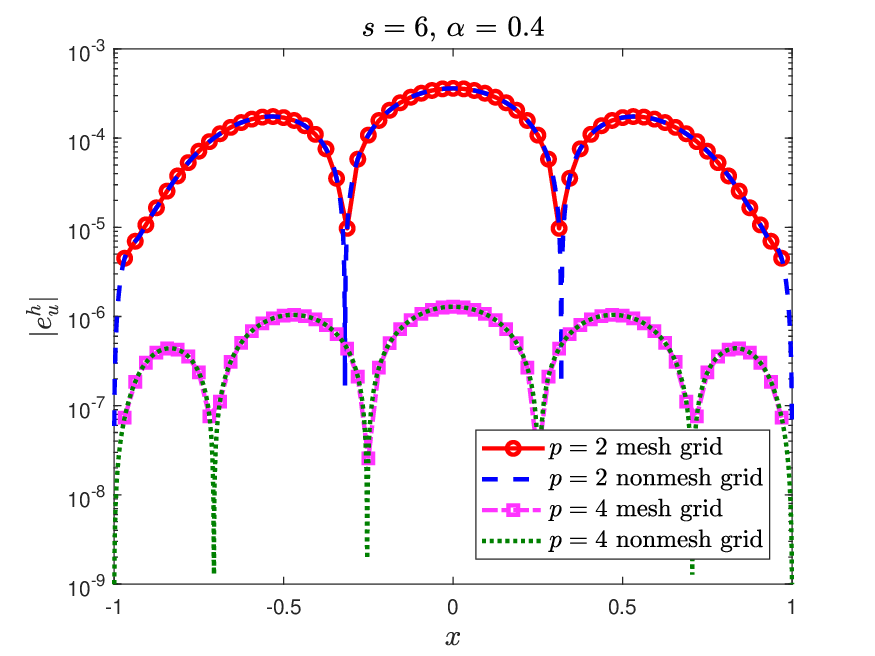}
\includegraphics[width=0.43\linewidth]{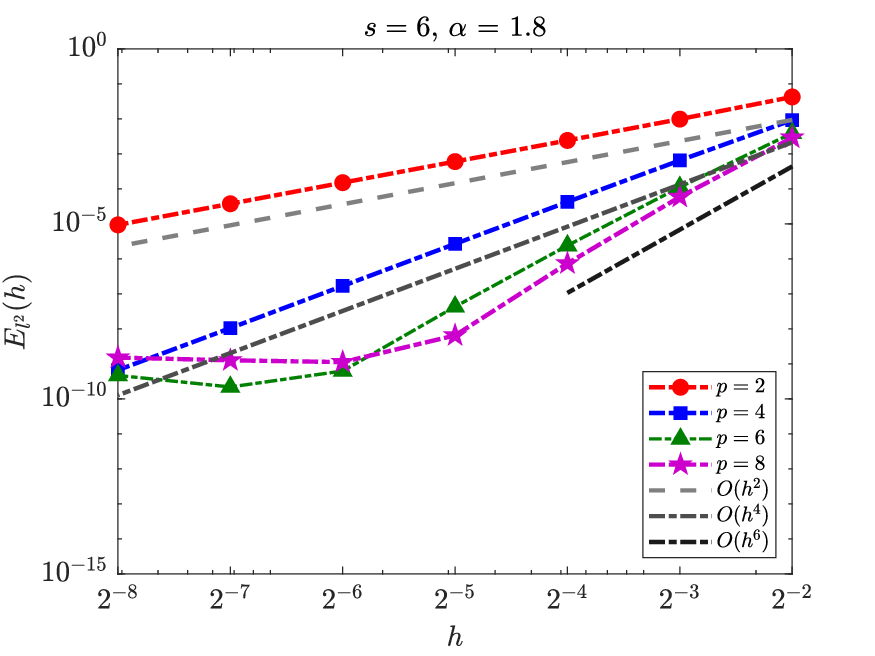}
\caption{Numerical accuracy of nonmesh points in \cref{exa2} ($\lambda = 0.5,\ \alpha = 0.4,\ 1.8,\ s = 6$).}
\label{fig:5}
\end{figure}
\begin{figure}[H]
\centering
\includegraphics[width=0.43\linewidth]{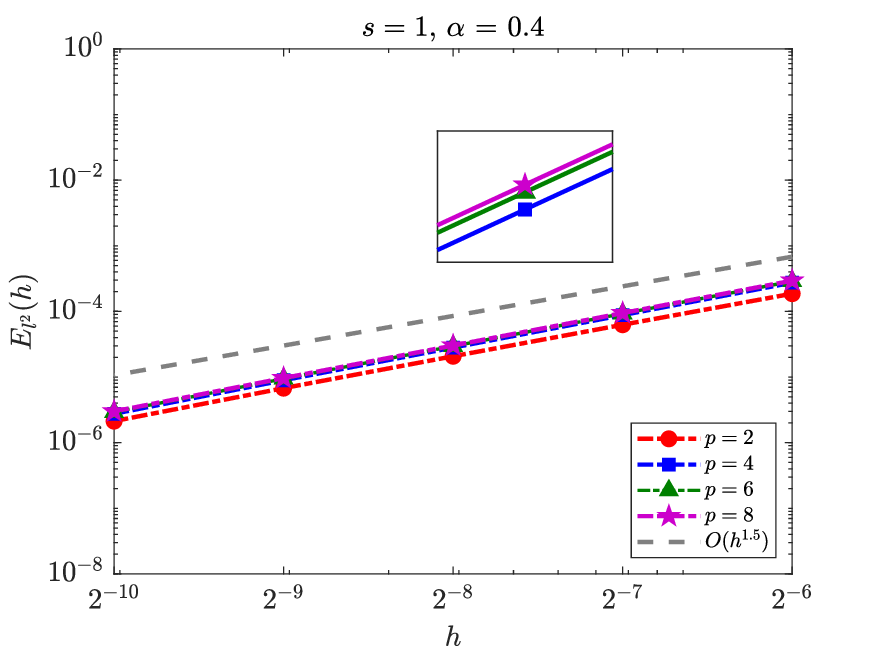}
\includegraphics[width=0.43\linewidth]{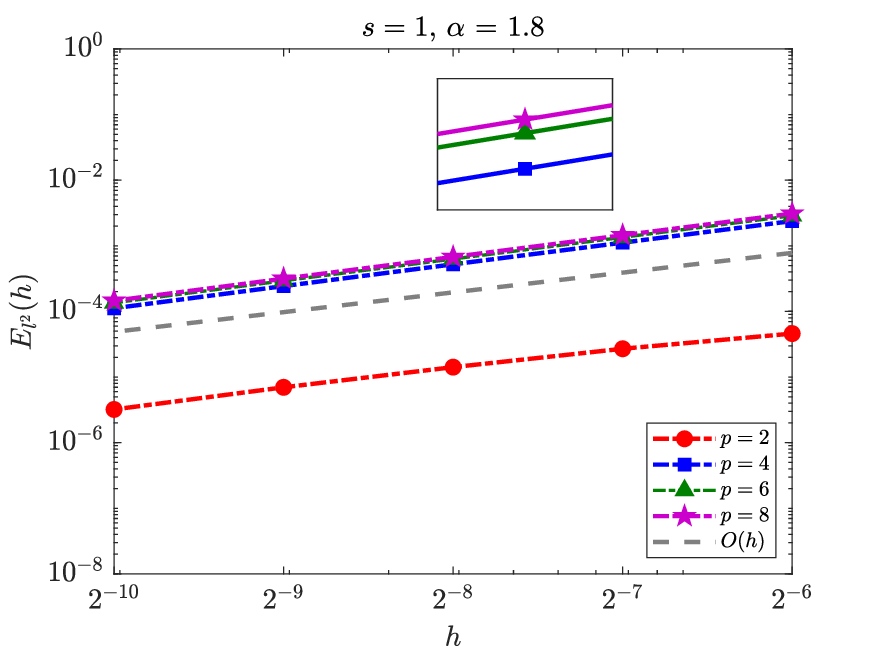}
\caption{Numerical accuracy in \cref{exa2} ($\lambda=0.5,\ \alpha=0.4, 1.8,\ s=1$).}
\label{fig:6}
\end{figure}
\begin{example}\label{exa3}(The TFL equations for $d=2$)\cite{CDH2025}. We solve the stochastic differential equations driven by $\alpha$-stable L$\acute{\text{e}}$vy noise
\begin{align}\label{eq:5.1}
(-\Delta)_{\lambda}^{\frac{\alpha}{2}}u(x)-\sigma\Delta u(x)+\nu u(x)&=f(x),\ \ x\in\Omega=[0,1]^2,\nonumber\\
u(x)&=0,\ \ x\in \Omega^c\equiv \mathbb{R}^2\backslash \Omega.
\end{align}
\end{example}
For Case 1, the exact solution is given by $u(x)=\left[\left(1-x_1^2\right)_{+}\left(1-x_2^2\right)_{+}\right]^{s}$, where $x=(x_1,x_2)\in \mathbb{R}^2$, $s>0$. The corresponding source term $f(x)$ lacks an explicit form. Using a fine spatial step size $h=2^{-9}$, we approximate the grid function $f=\{f_j\}_{x_j\in\Omega_h}$ by $\tilde{f}=(-\Delta_h)_{\lambda}^{\frac{\alpha}{2}}U-\sigma\Delta_hU+\nu U$, where $U=\{u_j\}_{x_j\in\Omega_h}$. Analogous to the $l^2$-norm error in \cref{exa1}, the $l^2$-norm error of the numerical solution is defined as $e_{l^2}(h)=\left\|U^h-U^{h/2}\right\|_{l^{2}}$.
With $\sigma = 0,\ \nu = 1$, $\lambda=0.5$ and $u\in\mathcal{B}^{s}(\mathbb{R}^2)$ ($s=2, 6$), \cref{tab8} illustrates that the scheme with $p=4$ for the TFL equation yields higher than expected accuracy for small $\alpha$. For large $\alpha$, the convergence rate is $O\left(h^{\min\{s,p\}}\right)$. Specifically,
 when $s<p$, the observed convergence rate is higher than the theoretical estimate $O\left(h^{\min\{s-\alpha,p\}}\right)$ derived in \cref{d3.1};
when $s>p$, the $l^2$-norm error converges at the optimal rate $O(h^p)$, which is consistent with \cref{d3.1}.
Analogous results for the scheme with $p=6$ and the scheme with $p=8$ are presented in \cref{tab9} and \cref{tab10}, respectively.

For Case 2, where $\sigma = 0$, $\nu =0$ and the source term $f(x)=1$, the exact solution is unknown. \cref{tab11} shows the results of the scheme with $p=4$ for the TFL equation.
 For small $\alpha$, both the $l^2$-norm errors and convergence rates decrease slightly as $\lambda$ decreases, while their differences are negligible.
For large $\alpha$, these errors and convergence rates are nearly identical across different $\lambda$, and the observed rates follow $O\left(h^{\min\{1,(\alpha+1)/2\}}\right)$.
     Overall, the dependence of the numerical error on both $\lambda$ and $\alpha$ aligns with the trends observed in \cref{exa2}.

For Case 3, where $\sigma=1,\ \nu=0,\ \lambda=0.5$ and the source term $f(x)=1$, there is no analytical solution. Given the low regularity of the solution, we adopt a central difference scheme for $-\Delta_h$ provided in \eqref{eq:3.1}: \begin{align*}-\Delta_h u(x)=\sum\limits_{l=1}^d\frac{1}{h^2}(-u(x-he_l)+2u(x)-u(x+he_l)).
\end{align*} As shown in \cref{fig:7}, when $\alpha$ is small (e.g., $\alpha=0.4$), the classical Laplacian dominates the governing operator, leading to a higher-regularity solution. The methods with $p=2, 4, 6, 8$ for the TFL equation all achieve a second-order convergence rate, with $l^2$-norm errors of comparable magnitude. When $\alpha$ is large (e.g., $\alpha=1.8$), the influence of the TFL is significantly enhanced, resulting in a low-regularity solution near the boundary and a convergence rate of $O(h)$. In conclusion, for high-regularity solutions, the schemes with $p=4, 6, 8$ for the TFL equations perform significantly better than the scheme with $p=2$. In contrast, for low-regularity solutions, the schemes with $p=4, 6, 8$ show no advantage over the scheme $p=2$  in terms of errors, and their errors are in fact comparable or even slightly larger.
\begin{table}[H]
\centering
\renewcommand{\arraystretch}{1.15} 
    \setlength{\tabcolsep}{8pt}
\caption{Numerical accuracy $O\left(h^{\min\{s, p\}}\right)$ of $e_{l^{2}}(h)$ in \cref{exa3}  ($p=4,\ \alpha=0.4, 1.8,\ s=2, 6$).}
\label{tab8}
\begin{tabular}{cccccccccc}
\toprule
\multirow{2}{*}{$h$} &
\multicolumn{2}{c}{$s=2,\ \alpha=0.4$} &
\multicolumn{2}{c}{$s=2,\ \alpha=1.8$} &
\multicolumn{2}{c}{$s=6,\ \alpha=0.4$}&
\multicolumn{2}{c}{$s=6,\ \alpha=1.8$}\\
\cmidrule(lr){2-3} \cmidrule(lr){4-5} \cmidrule(lr){6-7} \cmidrule(lr){8-9}
 & $e_{l^{2}}(h)$ & rate & $e_{l^{2}}(h)$ & rate & $e_{l^{2}}(h)$ & rate & $e_{l^{2}}(h)$ & rate \\
\midrule
$2^{-4}$ &   2.48e-05    &  *  &   8.27e-04  &  *&     1.16e-05  &  *  &     3.58e-05   & * \\
$2^{-5}$ &   3.72e-06      &  2.74 &   1.89e-04   &   2.13 &     7.33e-07   &  3.98  &    2.27e-06   &  3.98  \\
$2^{-6}$ &    5.91e-07     &  2.66 &     4.36e-05    &    2.12 &    4.60e-08    & 3.99   &    1.42e-07    &  3.99  \\
$2^{-7}$ &    9.71e-08   & 2.61 &      1.01e-05  & 2.11 &     2.88e-09  & 4.00  &     8.89e-09   & 4.00  \\
\bottomrule
\end{tabular}
\end{table}
\begin{table}[H]
\centering
\renewcommand{\arraystretch}{1.15} 
    \setlength{\tabcolsep}{8pt}
\caption{Numerical accuracy $O\left(h^{\min\{s, p\}}\right)$ of $e_{l^{2}}(h)$ in \cref{exa3}  ($p=6,\ \alpha=0.4, 1.8,\ s=3, 8$).}
\label{tab9}
\begin{tabular}{cccccccccc}
\toprule
\multirow{2}{*}{$h$} &
\multicolumn{2}{c}{$s=3,\ \alpha=0.4$} &
\multicolumn{2}{c}{$s=3,\ \alpha=1.8$} &
\multicolumn{2}{c}{$s=8,\ \alpha=0.4$}&
\multicolumn{2}{c}{$s=8,\ \alpha=1.8$}\\
\cmidrule(lr){2-3} \cmidrule(lr){4-5} \cmidrule(lr){6-7} \cmidrule(lr){8-9}
 & $e_{l^{2}}(h)$ & rate & $e_{l^{2}}(h)$ & rate & $e_{l^{2}}(h)$ & rate & $e_{l^{2}}(h)$ & rate \\
\midrule
$2^{-4}$ &    2.11e-06    &   *  &      4.10e-05   & * &       7.56e-07 &  *  &         2.41e-06   & *  \\
$2^{-5}$ &   1.70e-07    &    3.64 &     5.11e-06   &   3.00   &      1.23e-08    &   5.94   &     3.92e-08  &     5.94    \\
$2^{-6}$ &    1.43e-08      &  3.57   &    6.13e-07  &     3.06 &     1.94e-10  &  5.98  &    6.19e-10    &  5.98    \\
$2^{-7}$ &    1.22e-09   &  3.55   &    7.12e-08  & 3.11 &   3.06e-12   &    5.99     &  1.00e-11 &     5.95    \\
\bottomrule
\end{tabular}
\end{table}
\begin{table}[H]
\centering
\renewcommand{\arraystretch}{1.15} 
    \setlength{\tabcolsep}{8pt}
\caption{Numerical accuracy $O\left(h^{\min\{s, p\}}\right)$ of $e_{l^{2}}(h)$ in \cref{exa3}  ($p=8,\ \alpha=0.4, 1.8,\ s=3.6, 10$).}
\label{tab10}
\begin{tabular}{cccccccccc}
\toprule
\multirow{2}{*}{$h$} &
\multicolumn{2}{c}{$s=3.6,\ \alpha=0.4$} &
\multicolumn{2}{c}{$s=3.6,\ \alpha=1.8$} &
\multicolumn{2}{c}{$s=10,\ \alpha=0.4$}&
\multicolumn{2}{c}{$s=10,\ \alpha=1.8$}\\
\cmidrule(lr){2-3} \cmidrule(lr){4-5} \cmidrule(lr){6-7} \cmidrule(lr){8-9}
 & $e_{l^{2}}(h)$ & rate & $e_{l^{2}}(h)$ & rate & $e_{l^{2}}(h)$ & rate & $e_{l^{2}}(h)$ & rate \\
\midrule
$2^{-3}$ &   2.05e-05    & * &    3.08e-04   & * &     1.79e-05     & * &     5.83e-05    & * \\
$2^{-4}$ &    9.69e-07   &   4.40   &     2.32e-05    &    3.73  &       9.77e-08  &    7.52 &       3.19e-07  &   7.51    \\
$2^{-5}$ &   4.91e-08   &     4.30  &    1.75e-06 &   3.73  &    4.18e-10    &   7.87     &      1.36e-09    &  7.89  \\
$2^{-6}$ &     2.60e-09       &  4.24   &      1.33e-07  &   3.72&     1.68e-12  &     7.96  &   5.48e-12  &   7.96    \\
\bottomrule
\end{tabular}
\end{table}
\begin{table}[H]
\centering
\renewcommand{\arraystretch}{1.15} 
    \setlength{\tabcolsep}{8pt}
\caption{Numerical accuracy $O\left(h^{\min\{1, (\alpha+1)/2\}}\right)$ of $e_{l^2}(h)$ in \cref{exa3} ($p=4,\ \lambda=0.2, 0.5,\ \alpha=0.4, 1.8$).}
\label{tab11}
\begin{tabular}{cccccccccc}
\toprule
\multirow{2}{*}{$h$} &
\multicolumn{2}{c}{$\lambda=0.2,\ \alpha=0.4$} &
\multicolumn{2}{c}{$\lambda=0.2,\ \alpha=1.8$} &
\multicolumn{2}{c}{$\lambda=0.5,\ \alpha=0.4$}&
\multicolumn{2}{c}{$\lambda=0.5,\ \alpha=1.8$}\\
\cmidrule(lr){2-3} \cmidrule(lr){4-5} \cmidrule(lr){6-7} \cmidrule(lr){8-9}
 & $e_{l^{2}}(h)$ & rate & $e_{l^{2}}(h)$ & rate & $e_{l^{2}}(h)$ & rate & $e_{l^{2}}(h)$ & rate \\
\midrule
$2^{-5}$ &    1.18e-02  &       *  &     8.69e-04 &     *  &   2.00e-02     &    *     &       9.19e-04   &  * \\
$2^{-6}$ &     6.85e-03        &   0.79    &       4.38e-04  &  0.99 &     1.13e-02    &  0.83  &   4.65e-04&     0.98  \\
$2^{-7}$ &      3.96e-03         &   0.79    &       2.20e-04   &    0.99&      6.33e-03  &    0.83  &    2.34e-04  &    0.99     \\
$2^{-8}$ &      2.30e-03      &    0.78    &      1.11e-04  &   0.99  &      3.56e-03   &   0.83  &   1.18e-04  &  0.99     \\
\bottomrule
\end{tabular}
\end{table}
\begin{figure}[H]
\centering
\includegraphics[width=0.4\linewidth]{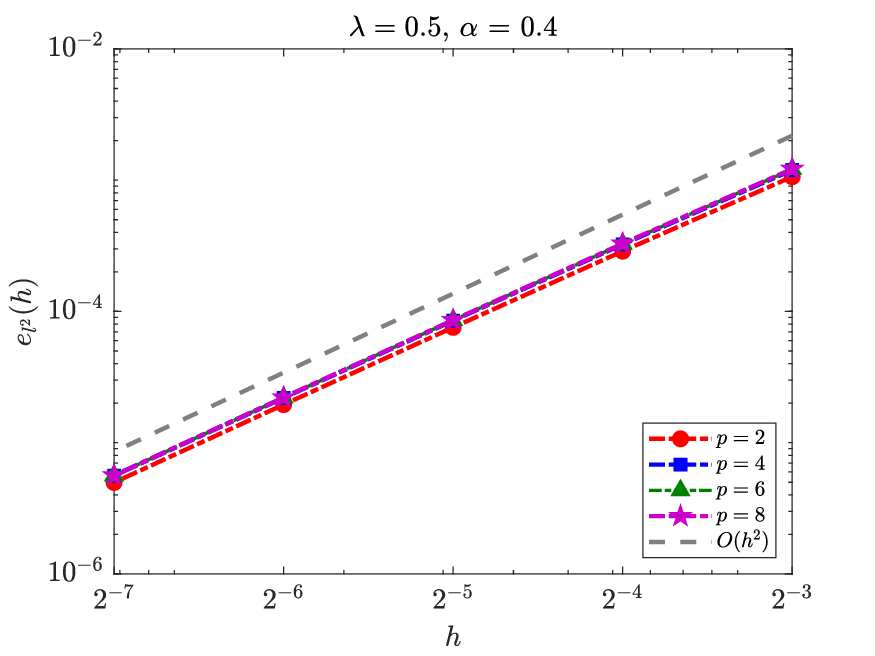}
\includegraphics[width=0.4\linewidth]{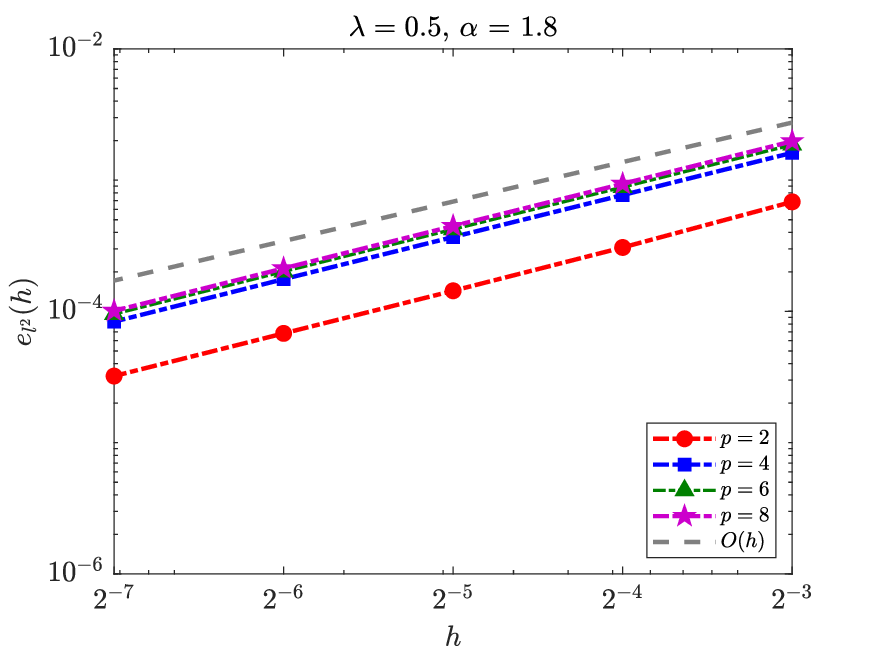}
\caption{Numerical accuracy of $e_{l^{2}}(h)$ in \cref{exa3} ($p =4, 6, 8,\ \lambda=0.5,\ \alpha = 0.4, 1.8$).}
\label{fig:7}
\end{figure}
\section{Conclusions}
In this work, we present a general framework of HFD schemes for the TFL, a pseudo-differential operator defined via the symbol $S^{\alpha}_{\lambda}(\xi)$ in \eqref{eq:4}. Based on the discrete symbol of the semi-discrete Fourier transform, we constructed new generating functions to obtain the TFL coefficients. The stability and convergence of the HFD methods for the TFL equations were analyzed, and the effectiveness of the proposed schemes was further verified through numerical examples.

The $l^{\infty}$-norm error of the HFD schemes for the TFL is of order $O\left(h^{\min\{s-\alpha,p\}}\right)$, with $s$ being the regularity of the exact solution and $p$ the order of the scheme.
 For the TFL equations, the HFD methods yield a convergence rate of $O\left(h^{\min\{s,p\}}\right)$ in the $l^{2}$-norm error for the high-regularity solutions, whereas for low-regularity ones, the convergence rate deteriorates to $O\left(h^{\min\{1,(\alpha+1)/{2}\}}\right)$. Additionally, our methods can also be applied to time-dependent problems and the approximation of anisotropic operators.
However, this work does not address the cases $\alpha=1$ and $\alpha=2$, nor does it provide complete theoretical estimates for the low-regularity functions, which will be the focuses of our future research.

\bibliographystyle{alpha}
\bibliography{TFD}

\newcommand{\etalchar}[1]{$^{#1}$}
\begin{thebibliography}{CGMY03}

\bibitem[AB17]{AB2017}
G.~Acosta and J.~Borthagaray.
\newblock A fractional {Laplace} equation: regularity of solutions and finite
  element approximations.
\newblock {\em SIAM J. Numer. Anal.}, 55:472--495, 2017.

\bibitem[ABB17]{ABB2017}
G.~Acosta, F.~Bersetche, and J.~Borthagaray.
\newblock A short {FE} implementation for a 2d homogeneous {Dirichlet} problem
  of a fractional {Laplacian}.
\newblock {\em Comput. Math. Appl.}, 74:784--816, 2017.

\bibitem[App09]{A2009}
D.~Applebaum.
\newblock {\em L{\'e}vy processes and stochastic calculus}.
\newblock Cambridge university press, 2009.

\bibitem[BM10]{BM2010}
B.~Baeumer and M.~Meerschaert.
\newblock Tempered stable l{\'e}vy motion and transient super-diffusion.
\newblock {\em J. Comput. Appl. Math.}, 233:2438--2448, 2010.

\bibitem[CdCN07]{CD2007}
{\'A}.~Cartea and D.~del Castillo-Negrete.
\newblock Fluid limit of the continuous-time random walk with general l{\'e}vy
  jump distribution functions.
\newblock {\em Phys. Rev. E}, 76:041105, 2007.

\bibitem[CDH25]{CDH2025}
X.~Cui, R.~Du, and Z.~Hao.
\newblock A simple and efficient finite difference method for the tempered
  nonlocal {Laplacian}.
\newblock {\em J. Sci. Comput.}, 103:7, 2025.

\bibitem[CGMY02]{CGMY2002}
P.~Carr, H.~Geman, D.~Madan, and M.~Yor.
\newblock The fine structure of asset returns: An empirical investigation.
\newblock {\em J. Bus.}, 75:305--332, 2002.

\bibitem[CGMY03]{CGMY2003}
P.~Carr, H.~Geman, D.~Madan, and M.~Yor.
\newblock Stochastic volatility for l{\'e}vy processes.
\newblock {\em Math. Finance}, 13:345--382, 2003.

\bibitem[CM11]{CM2011}
A.~Chakrabarty and M.~Meerschaert.
\newblock Tempered stable laws as random walk limits.
\newblock {\em Stat. Probab. Lett.}, 81:989--997, 2011.

\bibitem[DG13]{DG2013}
M.~D'Elia and M.~Gunzburger.
\newblock The fractional {Laplacian} operator on bounded domains as a special
  case of the nonlocal diffusion operator.
\newblock {\em Comput. Math. Appl.}, 66:1245--1260, 2013.

\bibitem[DL98]{DL1998}
B.~Dubrulle and J.~Laval.
\newblock Truncated l{\'e}vy laws and 2d turbulence.
\newblock {\em Eur. Phys. J. B}, 4:143--146, 1998.

\bibitem[DLTZ18]{DLTZ2017}
W.~Deng, B.~Li, W.~Tian, and P.~Zhang.
\newblock Boundary problems for the fractional and tempered fractional
  operators.
\newblock {\em Multiscale Model. Simul.}, 16:125--149, 2018.

\bibitem[DWZ20]{DWZ2018}
W.~Deng, X.~Wang, and P.~Zhang.
\newblock Anisotropic nonlocal diffusion operators for normal and anomalous
  dynamics.
\newblock {\em Multiscale Model. Simul.}, 18:415--443, 2020.

\bibitem[DZ19a]{DZ2019}
S.~Duo and Y.~Zhang.
\newblock Accurate numerical methods for two and three dimensional integral
  fractional {Laplacian} with applications.
\newblock {\em Comput. Methods Appl. Mech. Eng.}, 355:639--662, 2019.

\bibitem[DZ19b]{DZ2019JSC}
S.~Duo and Y.~Zhang.
\newblock Numerical approximations for the tempered fractional {Laplacian}:
  error analysis and applications.
\newblock {\em J. Sci. Comput.}, 81:569--593, 2019.

\bibitem[HO14]{HO2014}
Y.~Huang and A.~Oberman.
\newblock Numerical methods for the fractional {Laplacian}: A finite
  difference-quadrature approach.
\newblock {\em SIAM J. Numer. Anal.}, 52:3056--3084, 2014.

\bibitem[HO16]{HO2016}
Y.~Huang and A.~Oberman.
\newblock Finite difference methods for fractional {Laplacians}.
\newblock {\em arXiv preprint arXiv:1611.00164}, 2016.

\bibitem[HW22]{HW2022}
R.~Han and S.~Wu.
\newblock A monotone discretization for integral fractional {Laplacian} on
  bounded {Lipschitz} domains: pointwise error estimates under {H{\"o}lder}
  regularity.
\newblock {\em SIAM J. Numer. Anal.}, 60:3052--3077, 2022.

\bibitem[HZD21]{HZD2021}
Z.~Hao, Z.~Zhang, and R.~Du.
\newblock Fractional centered difference scheme for high-dimensional integral
  fractional {Laplacian}.
\newblock {\em J. Comput. Phys.}, 424:109851, 2021.

\bibitem[JHM{\etalchar{+}}13]{JHMJM2013}
M.~Javanainen, H.~Hammaren, L.~Monticelli, J.~Jeon, M.~Miettinen,
  H.~Martinez-Seara, R.~Metzler, and I.~Vattulainen.
\newblock Anomalous and normal diffusion of proteins and lipids in crowded
  lipid membranes.
\newblock {\em Faraday discussions}, 161:397--417, 2013.

\bibitem[Kwa17]{K2017}
M.~Kwa{\'s}nicki.
\newblock Ten equivalent definitions of the fractional {Laplace} operator.
\newblock {\em Fract. Calc. Appl. Anal.}, 20:7--51, 2017.

\bibitem[MS17]{MS2017}
Z.~Mao and J.~Shen.
\newblock Hermite spectral methods for fractional {PDEs} in unbounded domains.
\newblock {\em SIAM J. Sci. Comput.}, 39:A1928--A1950, 2017.

\bibitem[MY20]{MY2020}
V.~Minden and L.~Ying.
\newblock A simple solver for the fractional {Laplacian} in multiple
  dimensions.
\newblock {\em SIAM J. Sci. Comput.}, 42:A878--A900, 2020.

\bibitem[MZB08]{MZB2008}
M.~Meerschaert, Y.~Zhang, and B.~Baeumer.
\newblock Tempered anomalous diffusion in heterogeneous systems.
\newblock {\em Geophys. Res. Lett.}, 35, 2008.

\bibitem[Poz18]{P2016}
C.~Pozrikidis.
\newblock {\em The fractional {Laplacian}}.
\newblock Chapman and Hall/CRC, 2018.

\bibitem[SMC15]{SMC2015}
F.~Sabzikar, M.~Meerschaert, and J.~Chen.
\newblock Tempered fractional calculus.
\newblock {\em J. Comput. Phys.}, 293:14--28, 2015.

\bibitem[SWCL24]{SL2024}
C.~Sheng, L.~Wang, H.~Chen, and H.~Li.
\newblock Fast implementation of {FEM} for integral fractional {Laplacian} on
  rectangular meshes.
\newblock {\em Commun. Comput. Phys.}, 36:673--710, 2024.

\bibitem[Tre00]{T2000}
L.~Trefethen.
\newblock {\em Spectral methods in {MATLAB}}.
\newblock SIAM, 2000.

\bibitem[TWYZ20]{TWYZ2020}
T.~Tang, L.~Wang, H.~Yuan, and T.~Zhou.
\newblock Rational spectral methods for {PDEs} involving fractional {Laplacian}
  in unbounded domains.
\newblock {\em SIAM J. Sci. Comput.}, 42:A585--A611, 2020.

\bibitem[YCYL25]{YCYL2025}
H.~Yang, H.~Chen, X.~Yue, and G.~Long.
\newblock High-order fractional central difference method for multi-dimensional
  integral fractional {Laplacian} and its applications.
\newblock {\em Commun. Nonlinear Sci. Numer. Simul.}, 145:108711, 2025.

\bibitem[YDN20]{YDN2020}
Y.~Yan, W.~Deng, and D.~Nie.
\newblock A finite-difference approximation for the one-and two-dimensional
  tempered fractional {Laplacian}.
\newblock {\em Commun. Appl. Math. Comput.}, 2:129--145, 2020.

\bibitem[ZDF19]{ZDF2018}
Z.~Zhang, W.~Deng, and H.~Fan.
\newblock Finite difference schemes for the tempered fractional {Laplacian}.
\newblock {\em Numer. Math. Theor. Meth. Appl.}, 12:492--516, 2019.

\bibitem[ZDK18]{ZDK2018}
Z.~Zhang, W.~Deng, and G.~Karniadakis.
\newblock A riesz basis {Galerkin} method for the tempered fractional
  {Laplacian}.
\newblock {\em SIAM J. Numer. Anal.}, 56:3010--3039, 2018.

\bibitem[ZMP12]{ZMP2012}
Y.~Zhang, M.~Meerschaert, and A.~Packman.
\newblock Linking fluvial bed sediment transport across scales.
\newblock {\em Geophys. Res. Lett.}, 39, 2012.

\bibitem[ZZ24]{ZZ2024}
S.~Zhou and Y.~Zhang.
\newblock A novel and simple spectral method for nonlocal {PDEs} with the
  fractional {Laplacian}.
\newblock {\em Comput. Math. Appl.}, 168:133--147, 2024.

\end{thebibliography}
\end{document}